\DeclareMathOperator{\image}{''}
\DeclareMathOperator{\acc}{acc}
\DeclareMathOperator{\dom}{dom}
\DeclareMathOperator{\otp}{otp}
\DeclareMathOperator{\supp}{supp}
\DeclareMathOperator{\len}{len}
\DeclareMathOperator{\crit}{crit}
\DeclareMathOperator{\im}{im}
\DeclareMathOperator{\cf}{cf}
\DeclareMathOperator{\stem}{stem}
\DeclareMathOperator{\Add}{Add}
\DeclareMathOperator{\Col}{Col}
\DeclareMathOperator{\Refl}{Refl}
\DeclareMathOperator{\power}{\mathcal{P}}
\newcommand{\ZFC}{{\rm ZFC}}
\newcommand{\GCH}{{\rm GCH}}
\newcommand{\Ord}{{\rm Ord}}
\newtheorem{theorem}{Theorem}
\newtheorem{fact}[theorem]{Fact}
\newtheorem{remark}[theorem]{Remark}
\newtheorem{lemma}[theorem]{Lemma}
\newtheorem{claim}[theorem]{Claim}
\theoremstyle{definition}
\newtheorem{definition}[theorem]{Definition}
\newtheorem{question}{Question}
\newtheorem*{theorem*}{Theorem}
\title{Stationary Reflection}
\author{Yair Hayut}
\email[Yair Hayut]{yair.hayut@mail.tau.ac.il}
\address[Yair Hayut]{School of Mathematical Sciences.
Tel Aviv University.
Tel Aviv 69978,
Israel}
\author{Spencer Unger}
\email[Spencer Unger]{spencer.unger@mail.tau.ac.il}
\address[Spencer Unger]{School of Mathematical Sciences.
Tel Aviv University.
Tel Aviv 69978,
Israel}
\begin{document}
\begin{abstract}
We improve the upper bound for the consistency strength of stationary reflection at successors of singular cardinals.
\end{abstract}
\maketitle
\section{Introduction}

Stationary reflection is an important notion in the investigation of
compactness principles in set theory.  Its failure, the existence of a
nonreflecting stationary set, is sufficient for the construction of objects which
witness the noncompactness of various properties.  Examples include freeness of
abelian groups and metrizability of topological spaces \cite{MagidorShelah1994}
and chromatic number of graphs \cite{Todorcevic1983,Shelah2013}.

We recall the basic definitions:

\begin{definition}
Let $\kappa$ be a regular cardinal. A set $S \subseteq \kappa$ \emph{reflects at $\alpha$} if $S \cap \alpha$ is stationary at $\alpha$, where $\cf \alpha > \omega$.  We say that a stationary set $S \subseteq \kappa$ \emph{reflects} if it reflects at $\alpha$ for some $\alpha < \kappa$.
\end{definition}

\begin{definition}
For a stationary set $S\subseteq \kappa$, we denote by $\Refl(S)$ the assertion: $\forall T \subseteq S$ stationary, $T$ reflects.
\end{definition}

The main theorem of this paper deals with the consistency strength of stationary
reflection at $\aleph_{\omega + 1}$.  Until our work the best known upper bound
is due to Magidor \cite{Magidor1982}.

\begin{theorem}[Magidor]
$\Refl(\aleph_{\omega+1})$ is consistent relative to the existence of $\omega$-many supercompact cardinals.
\end{theorem}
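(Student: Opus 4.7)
\emph{Proof plan.} The strategy is to collapse the supercompact cardinals $\kappa_0<\kappa_1<\cdots$ (with supremum $\kappa$) so that $\kappa_n$ becomes $\aleph_{n+1}$, while retaining enough residual supercompactness of each $\kappa_n$ in the generic extension to witness reflection at $\kappa^+=\aleph_{\omega+1}$. First apply a Laver-style preparation making each $\kappa_n$ simultaneously indestructible under $\kappa_n$-directed-closed forcing. Then force with the Levy-collapse product $\mathbb{P}=\prod_{n<\omega}\Col(\kappa_{n-1}^+,{<}\kappa_n)$, where $\kappa_{-1}=\omega$. Standard chain-condition and closure arguments yield $\kappa_n^{V[G]}=\aleph_{n+1}$, so that $\aleph_{\omega+1}^{V[G]}=\kappa^+$.

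The central preservation step is to show that each $\kappa_n$ carries enough supercompactness embeddings in $V[G]$. Fix $n$ and let $j\colon V\to M$ witness $\kappa^+$-supercompactness of $\kappa_n$ with $M^{\kappa^+}\subseteq M$. Factor $\mathbb{P}\cong\mathbb{P}_{<n}\times\mathbb{P}_{\geq n}$: the bottom factor has size ${<}\kappa_n=\crit(j)$ so it lifts trivially, while the top factor is $\kappa_n$-directed-closed, so by indestructibility one builds, in $V[G]$, a generic $H$ for $j(\mathbb{P}_{\geq n})$ over $M$ below a master condition, yielding a lift $j\colon V[G]\to M[H]$.

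For the reflection itself, let $T\subseteq\aleph_{\omega+1}$ be stationary in $V[G]$, and choose $n$ so that $T_n:=T\cap\{\alpha:\cf^{V[G]}(\alpha)\leq\aleph_n\}$ is stationary. Apply a lifted $j$ with $\crit(j)=\kappa_{n+2}$. For each $\alpha\in T_n$, $\cf(\alpha)<\crit(j)$ implies $j(\alpha)=\sup(j''\alpha)$. Setting $\beta:=\sup(j''\aleph_{\omega+1})$, the ordinal $\beta$ has $M[H]$-cofinality $\aleph_{\omega+1}^{V[G]}>\omega$, and a standard pullback argument shows that $j(T)\cap\beta$ is stationary in $\beta$ inside $M[H]$; by elementarity of $j$, $T$ reflects in $V[G]$ at some $\gamma<\aleph_{\omega+1}$ of uncountable cofinality.

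The two principal technical obstacles are (i) carrying out the simultaneous Laver preparation without destroying the supercompactness of later $\kappa_m$'s, which requires a careful master-condition construction in the preparation iteration; and (ii) managing the cofinality bookkeeping in the reflection step: the restriction to cofinalities $\leq\aleph_n<\crit(j)$ is essential because otherwise $j$ fails to commute with the suprema taken along $T_n$, and the pullback of a would-be disjoint club from $M[H]$ to $V[G]$ breaks down.
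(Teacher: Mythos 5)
Note first that the paper does not prove this statement at all: it is quoted as background and attributed to \cite{Magidor1982}, so your sketch has to be judged against Magidor's argument rather than anything in this paper. Your overall architecture is the standard and correct one: Laver-prepare the $\kappa_n$, force with a full-support product of Levy collapses making $\kappa_n=\aleph_{n+1}$, and for a stationary $T$ concentrating on cofinalities $\leq\aleph_n$ reflect at $\beta=\sup j\image\aleph_{\omega+1}$ for a lifted embedding with critical point above $\aleph_n$, pulling the reflection back by elementarity.

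However, the central lifting step as you state it has a genuine gap. The factor $\mathbb{P}_{\geq n}$ is \emph{not} $\kappa_n$-directed-closed: its first coordinate $\Col(\kappa_{n-1}^{+},<\kappa_n)$ is only $\kappa_{n-1}^{+}$-closed, so Laver indestructibility of $\kappa_n$ says nothing about forcing with it (indeed this factor collapses $\kappa_n$). The correct decomposition has three pieces: the part below $\crit(j)$ (small, fine), the collapse $\Col(\kappa_{n-1}^{+},<\kappa_n)$ \emph{at} the critical point, and the $\kappa_n^{+}$-directed-closed tail. Indestructibility or a master condition handles the tail, but the collapse at the critical point must be absorbed into $j(\Col(\kappa_{n-1}^{+},<\kappa_n))\cong\Col(\kappa_{n-1}^{+},<j(\kappa_n))$, and producing an $M$-generic for the interval collapse $\Col(\kappa_{n-1}^{+},[\kappa_n,j(\kappa_n)))$ \emph{inside} $V[G]$ is exactly the delicate point: a naive enumeration of dense sets fails, since that forcing is only $\kappa_{n-1}^{+}$-closed while $M$ has roughly $|j(\kappa_n)|$-many dense sets to meet, and with only $\kappa^{+}$-supercompact embeddings there is no room. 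The two standard repairs are (i) obtain the lift in a further generic extension and add a preservation argument --- the quotient is sufficiently closed, so a stationary $T$ consisting of points of cofinality $\leq\aleph_n$ remains stationary there, $j(T)\cap\beta$ is stationary in the target model, and elementarity transfers ``$T$ reflects'' back to $V[G]$ --- or (ii) a transfer/guiding-generic absorption argument in the style of Magidor's original proof. Your appeal to ``indestructibility'' does not supply $H\in V[G]$, and without one of these repairs the reflection step does not go through. A smaller point: since full supercompactness is the hypothesis, use $\lambda$-supercompact embeddings for $\lambda$ well above $\kappa^{+}$ (e.g.\ above $|j(\kappa_{n+2})|$); insisting on $\kappa^{+}$-supercompactness makes the master-condition and closure bookkeeping needlessly tight.
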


We prove:
\begin{theorem}
$\Refl(\aleph_{\omega+1})$ is consistent relative to the existence of a cardinal
$\kappa$ which is $\kappa^+$-$\Pi^1_1$-subcompact. \end{theorem}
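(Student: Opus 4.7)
The plan is to replace Magidor's $\omega$-many supercompacts with a single $\kappa^+$-$\Pi^1_1$-subcompact cardinal $\kappa$, and, after a preparation, to use a Prikry-type forcing with interleaved Levy collapses to turn $\kappa$ into $\aleph_\omega$ while preserving enough subcompactness that $\Refl(\kappa^+) = \Refl(\aleph_{\omega+1})$ holds in the extension.

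The driving observation is that ``$S$ is stationary in $\kappa^+$'' is naturally $\Pi^1_1$ over the structure $(H_{\kappa^+}, \in, S)$: universally over second-order predicates coding a club $C$, $S \cap C \neq \emptyset$. Hence $\kappa^+$-$\Pi^1_1$-subcompactness of $\kappa$ already yields $\Refl(\kappa^+)$ in $V$: given stationary $S \subseteq \kappa^+$, subcompactness supplies $\bar\kappa < \kappa$, a stationary $\bar S \subseteq \bar\kappa^+$, and an elementary embedding $j : (H_{\bar\kappa^+}, \in, \bar S) \to (H_{\kappa^+}, \in, S)$ with $\crit(j) = \bar\kappa$ and $j(\bar\kappa) = \kappa$; then $S$ reflects at $\alpha := \sup j[\bar\kappa^+]$, which has cofinality $\bar\kappa^+ > \omega$. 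The remaining task is to preserve this reflection mechanism through the forcing that makes $\kappa = \aleph_\omega$.

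Accordingly, I would first run a Laver-style preparation $\mathbb{P}_\kappa$ of length $\kappa$ designed to make $\kappa^+$-$\Pi^1_1$-subcompactness suitably indestructible and to arrange $\GCH$-like arithmetic below $\kappa$. I would then force with a Prikry-type poset $\mathbb{Q}$ (e.g.\ supercompact Prikry with interleaved Levy collapses, in the style of Gitik-Magidor) so that the Prikry sequence $\langle \kappa_n : n < \omega \rangle$ is cofinal in $\kappa$, each $\kappa_n$ becomes an $\aleph_n$, the Prikry property holds, and $\kappa^+$-c.c.\ preserves $\kappa^+$. After $\mathbb{P}_\kappa * \mathbb{Q}$, one has $\kappa = \aleph_\omega$ and $\kappa^+ = \aleph_{\omega+1}$.

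To verify $\Refl(\aleph_{\omega+1})$ in the extension, given a name $\dot S$ forced to be a stationary subset of $\kappa^+$, I would use the preserved subcompactness and a master-condition argument to lift an embedding $j$ as above through the portion of the forcing below a candidate $\bar\kappa$, using the current Prikry sequence above $\bar\kappa$ to guide the lift and the Prikry property to control the quotient. The main obstacles are: (i) proving indestructibility of $\kappa^+$-$\Pi^1_1$-subcompactness through the preparation and the Prikry quotient---the analogue of Laver's theorem in this weaker large-cardinal regime; (ii) designing $\mathbb{Q}$ so that the Prikry property, appropriate closure, and the correct cardinal structure cohere; and (iii) when $\dot S$ concentrates on points of small cofinality in $\aleph_\omega$, choosing $\bar\kappa$ large enough that $\sup j[\bar\kappa^+]$ retains uncountable cofinality in the extension, which is needed for reflection to make sense.
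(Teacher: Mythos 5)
Your proposal has a genuine gap at its center: it never addresses the nonreflecting stationary set that Prikry forcing necessarily creates. From a $\kappa^+$-$\Pi^1_1$-subcompact $\kappa$ you only have measurability (under $\GCH$ this hypothesis is weaker than $\kappa^+$-supercompactness), so the available forcing is Prikry forcing from a normal measure with interleaved collapses, not a Gitik--Magidor supercompact Prikry forcing on $\power_\kappa(\kappa^+)$ as you suggest. But after normal-measure Prikry forcing the set $S_0=(S^{\kappa^{+}}_{\kappa})^V$ of points of ground-model cofinality $\kappa$ is a stationary set of countable-cofinality ordinals that does not reflect: any $\alpha<\kappa^+$ of uncountable cofinality in the extension has ground-model cofinality $<\kappa$ and hence carries a club of order type $<\kappa$ missing $S_0$. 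This is exactly the Cummings--Foreman--Magidor/Faubion situation, which gives $\Refl(\aleph_{\omega+1}\setminus S_0)$ at best; no lifting of subcompactness embeddings can make $S_0$ reflect, since it simply does not reflect in that model. The paper's essential move, absent from your plan, is to force further with $\mathbb{Q}$, the forcing shooting a club through $(S^{\kappa^{+}}_{<\kappa})^V$ over $V[\dot P]$, and to analyze $\mathbb{P}\ast\mathbb{H}/\dot{P}$ via the iterated ultrapower $M_\omega$: along the critical sequence, $j_\omega(\mathbb{Q})^P$ has a dense subset isomorphic to $\Add(\kappa^{+},1)^V$, and a Bukovsk\'y--Dehornoy style analysis ($M_\omega[P][\mathcal H]=\bigcap_n M_n[\mathcal H_n]$) together with approachability below $\kappa$ transfers reflection from the models $M_n[\mathcal H_n]$ to the final model.

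This also shows that your obstacle (i) is the wrong target: one does not need a Laver-style theorem making $\Pi^1_1$-subcompactness indestructible under the whole preparation-plus-Prikry iteration, nor does one need subcompactness to survive into the final model at all. What is actually needed, and what the paper proves by a routine master-condition argument after an Easton iteration of $\Add(\alpha^+,1)$, is only that simultaneous reflection for stationary subsets of $S^{\kappa^+}_{<\kappa}$ is indestructible under the single forcing $\Add(\kappa^{+},1)$ --- precisely because $\Add(\kappa^+,1)$ is what $j_\omega(\mathbb{Q})^P$ looks like from $V$. Your verification scheme (lifting $j\colon H(\rho^+)\to H(\kappa^+)$ through the forcing below $\rho$ and controlling the quotient by the Prikry property, with $\sup j\image\rho^+$ as reflection point) founders on your own obstacle (iii) --- $\rho^+$ is collapsed unless $\rho$ sits on the Prikry sequence, and the embedding cannot see the full Prikry/collapse generic at $\kappa$ --- and in any case cannot substitute for destroying the stationarity of $S_0$. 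So while your opening observation (that $\Pi^1_1$-subcompactness yields reflection, indeed simultaneous reflection, in $V$) matches the paper, the proposal as written does not reach full $\Refl(\aleph_{\omega+1})$.
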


Subcompact cardinals were defined by Jensen, and $\kappa^+$-$\Pi^1_1$-subcompact
cardinals were defined by Neeman and Steel (denoted $\Pi^2_1$-subcompact in
\cite{NeemanSteelSubcompact}).  Under $\GCH$, the large cardinal assumption in
our theorem is weaker than the assumption that $\kappa$ is
$\kappa^+$-supercompact. Subcompactness and its variations were defined during
the investigation of square principles in core models. See Section \ref{section:
subcompact cardinals} for the exact definitions, and more details.

Our construction is motivated by an analogy with the consistency of stationary
reflection at $\aleph_2$. Reflection of stationary sets is an instance of
reflection of a $\Pi^1_1$-statement. Namely, if $S$ is a subset of $\kappa$,
then the statement ``$S$ is stationary'' is a $\Pi^1_1$-statement in the model
$\langle H(\kappa), \in, S\rangle$.  If $\kappa$ is weakly compact, then this
$\Pi^1_1$-statement will reflect to a smaller ordinal $\alpha$. So $S \cap
\alpha$ is stationary and $S$ reflects.

Thus, it was natural to suspect that the consistency strength of
$\Refl(S^{\omega_2}_{\omega})$ is weakly compact. Baumgartner \cite{Baumgartner} showed that after collapsing a weakly compact to be
$\aleph_2$, $\Refl(S^{\omega_2}_{\omega})$ holds and even any collection of
$\aleph_1$ stationary subsets of $S^{\omega_2}_{\omega}$ will reflect at a
common point. This thesis was supported by a result of Jensen that stationary
reflection is possible in $L$ only at weakly compact cardinals. Moreover,
Magidor \cite{Magidor1982} showed that if any two stationary subsets of
$S^{\omega_2}_{\omega}$ have a common reflection point then $\omega_2$ is weakly
compact in $L$.

Surprisingly, in \cite{HarringtonShelah1985}, Shelah and Harrington proved that
the consistency strength of $\Refl(S^{\omega_2}_{\omega})$ is only a Mahlo
cardinal.  An important part of their result is the idea that one must iterate
to destroy the stationarity of certain ``bad" sets to achieve stationary
reflection.  These results show that there is a gap in the consistency strength
between stationary reflection and simultaneous reflection for collections of
stationary sets. This gap is explained by the difference between Jensen's square
$\square_\kappa$ and Todor\v{c}evi\'{c}'s square $\square(\kappa^{+})$. See
\cite{HayutLambieHanson2016} for more details.

In our work, we exploit the strong analogy between weak compactness and
$\Pi_1^1$-subcompactness in order to get the consistency of stationary reflection
at $\aleph_{\omega+1}$.  Our argument is similar to Baumgartner's in the sense
that we do not need to iterate to destroy bad stationary sets.  This analogy
suggests that our assumption is not quite optimal.

There is a vast gap between the strength in the large cardinal axioms which are
needed for stationary reflection at $\aleph_2$ and at $\aleph_{\omega+1}$. This gap is
related to the problem of controlling the successor of a singular cardinal.  The
Weak Covering Lemma \cite{JensenSteel2013} states that if there is no transitive
model with a Woodin cardinal then there is a definable class $K$ which is
generically absolute and for every $\kappa$ which is a strong limit singular
cardinal in $V$, $(\kappa^{+})^V = (\kappa^{+})^K$. In $K$, $\square_\kappa$
holds for all infinite $\kappa$ by a result of Schimmerling and Zeman
\cite{SchimmerlingZeman2001}. Since $\square_\kappa$ is upwards absolute between
models that agree on $\kappa^{+}$, we conclude that if there is no inner model
with a Woodin cardinal, then $\square_\kappa$ holds at every successor of a
singular cardinal and therefore stationary reflection fails by a standard
argument.

Thus, in order to obtain stationary reflection principles at a successor of a
singular cardinal, one needs either to violate weak covering or to start with a
model in which square principles fail, and this requires large cardinal axioms
which are much stronger than the ones which are required for the treatment of successor of regular cardinals.

The paper is organized as follows. In Section \ref{section: Prikry} we prove
some standard facts about Prikry forcing with collapses.  In Section
\ref{section: subcompact cardinals}, we give the definitions of subcompact and
$\Pi_1^1$-subcompact and calibrate the extent to which they imply stationary
reflection.  In Section \ref{section: stationary reflection}, we prove our main
theorem.

\section{Prikry forcing} \label{section: Prikry}
In this section we will review some facts about Prikry forcing which are useful
in this paper. We refer the reader to \cite{Gitik2010} for the proofs of the
facts cited in this section.  For this section we assume that $\kappa$ is a
measurable cardinal and $2^\kappa=\kappa^+$.  Let $\mathcal{U}$ be a normal
ultrafilter on $\kappa$ and $j:V\to M$ be the ultrapower embedding.

\begin{fact}  We have the following:
\begin{enumerate}
\item If $a = \langle \alpha_\xi \mid \xi < \kappa\rangle \in V \cap \,^\kappa M$ then $a\in M$.
\item $|\power^M(j(\kappa))|^V = |j(\kappa^{+})|^V = \kappa^{+}$.
\item\label{item: existence of M-generic} If $\mathbb{P}\in M$ such that \[M\models \mathbb{P}\text{ is }j(\kappa^{+})\text{-cc, }\kappa^{+}\text{-closed forcing notion, } |\mathbb{P}|\leq j(\kappa^{+})\]
then there is $K \in V$ which is an $M$-generic filter for $\mathbb{P}$.
\end{enumerate}
\end{fact}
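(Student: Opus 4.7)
Given $a = \langle \alpha_\xi : \xi < \kappa\rangle \in V$ with values in $M$, I use choice in $V$ to fix representing functions $f_\xi : \kappa \to V$ with $[f_\xi]_{\mathcal{U}} = \alpha_\xi$ for each $\xi < \kappa$. I then set $g : \kappa \to V$ by $g(\alpha) = \langle f_\xi(\alpha) : \xi < \alpha\rangle$ and verify by \L{}o\'s's theorem that $[g]_{\mathcal{U}} \in M$ is a sequence whose domain is $[\mathrm{id}]_{\mathcal{U}} = \kappa$ (by normality of $\mathcal{U}$) and whose $\xi$-th entry equals $[\alpha \mapsto f_\xi(\alpha)]_{\mathcal{U}} = [f_\xi]_{\mathcal{U}} = \alpha_\xi$; hence $a = [g]_{\mathcal{U}} \in M$.

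\textbf{Plan for item (2).} Every element of $j(\kappa^+)$ is represented by some $f : \kappa \to \kappa^+$, and the standing assumption $2^\kappa = \kappa^+$ yields only $(\kappa^+)^\kappa = \kappa^+$ such functions in $V$, so $|j(\kappa^+)|^V \leq \kappa^+$; the reverse inequality is immediate from $\kappa^+ \leq j(\kappa^+)$ in the ordinals. Applying elementarity of $j$ to $|\power(\kappa)| = \kappa^+$ gives $M \models |\power(j(\kappa))| = j(\kappa^+)$, and any such $M$-bijection is a bijection in $V$, so $|\power^M(j(\kappa))|^V = |j(\kappa^+)|^V = \kappa^+$.

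\textbf{Plan for item (3).} I construct $K$ by a $\kappa^+$-step diagonalization in $V$ that meets every maximal antichain of $\mathbb{P}$ lying in $M$. The count: by $j(\kappa^+)$-cc and $|\mathbb{P}|^M \leq j(\kappa^+)$, every maximal antichain of $\mathbb{P}$ can be identified (via an $M$-bijection) with a subset of $j(\kappa^+)$ of $M$-cardinality $<j(\kappa^+) = j(\kappa)^{+M}$; applying $j$ to $(\kappa^+)^{\leq \kappa} = \kappa^+$ shows that in $M$ there are at most $j(\kappa^+)$ such subsets, hence by item (2) at most $\kappa^+$ in $V$. Enumerate these antichains as $\langle A_\alpha : \alpha < \kappa^+\rangle$ in $V$ and recursively choose a descending $\langle p_\alpha : \alpha < \kappa^+\rangle \subseteq \mathbb{P}$ with $p_{\alpha+1} \leq p_\alpha$ meeting $A_\alpha$. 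The step I expect to be the main obstacle is the limit step: at a limit $\alpha < \kappa^+$ the partial sequence is produced in $V$, whereas the $\kappa^+$-closure of $\mathbb{P}$ is only guaranteed inside $M$. I resolve this by passing to a cofinal subsequence of $V$-length $\cf^V(\alpha) \leq \kappa$, which belongs to $M$ by item (1) (or by the trivial $<\kappa$-closure of $M$ when $\cf^V(\alpha) < \kappa$), and then applying $\kappa^+$-closure in $M$ to obtain a lower bound $p_\alpha \in M$. The upward closure of $\{p_\alpha : \alpha < \kappa^+\}$ is the desired $M$-generic filter $K \in V$.
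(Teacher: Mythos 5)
Your proposal is correct: item (1) is the standard closure-under-$\kappa$-sequences argument via a diagonal representing function, item (2) follows from counting representing functions together with $2^\kappa=\kappa^+$ and the absoluteness of an $M$-bijection, and item (3) is the usual counting of the at most $\kappa^+$ (in $V$) maximal antichains of $\mathbb{P}$ lying in $M$ followed by a $\kappa^+$-step descending construction, with the limit stages correctly handled by pulling a cofinal subsequence of length $\leq\kappa$ into $M$ and using the closure of $\mathbb{P}$ inside $M$. The paper itself gives no proof of this Fact and defers to Gitik's handbook chapter, where essentially this same argument appears, so your route matches the intended one.
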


Using part \ref{item: existence of M-generic}, let $K\subseteq \Col^M(\kappa^{+}, <j(\kappa))$ be an
$M$-generic filter. We define a forcing $\mathbb{P}$ called Prikry forcing over
the measure $\mathcal{U}$ with interleaved collapses using the guiding generic
$K$.

\begin{definition}
Let $\mathbb{P}$ be the following forcing notion with
\[p = \langle c_{-1}, \rho_0, c_0, \rho_1, c_1, \dots, \rho_{n-1}, c_{n-1}, A, C\rangle \in \mathbb{P}\]
if and only if
\begin{enumerate}
\item $0 \leq n < \omega$. $n$ is called the \emph{length} of $p$, and we write $\len p = n$.
\item $\rho_0 < \rho_1 < \dots < \rho_{n-1} < \kappa$ are called the \emph{Prikry points} of the condition $p$.
\item For $0 \leq i \leq n$, $c_{i-1} \in \Col(\rho_{i-1}^{+}, <\rho_i)$ where
for notational convenience we set $\rho_{-1}=\omega$ and (temporarily)
$\rho_n=\kappa$.
\item $A \in \mathcal{U}$, $\min A > \rho_{n-1}, \sup \dom c_{n-1}$.
\item $C$ is a function with domain $A$, for all $\alpha \in A$ $C(\alpha)\in \Col(\alpha^{+}, < \kappa)$, and $[C]_{\mathcal{U}} \in K$. 
\end{enumerate}
For a condition $p$ as above we write $\rho_m^p$, $A^p$, $c_i^p$ and $C^p$
with the obvious meaning.

We define two orderings. The \emph{direct extension} $\leq^{\star}$, is defined as follows.
$p \leq^\star q$ if $\len p = \len q$, $c_i^p \supseteq c^i_q$ for $i \in \{-1, 0, \dots, n-1\}$, $C^p$ is stronger than $C^q$ pointwise and
$A^p \subseteq A^q$. For a condition $p$ of length $n$ and $\rho \in A^p$, 
we denote by $p \frown \rho$ the
condition of length $n+1$ with $\rho_i = \rho_i^p$ for $i < n$, $\rho_n = \rho$, $c_i = c_i^p$ for $i < n$, $c_n = C(\rho)$, 
measure one set $A^p \setminus \sup\dom c_n$ and the natural restriction of $C$. 
The forcing ordering $\leq$ is given by a combination of direct
extensions and adjoining points as above. Namely, $\leq$ is the transitive closure
of the relation \[\{ (p,q) \in \mathbb{P}^2 \mid p \leq^\star q \text{ or }\exists \rho \in A^p, q = p\frown \rho\}.\] 
\end{definition}

For a condition $p\in\mathbb{P}$ as above, the \emph{stem} of $p$, is $\langle
c_{-1}, \rho_0, c_0, \dots, \rho_{n-1}, c_{n-1}\rangle$. Clearly, if $p, p'\in
\mathbb{P}$ have the same stem then they are compatible. In particular,
$\mathbb{P}$ is $\kappa$-centered.  We also note that $\leq^{\star}$ is only $\sigma$-closed.

\begin{lemma}\label{lemma: prikry property}
$\mathbb{P}$ satisfies the Prikry Property. Namely, for every statement in the forcing language $\Phi$ and condition $p\in \mathbb{P}$ there is $q \leq^\star p$ such that $q \Vdash \Phi$ or $q \Vdash \neg \Phi$.
\end{lemma}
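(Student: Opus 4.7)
The plan is to establish the Prikry Property by the standard recursive argument for Prikry-type forcings with interleaved collapses, exploiting the $\sigma$-closure of $\leq^\star$, the normality of $\mathcal U$, and the closure properties inherited from the guiding generic $K$. Fix $p \in \mathbb P$ and a statement $\Phi$; by working above $\stem(p)$ I may assume $\len p = 0$, so $p = \langle c_{-1}, A, C\rangle$.

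The main technical step is a stem-by-stem analysis. For each potential extension stem $\eta$ past $\stem(p)$ (a finite sequence of Prikry points from $A$ together with compatible collapse conditions), the subposet of conditions $\leq p$ whose stem begins with $\eta$ is, under $\leq^\star$, closed under decreasing sequences of length $<\rho^+$, where $\rho$ is the last Prikry point of $\eta$; this comes from the $\rho^+$-closure of the upper collapse $\Col(\rho^+, <\kappa)$. Using this closure I can either find a direct extension at stem $\eta$ that decides $\Phi$, or record that no further direct extension at stem $\eta$ can decide $\Phi$. I then amalgamate these choices via a diagonal-intersection recursion: at each finite length, normality of $\mathcal U$ reduces the $\kappa$-many one-step extensions to a uniform colour (forces $\Phi$, forces $\neg\Phi$, or undecided) on a measure-one set; the upper-collapse function $C$ is strengthened pointwise through countably many stages using $\sigma$-closure of $\leq^\star$. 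The outcome is a direct extension $q \leq^\star p$ such that for every extension stem $\eta$ reachable from $q$'s upper part, the canonical condition associated to $\eta$ is classified uniformly as positive, negative, or undecided.

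To finish, suppose for contradiction that $q$ does not decide $\Phi$; then some $r \leq q$ decides $\Phi$, with some extension stem $\eta$. By construction $\eta$ cannot be undecided, so the canonical representative at $\eta$ in $q$'s analysis forces $\Phi$ the same way as $r$, and this decision is inherited by $q$ itself via the direct-extension ordering, a contradiction. The main obstacle is ensuring that each upper-collapse function $C'$ produced during the recursion is a legitimate coordinate, i.e., that $[C']_{\mathcal U} \in K$: this is where the guiding generic is essential. The pointwise strengthening of $C$ in $V$ corresponds to strengthening the representative of $[C]_{\mathcal U}$ inside $\Col^M(\kappa^+, <j(\kappa))$, and because this collapse is $\kappa^+$-closed in $M$ and $K$ is $M$-generic, countable $\leq^\star$-sequences of such strengthenings have lower bounds whose equivalence classes remain in $K$. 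Consequently the recursion can be carried out without leaving the forcing $\mathbb P$.
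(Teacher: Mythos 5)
Your outline treats the forcing as if a one--step extension were determined by the next Prikry point alone, and this is exactly where the real content of the lemma lies. In $\mathbb{P}$ a stem carries, besides the points, the interleaved collapse conditions: extending by $\rho$ introduces a condition in $\Col(\rho^{+},<\kappa)$ (which a further direct extension may strengthen arbitrarily), and the finitely many lower collapses $c_{-1},\dots,c_{n-1}$ may also be strengthened. The truth value of $\Phi$ depends on these coordinates, they are not measured by $\mathcal{U}$, and the lower collapses have essentially no closure; so the step ``normality of $\mathcal{U}$ reduces the $\kappa$-many one-step extensions to a uniform colour on a measure-one set'' does not apply as stated. The standard proof (see \cite{Gitik2010}, to which the paper defers) must uniformize the collapse coordinates separately: the collapse at the new point is absorbed into the guiding function by strengthening $C$ inside $K$ (using genericity of $K$ and its closure under decreasing sequences lying in $V$), while the lower collapses can only be handled by passing to a \emph{dense set of stems} -- this is precisely why Lemma \ref{lemma: strong prikry property} is formulated with the dense set $E$ rather than with a single stem. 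Relatedly, your closure claim for ``conditions whose stem begins with $\eta$'' holds only if the lower collapse coordinates are frozen, and ``countably many stages using $\sigma$-closure'' is not enough: there are $\kappa$ many stems to amalgamate, so one needs diagonal intersections together with the fact that $K$ meets every lower bound problem of length $\kappa$ coming from $V$ (provable from $M^{\kappa}\cap V\subseteq M$, the $\kappa^{+}$-closure of $\Col^{M}(\kappa^{+},<j(\kappa))$ in $M$, and genericity), not mere $\sigma$-closure.

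The concluding paragraph also does not work as written. From ``the canonical condition at $\eta$ forces $\Phi$ the same way as $r$'' you infer that ``this decision is inherited by $q$ itself via the direct-extension ordering''; but a weaker condition never inherits a decision from a stronger one, and different stems below $q$ (different points, and especially different collapse conditions) may decide $\Phi$ in opposite directions without contradicting anything, since such conditions are incompatible. The correct finish is a downward induction on the length of the stem: one homogenizes the decision over the added points using normality (Rowbottom-style), and over the collapse coordinates using the strengthened $C$ and a density argument in the finite product of lower collapses, so that removing the last point preserves a uniform decision; only then does one conclude that some direct extension of $p$ (which is allowed to strengthen the finitely many lower collapses of $p$) decides $\Phi$. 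Your colouring, indexed by the next point only, is too coarse to support this induction, so the proposal as it stands has a genuine gap at both the amalgamation and the final step.
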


Using the Prikry Property and a standard factorization argument, one can show that the set of cardinals below $\kappa$ in the generic extension is exactly $\{\omega, \omega_1\} \cup \{\rho_n, \rho_n^{+} \mid n < \omega\}$. In particular, $\kappa$ is forced to be $\aleph_\omega$ of the generic extension.

Let $p$ be a condition with stem $s$. The set of stems of conditions $q\leq p$, $\len q = \len p$ is naturally isomorphic to a finite product of Levy collapses. We will say that a set of stems $D$ is dense if it is dense with respect to this order.

Lemma \ref{lemma: prikry property} has several stronger versions which are called the Strong Prikry Property. The version which we need is the following:

\begin{lemma}\label{lemma: strong prikry property}
Let $D\subseteq \mathbb{P}$ be a dense open set. 
There are a large set $A\in \mathcal{U}$ and a condition $[C] \in K$ 
such that the following holds. For every condition of the form $p = \stem (p)^\smallfrown \langle A, C\rangle$, 
there is a dense set of extensions for the stem (with the same length), $E$, such that for every $q \leq^\star p$ with 
$\stem q\in E$, there is a natural number $m$ such that for every $q'\leq q$, with $\len q' = \len q + m$, $q' \in D$.
\end{lemma}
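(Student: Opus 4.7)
The plan is to follow the standard Strong Prikry Property template, adapted to handle both the interleaved collapses and the diagonalization over stems. The argument proceeds in two stages: a per-stem analysis using the Prikry Property (Lemma \ref{lemma: prikry property}) together with the $\sigma$-closure of $\leq^{\star}$, followed by a diagonalization over the $\kappa$ many stems using normality of $\mathcal{U}$ and $M$-genericity of $K$ against the $\kappa^{+}$-closure of $\Col^M(\kappa^{+}, <j(\kappa))$ in $M$.

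For the per-stem step, fix a stem $s$ of length $n$. For each $m < \omega$, apply the Prikry Property to the statement ``the generic filter contains a condition of length exactly $n+m$ extending $\check s$ that lies in $\check D$'', producing a direct extension $s^{\smallfrown}\langle A^m_s, C^m_s\rangle$ that decides it. Intersecting over $m < \omega$ via the $\sigma$-closure of $\leq^{\star}$ yields a single $s^{\smallfrown}\langle A_s, C_s\rangle$ deciding every such statement simultaneously. Density of $D$ guarantees that for each stem $s$ some decision becomes positive after further strengthening of the collapses; a short combinatorial argument using the density of finite products of Levy collapses and the downward closure of $D$ then shows that the set $E_s$ of stem-strengthenings $s'$ of $s$ (same Prikry length, stronger collapses) admitting a least $m_{s'}$ for which \emph{every} length-$(n + m_{s'})$ extension of $s'^{\smallfrown}\langle A_s, C_s\rangle$ lies in $D$ is dense.

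For the diagonalization, enumerate the stems as $\langle s_\alpha : \alpha < \kappa\rangle$ so that the support of $s_\alpha$ (its Prikry points and the domains of its collapse entries) is bounded below $\alpha$, and set $A^{\ast} = \{\alpha < \kappa : \alpha \in A_{s_\beta} \text{ for every } \beta < \alpha\}$, which lies in $\mathcal{U}$ by normality. For the guiding generic, the $\kappa$-sequence $\langle [C_{s_\alpha}] : \alpha < \kappa\rangle$ of elements of $M$ lies in $M$ by part (1) of the Fact above; $\kappa^{+}$-closure of $\Col^M(\kappa^{+}, <j(\kappa))$ in $M$ makes the set of common $M$-lower bounds of this sequence dense in $\Col^M(\kappa^{+}, <j(\kappa))$, and genericity of $K$ then produces some $[C^{\ast}] \in K$ with $[C^{\ast}] \leq [C_{s_\alpha}]$ for every $\alpha$. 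With $A^{\ast}$ and $C^{\ast}$ in hand, for any stem $s$ the condition $s^{\smallfrown}\langle A^{\ast}, C^{\ast}\rangle$ is a $\leq^{\star}$-refinement of $s^{\smallfrown}\langle A_s, C_s\rangle$, so downward closure of $D$ allows $E_s$ to continue to witness the required uniformity.

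The main obstacle is this last diagonalization step over the guiding generic: the objects $C_s$ are selected in $V$ but their equivalence classes $[C_s]$ must live in the single $M$-generic filter $K$. One must carefully combine Fact (1) (to place the $\kappa$-sequence of $[C_s]$'s inside $M$), $\kappa^{+}$-closure in $M$ (for the existence of a common $M$-lower bound), and $M$-genericity of $K$ (to place the refinement inside $K$). A naive pointwise-minimum diagonalization of the $C_s$ in $V$ would not suffice, since the collapses $\Col(\alpha^{+}, <\kappa)$ are only $\alpha^{+}$-closed in $V$ and one cannot straightforwardly form meets of $\kappa$-many conditions without using the structure of $M$ and $K$.
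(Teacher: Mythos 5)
The paper itself does not prove this lemma --- it is quoted as a standard fact with a pointer to \cite{Gitik2010} --- so the comparison is with the standard argument, whose architecture (per-stem stabilization, a diagonal intersection via normality for the measure-one part, and an appeal to the closure of $M$ and the genericity of $K$ for the guiding part) your outline does follow. However, one step of your diagonalization is false as written: the set of common $M$-lower bounds of the fixed sequence $\langle [C_{s_\alpha}] \mid \alpha<\kappa\rangle$ is \emph{never} dense in $\Col^M(\kappa^{+},<j(\kappa))$, since any condition incompatible with a single $[C_{s_\alpha}]$ has no extension in that set, so genericity of $K$ cannot be invoked in the way you state. What rescues the step is information you did not use: clause (5) of the definition of $\mathbb{P}$ forces $[C_{s}]_{\mathcal U}\in K$ for the upper part of any condition (and the direct extensions supplied by Lemma \ref{lemma: prikry property} are conditions), so the $[C_{s_\alpha}]$ are pairwise compatible elements of $K$; since the sequence lies in $M$ by part (1) of the Fact, the union $q$ of these partial functions is a single condition of $\Col^M(\kappa^{+},<j(\kappa))$ lying in $M$, and $q\in K$ because the dense set $\{r \mid r\le q \text{ or } r\perp q\}$ belongs to $M$ while every member of $K$ is compatible with $q$. (Alternatively, one uses that the guiding generic of the Fact is constructed as the upward closure of a $\le$-decreasing $\kappa^{+}$-sequence, which directly gives lower bounds in $K$ for $\kappa$-sequences of members of $K$.) The same issue already appears at your $\sigma$-closure step, where the countably many $C^m_s$ must be amalgamated inside $K$, not merely inside the collapse.

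The per-stem stage also has a gap at its core. The Prikry property for this forcing produces a direct extension that is in general obliged to strengthen the collapse coordinates \emph{inside the stem}, so you cannot stabilize your decisions ``with the same stem $s$'' and attach a single upper part $\langle A_s,C_s\rangle$ to $s$ itself; this is exactly why the lemma is formulated with a dense set $E$ of stem-strengthenings rather than with a fixed stem. More importantly, passing from a positively decided statement of the form ``some condition of length $n+m$ extending $s$ lies in $D$'' to the universal conclusion that \emph{every} $q'\le q$ with $\len q'=\len q+m$ lies in $D$ (for all $q\le^{\star}p$ whose stem lies in a dense set) is the actual content of the Strong Prikry Property: it needs a Rowbottom-style stabilization, over all potential $m$-tuples of future Prikry points, of whether suitable interleaved collapse parts exist, with the collapse side handled by density in the finite product of L\'evy collapses and by refining $C$ inside $K$. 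Openness of $D$ alone does not yield this uniformity, and your ``short combinatorial argument'' is precisely this missing core; as it stands the proposal is a correct strategy with its two load-bearing steps asserted rather than proved, one of them via a density claim that is not true.
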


Let $p$ be a condition of length $n$,
\[p = \langle c_{-1}, \rho_0, c_0, \rho_1, c_1, \dots, \rho_{n-1}, c_{n-1}, A, C\rangle.\]

Let $\mathbb{P}\restriction p$ be the set of conditions in $\mathbb{P}$ which are stronger than $p$. Let $\mathbb{P}_n\restriction p$ be the forcing 
\[\Col(\omega_1, {<}\rho_0) \times \Col(\rho_0^{+}, {<}\rho_1) \times \cdots \times \Col(\rho_{n-1}^{+}, {<}\kappa)\]
below the condition $(c_{-1}, \dots, c_{n-1})$. Note that we include in the
definition of $\mathbb{P}_n$ the last collapse of all cardinals between
$\rho_{n-1}^+$ and $\kappa$. This will be useful later.

Let $W$ be a model of set theory, $V \subseteq W$. In $W$, let $\langle \rho_n \mid n <
\omega\rangle\in W$ be a sequence of $V$-regular cardinals below $\kappa$ and
let $C_n \subseteq \Col^V(\rho_n^{+}, <\rho_{n+1})$, $C_{-1} \subseteq
\Col^V(\omega_1, <\rho_0)$ be filters.

Let $\mathcal{C} = \langle C_n \mid -1 \leq n < \omega\rangle$, $P = \langle \rho_n \mid n < \omega\rangle$. Let us denote by $G(\mathcal{C}, P)$ the filter which is defined from $\mathcal{C}$ and $P$. Namely,  $G(\mathcal{C}, P) \subseteq \mathbb{P}$ is defined by:
\[p = \langle c_{-1}, \eta_0, \dots, c_{n-2}, \eta_{n-1}, c_{n-1}, A, F\rangle \in G(\mathcal{C}, P)\]
if and only if
\begin{enumerate}
\item $p\in\mathbb{P}$.
\item For all $m \in \omega$ with $m < n$, $\rho_m = \eta_m$, $c_m \in C_m$ (in particular, the domain of $c_{n-1}$ is a subset of $\rho_{n-1}^{+}\times\rho_{n}$). 
\item $c_{-1}\in C_{-1}$.
\item For all $m \geq n$, $\rho_m \in A$ and $F(\rho_m) \in C_m$.
\end{enumerate} 

\begin{theorem}\label{theorem: prikry generic from collapses and sequence}
Let $\mathcal{C}, P\in W$ be as above. $G(\mathcal{C}, P)$ is $V$-generic if and
only if
\begin{enumerate}
\item For every $m \in \{-1\} \cup \omega$, $C_m$ is $V$-generic.
\item For every $A\in \mathcal{U}$, there is $n < \omega$ such that for all $m \geq n$, $\rho_m \in A$.
\item For every $C\colon \kappa \to V$ such that $[C]\in K$ there is $n < \omega$ such that for all $m \geq n$, $C(\rho_m) \in C_m$.
\end{enumerate}
\end{theorem}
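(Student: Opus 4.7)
I would prove both directions: necessity by routine density arguments, sufficiency via the Strong Prikry Property.

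For necessity, assuming $G = G(\mathcal{C}, P)$ is $V$-generic, (2) follows from density of $\{p \in \mathbb{P} : A^p \subseteq A\}$ for each $A \in \mathcal{U}$; (3) from density of $\{p \in \mathbb{P} : C^p(\alpha) \leq C(\alpha) \text{ for all } \alpha \in A^p\}$ for each $C$ with $[C] \in K$; and (1) from observing that, given $m$ and a dense $D^{\star} \subseteq \Col(\rho_m^{+}, <\rho_{m+1})$ in $V$, the set of $p \in \mathbb{P}$ of length at least $m+2$ with $c_m^p \in D^{\star}$ is dense in $\mathbb{P}$, so $G$ meets it and hence $C_m \cap D^{\star} \neq \emptyset$.

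For sufficiency, fix dense open $D$ in $V$. Lemma~\ref{lemma: strong prikry property} gives $A \in \mathcal{U}$ and $C$ with $[C] \in K$. By (2) and (3), choose $n$ so that $\rho_m \in A$ and $C(\rho_m) \in C_m$ for all $m \geq n$, pick $c_i \in C_i$ for $-1 \leq i \leq n-1$, and form
\[
p_0 = \langle c_{-1}, \rho_0, c_0, \ldots, \rho_{n-1}, c_{n-1}, A \cap (\sup \dom c_{n-1}, \kappa), C \rangle \in G(\mathcal{C}, P).
\]
Strong Prikry applied to $\stem(p_0)$ yields a dense set $E$ of length-$n$ stems extending $\stem(p_0)$ such that each $s \in E$ has a uniform $m_s \in \omega$ for which every length-$(n + m_s)$ extension of $s \frown \langle A, C \rangle$ lies in $D$. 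The key step is to locate $s \in E$ with $s_i \in C_i$ for every $i < n$; once found, setting $q := s \frown \langle A, C\rangle$ and extending $q$ by the next $m_s$ Prikry points $\rho_n, \ldots, \rho_{n + m_s - 1}$ from $P$, with collapses $C(\rho_j) \in C_j$ by the choice of $n$, produces $q' \in D \cap G(\mathcal{C}, P)$.

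The main obstacle is precisely this stem-matching step. $E$ is dense in the product $\mathbb{P}_n$ whose last factor is $\Col(\rho_{n-1}^{+}, <\kappa)$, whereas $C_{n-1}$ lives in the narrower factor $\Col(\rho_{n-1}^{+}, <\rho_n)$. Using the decomposition $\Col(\rho_{n-1}^{+}, <\kappa) \cong \Col(\rho_{n-1}^{+}, <\rho_n) \times \Col(\rho_{n-1}^{+}, [\rho_n, \kappa))$, the projection of $E$ to the narrower product $\prod_{i<n}\Col(\rho_{i-1}^{+}, <\rho_i)$ is dense and therefore met by the mutually $V$-generic filters $C_{-1}, \ldots, C_{n-1}$; the remaining technical task is to argue that this meeting point actually lifts back to an element of $E$ without overflow in the last coordinate, so that the extension by Prikry points of $P$ lands in both $D$ and $G(\mathcal{C}, P)$. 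Carefully combining openness of $E$, the product decomposition, and mutual genericity of the finitely many $C_i$'s should complete the argument.
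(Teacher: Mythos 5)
Your overall route is the same as the paper's: the forward direction by routine density, and the backward direction by applying Lemma~\ref{lemma: strong prikry property} to get $A$ and $[C]\in K$, using (2) and (3) to pick $n$, sitting on a length-$n$ condition of $G(\mathcal{C},P)$ with top part (essentially) $\langle A,C\rangle$, meeting the dense set of stems $E$ by the mutually generic product $C_{-1}\times\cdots\times C_{n-1}$ (the paper gets mutual genericity by the standard Easton-style argument, which you assert but do not prove), and then walking $m$ more steps along $P$ with collapses $C(\rho_j)\in C_j$ to land in $D\cap G(\mathcal{C},P)$. Your necessity sketch is fine except for a small repair in (1): the set ``$\len p\geq m+2$ and $c^p_m\in D^\star$'' is not dense as written, since for conditions whose $m$-th or $(m{+}1)$-th Prikry point differs from $\rho_m,\rho_{m+1}$ the clause does not apply; you should either add the disjunct $c^p_m\notin\Col(\rho_m^{+},<\rho_{m+1})$ (or $\rho^p_m\neq\rho_m$), or argue via the collapse generic derived from $G(\mathcal{C},P)$.

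The genuine gap is the step you yourself flag, and your proposed fix does not close it. Density of the projection of $E$ to the product whose last factor is $\Col(\rho_{n-1}^{+},<\rho_n)$ only yields a tuple $(c_{-1},\dots,c_{n-1})$ that enters $E$ \emph{after} being augmented by some overflow piece $e\in\Col(\rho_{n-1}^{+},[\rho_n,\kappa))$; the tuple itself need not lie in $E$, and openness of $E$ is of no help, since openness permits further strengthening but never discarding $e$. Moreover no condition of $G(\mathcal{C},P)$ can ever carry such an $e$: by definition its $(n{-}1)$-st collapse coordinate lies in $C_{n-1}$, hence has domain contained in $\rho_{n-1}^{+}\times\rho_n$. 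So ``lifting back without overflow'' is exactly what must be proved, and it cannot be extracted from the bare statement that $E$ is dense in the product ending with $\Col(\rho_{n-1}^{+},<\kappa)$: for instance, the set of stems whose last coordinate has a domain point above some prescribed ordinal is dense open there and misses the bounded product entirely. What is needed (and what the paper uses implicitly when it asserts that $C_{-1}\times\cdots\times C_{n-1}$ meets $E$) is finer information about how $E$ is produced in the proof of Lemma~\ref{lemma: strong prikry property}: the unbounded demands on the top collapse coordinate are absorbed there into the choice of $[C]\in K$, using the genericity and closure of the guiding generic, so that $E$ may be taken dense already in the bounded product below any prospective next Prikry point; hypothesis (3), together with the genericity of $K$, is then what transfers those absorbed demands into the filters $C_m$. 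Without either proving such a refined form of the Strong Prikry Property or invoking the structure of its proof, your argument stalls precisely at the point you call ``the remaining technical task,'' so as it stands the sufficiency direction is incomplete.
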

\begin{proof}
The forward direction is clear. 

For the backwards direction, let $G$ be the filter generated by $\mathcal{C}, P$. 
Let $D$ be a dense open subset of $\mathbb{P}$.
We will find a condition $r\in G \cap D$.

By the Strong Prikry Property (Lemma \ref{lemma: strong prikry property}), there
are a large set $A$ and a member $[C]$ of $K$ such that for every condition $q$
of the form $\stem(q)^\smallfrown \langle B, F\rangle$,
with $B \subseteq A$ and $\forall \alpha \in \dom(F),\, F(\alpha) \leq
C(\alpha)$, there is dense subset $E$ of the stems of $\mathbb{P}$ below
$\stem(q)$ as in the conclusion of Lemma \ref{lemma: strong prikry property}.
 
Let $q\in G$ of some length $n$ such that for all $m \geq n$, $\rho_m \in A$ and
$C(\rho_m) \in C_m$.  Let $E$ be the witnessing dense open set of stems as
above.  By a standard argument using Easton's lemma, $C_{-1} \times C_0 \times
\cdots \times C_{n-1}$ is $V$-generic.  For notational convenience we call this
generic $C_n^*$.  Since $(c^{q}_{-1}, c_0^{q}, \dots c_{n-1}^{q}) \in C_n^*$,
there is some extension $(c_{-1},c_0, \dots c_{n-1})$ of it in $C_n^* \cap E$.
Let $q'$ be the strengthening of $q$ by $(c_{-1},c_0, \dots c_{n-1})$.

By the conclusion of Lemma \ref{lemma: strong prikry property}, there is a
natural number $m$ such that any $m$-step extension of $q'$ is in $D$.  So if we
take a condition $r \in G$ with $r \leq q'$ of length $n+m$ then it follows that
$r \in D$.\end{proof}

Let $M_0 = V$ and let $j_{n,n} = id$ for all $n < \omega$. Let us define, by induction on $n$, transitive classes $M_n$ and elementary embeddings $j_{m, n} \colon M_m \to M_n$. Let us denote $j_{n} = j_{0, n}$. Let $j_{n, n + 1}\colon M_n \to M_{n+1}$, be the ultrapower by $j_{0, n}(\mathcal{U})$ and let $j_{m, n+1} = j_{n, n+1} \circ j_{m, n}$ for every $m < n$. 

Let $j_{\omega}\colon V \to M_{\omega}$ be the direct limit of the directed system $\langle M_m, j_{m,n} \mid m \leq n < \omega\rangle$. Let $j_{n, \omega}\colon M_n\to M_{\omega}$ be the corresponding elementary embeddings.
\begin{theorem}[Gaifman]
$M_\omega$ is well founded.
\end{theorem}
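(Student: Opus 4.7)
The plan is to run the classical Gaifman argument: assume for contradiction that $M_\omega$ is ill-founded, take an infinite descending $\in$-chain, represent each element of the chain by a function in $V$ via the usual iterated-ultrapower formalism, and then exploit the countable completeness of $\mathcal{U}$ to build a genuine infinite descending $\in$-chain inside $V$.

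Let $a_0 \ni a_1 \ni a_2 \ni \ldots$ be a descending chain in $M_\omega$. Since $M_\omega$ is the direct limit of the $M_n$, each $a_i$ equals $j_{n_i,\omega}(b_i)$ for some $b_i \in M_{n_i}$; by passing to a tail and padding we may arrange $n_i = i$. The standard representation theorem for the $i$-th iterate of a normal measure gives $b_i = j_{0,i}(f_i)(\kappa_0, \ldots, \kappa_{i-1})$ for some $f_i \colon \kappa^i \to V$ in $V$, where $\kappa_m := j_{0,m}(\kappa)$. Since $\crit(j_{i,\omega}) = \kappa_i$, each $\kappa_m$ with $m < i$ is fixed by $j_{i,\omega}$, so
\[
a_i = j_\omega(f_i)(\kappa_0, \ldots, \kappa_{i-1}).
\]
After padding $f_i$ trivially in an extra coordinate, the chain condition $a_{i+1} \in a_i$ translates, via the characterization $X \in \mathcal{U}^n \Leftrightarrow (\kappa_0,\ldots,\kappa_{n-1}) \in j_\omega(X)$, into the assertion that
\[
A_i := \{(\alpha_0, \ldots, \alpha_i) \in \kappa^{i+1} : f_{i+1}(\alpha_0, \ldots, \alpha_i) \in f_i(\alpha_0, \ldots, \alpha_{i-1})\}
\]
belongs to the $(i+1)$-fold product measure $\mathcal{U}^{i+1}$.

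It now suffices to choose $\alpha_0, \alpha_1, \ldots < \kappa$ recursively so that $(\alpha_0, \ldots, \alpha_i) \in A_i$ for every $i$, since then $f_0 \ni f_1(\alpha_0) \ni f_2(\alpha_0, \alpha_1) \ni \ldots$ is an infinite descending $\in$-chain in $V$, contradicting foundation. At stage $m$, Fubini for the measures $\mathcal{U}^{i+1}$ shows that for each $i \geq m$, after substituting the previously chosen $\alpha_0, \ldots, \alpha_{m-1}$, the set of ``good'' next coordinates is a $\mathcal{U}$-measure-one subset of $\kappa$; using the $\sigma$-completeness of $\mathcal{U}$ we intersect these countably many $\mathcal{U}$-large sets and pick $\alpha_m$ from the intersection. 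The main technical point is this inductive bookkeeping: one must verify at each stage that, given the chosen finite initial segment is good for all remaining $A_i$, Fubini can be iterated through the remaining coordinates to expose the next coordinate as a single $\mathcal{U}$-large condition, so that the countable intersection is well-defined and nonempty; countable completeness of $\mathcal{U}$ (a consequence of its $\kappa$-completeness) then closes the argument.
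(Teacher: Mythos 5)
The paper states this result without proof, citing it as Gaifman's classical theorem, so there is no internal argument to compare against; your proposal is the standard textbook proof (representation of elements of the finite iterates by functions $f\colon\kappa^n\to V$, translation of the descending $\in$-chain into membership of the sets $A_i$ in the iterated product measures, and a Fubini-plus-$\sigma$-completeness diagonalization to realize all the $A_i$ simultaneously and contradict Foundation in $V$), and it is essentially correct. One bookkeeping point deserves care: you cannot literally ``arrange $n_i = i$ by passing to a tail,'' since representatives can only be pushed \emph{up} the directed system (if $n_i > i$ there is no way to lower it), and passing to a subsequence of the chain is not available because $\in$ is not transitive, so a subsequence of a descending $\in$-chain need not be descending. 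The harmless fix is to take the $n_i$ nondecreasing (pushing each $b_i$ up via $j_{n_i,n}$ as needed) and do the padding at the level of the functions, viewing $f_i$ as defined on $\kappa^{N}$ for a common nondecreasing sequence of arities $N_i$ and ignoring the extra coordinates; the membership sets then live in $\mathcal{U}^{N_{i+1}}$ and the same diagonal choice of the $\alpha_m$'s goes through verbatim. With that adjustment, and with the convention that $f_0$ is the $0$-ary function with value $b_0$, your inductive invariant (that after choosing $\alpha_0,\dots,\alpha_{m-1}$ every remaining section of every $A_i$ is large for the appropriate product measure) is exactly what is needed, and $\kappa$-completeness of $\mathcal{U}$ supplies the countable intersections.
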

The following fact is well-known:
\begin{lemma}\label{lemma: MomegaP is closed}
$M_{\omega}[\langle j_{n}(\kappa) \mid n < \omega\rangle]$ is closed under $\kappa$-sequences.
\end{lemma}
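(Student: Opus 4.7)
The plan is to exploit the standard representation of elements of the iterated ultrapower. Since $\crit(j_{n,n+1}) = j_n(\kappa)$ and the critical sequence $c_n := j_n(\kappa)$ is strictly increasing, each embedding $j_{n,\omega}$ fixes $c_0, \dots, c_{n-1}$. Combined with the fact that every $y \in M_n$ has the form $y = j_{0,n}(F)(c_0, \dots, c_{n-1})$ for some $F \in V$, this yields the representation: every element of $M_\omega$ is of the form $j_\omega(F)(c_0, \dots, c_{n-1})$ for some $n < \omega$ and some $F \in V$.

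I would first handle $\kappa$-sequences of elements of $M_\omega$. Given $\vec x = \langle x_\alpha \mid \alpha < \kappa\rangle \in V$ with $x_\alpha \in M_\omega$, the idea is to choose, for each $\alpha$, a pair $(n_\alpha, F_\alpha)$ with $x_\alpha = j_\omega(F_\alpha)(c_0, \dots, c_{n_\alpha - 1})$, and then bundle these into a single function $\tilde F \colon \kappa \to V$ in $V$, defined by $\tilde F(\alpha) = \langle n_\alpha, F_\alpha\rangle$. This is a set in $V$, so its image $j_\omega(\tilde F)$ is an element of $M_\omega$.

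The key observation is that $\crit(j_\omega) = \kappa$, so for every $\alpha < \kappa$ we have $j_\omega(\tilde F)(\alpha) = \langle n_\alpha, j_\omega(F_\alpha)\rangle$. Consequently, inside $M_\omega[\vec c]$ one recovers $x_\alpha$ uniformly in $\alpha$ from the two parameters $j_\omega(\tilde F) \in M_\omega$ and $\vec c$: read off $n_\alpha$ and $j_\omega(F_\alpha)$ from $j_\omega(\tilde F)(\alpha)$, then evaluate at $(c_0, \dots, c_{n_\alpha - 1})$. This places $\vec x$ in $M_\omega[\vec c]$. The general case, in which $x_\alpha \in M_\omega[\vec c]$, reduces to this one by first coding each $x_\alpha$ as $\sigma_\alpha(\vec c)$ for some $\sigma_\alpha \in M_\omega$ and applying the argument to the sequence $\langle \sigma_\alpha \mid \alpha < \kappa\rangle$.

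The main obstacle is the representation lemma for elements of $M_\omega$, which relies on the fixing behaviour of $j_{n,\omega}$ on the earlier critical points and on the commutativity $j_\omega = j_{n,\omega} \circ j_{0,n}$. Once that is available the rest of the argument is bookkeeping: the $\kappa$ many choices of $(n_\alpha, F_\alpha)$ can be assembled in $V$ into a single set-sized function, whose image under $j_\omega$ carries all the data needed to reconstruct $\vec x$ inside $M_\omega[\vec c]$.
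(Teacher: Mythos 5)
Your proposal is correct and follows essentially the same route as the paper: represent each coordinate as $j_\omega(F_\alpha)$ applied to an initial segment of the critical sequence, bundle the $\kappa$ many functions into a single function in $V$, apply $j_\omega$ and use $\crit(j_\omega)=\kappa$ to recover $\langle j_\omega(F_\alpha)\mid\alpha<\kappa\rangle$ inside the model, then evaluate along the critical sequence. The only cosmetic difference is that the paper first reduces to $\kappa$-sequences of ordinals (using that $M_\omega[\langle j_n(\kappa)\mid n<\omega\rangle]$ models $\ZFC$), whereas you treat arbitrary elements and then pass through names for elements of the extension; both reductions are routine.
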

\begin{proof}
Since $W = M_{\omega}[\langle j_{n}(\kappa) \mid n < \omega\rangle]$ is a model of $\ZFC$, it is enough to show that for every $\kappa$-sequence of ordinals from $V$, $s=\langle \alpha_\xi \mid \xi < \kappa\rangle$, belongs to $W$. 

Let us fix for every $\xi < \kappa$, a natural number $n_\xi$ and a function $f\colon \kappa^{n_{\xi}} \to \Ord$ such that $j_{\omega}(f)(\kappa, j_1(\kappa), \dots, j_{n_\xi-1}(\kappa)) = \alpha_\xi$. 

Let $F = \langle f_\xi \mid \xi < \kappa\rangle$. $j_{\omega}(F) \restriction
\kappa = \langle j_{\omega}(f_\xi) \mid \xi < \kappa\rangle$. By applying each
function from the sequence $j_{\omega}(F) \restriction \kappa$ to the corresponding initial segment of $\langle j_n(\kappa) \mid n < \omega\rangle$, we get $s$.  
Since $j_\omega(F)\restriction \kappa \in W$, we conclude that $s\in W$.
\end{proof}

\begin{definition}
For a subset $X$ of a partial order $\mathbb{X}$ we will denote by ${<}X{>}$ the upwards closure of $X$:
\[{<}X{>} = \{x \in \mathbb{X} \mid \exists y\in X,\, x \geq y\}.\]
\end{definition}

The following well-known fact will play a major role in Section \ref{section: stationary reflection}.
\begin{lemma}\label{lemma: prikry generic over Momega}
Let $p \in \mathbb{P}$, $\len p = n$. Let $G' \subseteq \mathbb{P}_n\restriction p$ be a $V$-generic filter. In $V[G']$ there is an $M_\omega$-generic filter for $j_\omega(\mathbb{P})$ that contains $j_\omega(p)$.
\end{lemma}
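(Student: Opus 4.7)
The plan is to apply Theorem \ref{theorem: prikry generic from collapses and sequence} in the model $M_\omega$, reducing the task to exhibiting, inside $V[G']$, a Prikry sequence in $M_\omega$ and matching collapse filters that satisfy the three clauses of that theorem. The Prikry sequence I choose is
\[
P = \langle \rho_0^p, \ldots, \rho_{n-1}^p\rangle ^\frown \langle j_m(\kappa) \mid m < \omega \rangle,
\]
with the convention $j_0(\kappa) = \kappa$; the first $n$ entries are fixed by $j_\omega$ because they lie below $\kappa$. For the collapse filters, the slots $C_{-1}, C_0, \ldots, C_{n-1}$ corresponding to $\mathbb{P}_n \restriction p$ are read off from $G'$ (recall that $\mathbb{P}_n$ already includes the final factor $\Col(\rho_{n-1}^+, {<}\kappa)$). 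For each $m \geq 0$, I set $C_{n+m} = j_m(K) \in M_m$, the iterated image of the guiding generic $K$ under the elementary embedding $j_m \colon V \to M_m$.

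Verification of the three clauses, applied in $M_\omega$: the filters read off from $G'$ live on forcings below $\kappa$, where $V$ and $M_\omega$ agree, so $V$-genericity implies $M_\omega$-genericity. Elementarity of $j_m$ applied to ``$K$ is $M$-generic for $\Col^M(\kappa^+, {<}j(\kappa))$'' yields that $j_m(K)$ is $M_{m+1}$-generic for $\Col^{M_{m+1}}(j_m(\kappa)^+, {<}j_{m+1}(\kappa))$. To upgrade to $M_\omega$-genericity, given a dense $D \in M_\omega$, I use the $j_{m+1}(\kappa)$-cc to extract a maximal antichain $A \subseteq D$ of size less than $j_{m+1}(\kappa)$; then $A$ has rank less than $j_{m+1}(\kappa)$, and the standard agreement $V_{j_{m+1}(\kappa)}^{M_{m+1}} = V_{j_{m+1}(\kappa)}^{M_\omega}$ (following from $\crit(j_{m+1, \omega}) = j_{m+1}(\kappa)$) puts $A \in M_{m+1}$; the downward closure of $A$ is then met by $j_m(K)$, and hence so is $D$. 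Clause (2) is the classical fact that the critical sequence of an iterated ultrapower by $\mathcal{U}$ hits every set in $j_\omega(\mathcal{U})$ cofinitely. For clause (3), given $C \in M_\omega$ with $[C]_{j_\omega(\mathcal{U})} \in j_\omega(K)$, write $C = j_{l, \omega}(\bar C)$ for some $l$ and $\bar C \in M_l$; elementarity of $j_{l, \omega}$ turns the hypothesis into $[\bar C]_{j_l(\mathcal{U})} \in j_l(K)$. An iterated ultrapower calculation, exploiting that $j_{k+1,\omega}$ fixes conditions of rank below $j_{k+1}(\kappa)$, gives $C(j_k(\kappa)) = j_{l,k}([\bar C]_{j_l(\mathcal{U})}) \in j_{l,k}(j_l(K)) = j_k(K)$ for every $k \geq l$, which is precisely clause (3).

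It remains to check that $j_\omega(p) \in G(\mathcal{C}, P)$: the first $n$ Prikry points and collapse conditions of $j_\omega(p)$ coincide with ours (since $p$'s stem lies below $\kappa$ and is therefore fixed by $j_\omega$), while the remaining items in the definition of $G(\mathcal{C}, P)$ are delivered by clauses (2) and (3) applied to $A^p$ and $C^p$: $\kappa \in j(A^p)$ propagates to $j_m(\kappa) \in j_\omega(A^p)$ through the iteration, and the guiding condition $[C^p]_{\mathcal{U}} \in K$ propagates similarly to $j_\omega(C^p)(j_m(\kappa)) = j_m([C^p]_{\mathcal{U}}) \in j_m(K)$. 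The main technical obstacle is the upgrade from $M_{m+1}$-genericity of $j_m(K)$ to $M_\omega$-genericity -- the only step that uses specific closure and agreement properties of the iteration together with the chain condition of the collapse; everything else is bookkeeping.
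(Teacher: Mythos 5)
Your proposal is correct and is essentially the paper's own argument: you use the same sequence $P$ (stem of $p$ followed by the critical sequence), the same collapse filters (your $j_m(K)$ is exactly the paper's $K_{m+1}={<}j_m\image K{>}$), the same appeal to Theorem \ref{theorem: prikry generic from collapses and sequence}, and the same propagation computations for $A^p$, $C^p$ and for clause (3). The only divergence is local: where the paper verifies $M_n$-genericity of $K_n$ by a direct density computation (representing dense sets of $M_n$ by functions $f\colon\kappa^n\to V$ and using distributivity together with the $M_1$-genericity of $K$), you invoke elementarity of $j_m$ and then upgrade to $M_\omega$-genericity via a chain-condition and rank-agreement argument; this works, but the upgrade is immediate from the simpler observation that $M_\omega$ is a definable inner model of $M_{m+1}$, so every dense set of $M_\omega$ already lies in $M_{m+1}$ and no antichain extraction is needed.
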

\begin{proof}
Let $n > 0$. Let $K_n \subseteq \Col(j_{n-1}(\kappa)^{+}, <j_{n}(\kappa))^{M_n}$ be ${<}j_{n-1}\image K{>}$, i.e.\ the filter which is generated from $j_{n-1}\image K$. Note that $K_n = j_{n-1}(K)$. The following argument is standard, see \cite{CummingsWoodin}. 
\begin{claim}
$K_n$ is $M_n$-generic. 
\end{claim}
\begin{proof} 
Let $D \in M_n$ be a dense open set. Then there is a function $f\colon \kappa^n
\to V$ such that for all $a \in \kappa^n$, $f(a)$ is a dense open subset of the
forcing $\Col(a_{n-1}^{+}, <\kappa)$ where we write $a = \{a_0,a_1, \dots
a_{n-1}\}$ listed in an increasing order. 
By the distributivity of the forcing $\Col(a_{n-1}^{+}, <\kappa)$, 
for every $\alpha < \kappa$, the set $D_\alpha = \bigcap_{b \in \alpha^{n-1}} f(b^\smallfrown \langle\alpha\rangle)$ 
is dense open in $\Col(\alpha^{+},<\kappa)$. Let $q$ be a function with domain $\kappa$ such that 
$[q]_{\mathcal{U}} = j_1(q)(\kappa) \in K$, and 
$\{\alpha < \kappa \mid q(\alpha) \in D_\alpha\}\in \mathcal{U}$. 
Such a condition exists, since $K$ is $M_1$-generic. 
Let us consider the function $\tilde{q} \colon \kappa^n \to V$ which is 
defined as $\tilde{q}(a) = q(a_{n-1})$. 

Let $r = j_n(\tilde{q})(\kappa, j_1(\kappa), \dots, j_{n-1}(\kappa))$. $r = j_{n-1}(j_1(q)(\kappa))$ since:

\[\begin{matrix}
j_{n-1}(j_1(q)(\kappa)) & = & j_{n-1}([q]_{\mathcal{U}}) \\ 
& = & [j_{n-1}(q)]_{j_{n-1}(\mathcal{U})} \\ 
& = & j_{n-1, n}(j_{n-1}(q))(j_{n-1}(\kappa)) \\ 
& = & j_n(q)(j_{n-1}(\kappa))
\end{matrix}\]  
and by the definition of $\tilde{q}$, $j_{n}(\tilde{q})(\kappa, \dots, j_{n-1}(\kappa)) = j_n(q)(j_{n-1}(\kappa))$. We conclude that $r \in D$.

Since $r = j_{n-1}([q]_{\mathcal{U}})$, $r\in K_n$. 
\end{proof}
Note that $j_{n, \omega}\image K_n = K_n$ and that $K_n$ is also an $M_\omega$-generic filter.

Let $\mathcal{C}$ be the sequence of generic collapses from $G'$, augmented by the sequence $\langle K_1, K_2, \dots\rangle$. Let $P = \langle \rho_0, \dots, \rho_{n-1}, \kappa, j_1(\kappa), j_2(\kappa),\dots\rangle$, where $\rho_0, \dots, \rho_{n-1}$ are the Prikry points in the condition $p$. Let $G = G(\mathcal{C}, P) \subseteq j_\omega(\mathbb{P})$, as in Theorem \ref{theorem: prikry generic from collapses and sequence}. 

For every $A\in j_{\omega}(\mathcal{U})$ there is $m < \omega$ such that $A =
j_{m, \omega}(A')$. Note that the tail of the sequence $P$, starting at point $n
+ m$ is contained in $A$. Similarly, if $q' \in j_{\omega}(K)$ then $q' =
j_{m,\omega}(q)$ for some $q$ and therefore for every $k \geq m$,
$q(j_{k}(\kappa)) \in K_k$. Finally, each $K_n$ is $M_{\omega}$-generic. Thus,
the conditions of Theorem \ref{theorem: prikry generic from collapses and
sequence} hold and $G$ is $M_\omega$-generic for $j_\omega(\mathbb{P})$.
\end{proof}

\subsection{Splitting generic filters}\label{subsection: curly H}
During the proof of the main theorem, Theorem \ref{theorem: preserving stationary reflection}, we will need to analyze models of the form $M_{\omega}[P][\mathcal{H}]$ such that $P$ is the critical sequence and $\mathcal{H}$ has the form $\langle {<}j_{n,\omega} \image H{>} \mid n < \omega\rangle$ where $H$ is a $V$-generic filter for some $\kappa^{+}$-closed forcing notion in $M_{\omega}[P]$.  

Let $\mathbb{A}$ be a forcing notion that has unique greatest lower bounds and a $\kappa^{+}$-closed dense subset, which is closed under those greatest lower bounds. Let us assume that $j_n(\mathbb{A}) = \mathbb{A}$ for all $n < \omega$ (in particular, $\mathbb{A} \in M_{\omega}[P]$). In this subsection we will define and analyze a forcing notion, $\mathbb{H}$, which will have the property that $P$ and $\mathcal{H}$ generate an $M_{\omega}$-generic filter for $j_{\omega}(\mathbb{H})$. 

A lot of information on the model $M_{\omega}[P][\mathcal{H}]$ can be extracted without understanding the forcing $\mathbb{H}$. In particular, one can prove Lemmas \ref{lemma: distributivity of cal H} and \ref{lemma: Momega H is intersection} without mentioning $\mathbb{H}$. Moreover, by applying general arguments one may apply Bukovski's Theorem, \cite{Bukovsky1973}, and deduce the \emph{existence} of some forcing notion that introduces $\mathcal{H}$ over $M_{\omega}[P]$, without knowing what precisely this forcing is. In particular, one can prove Claim \ref{claim: stationary reflection in Momega}, which is central in the Theorem \ref{theorem: preserving stationary reflection}, without explicitly constructing the forcing notion $\mathbb{H}$. Despite this, we prefer to construct the forcing $\mathbb{H}$ in details, since we believe that its structure helps to unravel some of the mysterious properties of $\mathcal{H}$. 

In the following definition we will use the convention that a finite sequence $s$ is an end extension of a sequence $t$ if $t = s \restriction \len t$. In this case we will write $t \trianglelefteq s$. Note that $s \trianglelefteq s$ always holds. We will denote by $s \perp t$ if $s\not\trianglelefteq t$ and $t\not\trianglelefteq s$.

The conditions of $\mathbb{H}$ are pairs of the form $p = \langle T, F\rangle$ where: 
\begin{enumerate}
\item $T \subseteq \kappa^{{<}\omega}$. For every $\eta\in T$ and $n < \len \eta$, $\eta \restriction n \in T$. Let us order $T$ by $\trianglelefteq$. 
\item Any element of $T$ is a strictly increasing finite sequence of regular cardinals.
\item There is a single element, $s \in T$ such that every $t\in T$ is comparable with $s$ and $\len s$ is maximal. Let us denote $\stem(T) = s$. 
\item For every $t \in T$, if $\stem(T) \trianglelefteq t$ then
\[\{\alpha < \kappa \mid t ^\smallfrown \langle\alpha\rangle\in T\}\in\mathcal{U}.\]
\item $F$ is a function, $F\colon T \to \mathbb{A}$.
\item \label{requirement: stabilization}(Stabilization) Let $t\in T$ such that $\stem(T) \trianglelefteq t$. Let $g_t$ be the function $g_t(\alpha) = F(t^\smallfrown \langle\alpha\rangle)$ for all $\alpha < \kappa$ such that  $t^\smallfrown \langle\alpha\rangle\in T$. Then $j(g_t)(\kappa) = F(t)$. \end{enumerate}
For a condition $p = \langle T, F\rangle\in\mathbb{H}$ we write $T^p = T$, $F^p = F$, and $\len p = \len \stem(T)$. We denote $\stem(p) = \langle F(t) \mid t \trianglelefteq \stem(T)\rangle$.

For $p, q\in \mathbb{H}$, we define $p \leq q$ ($p$ extends $q$) if $T^p \subseteq T^q$ and $F^p(\eta) \leq_{\mathbb{A}} F^q(\eta)$ for all $\eta\in T^p$. We define $p\leq^\star q$ ($p$ is a direct extension of $q$) if $p\leq q$ and $\len p = \len q$.

\begin{lemma}[Strong Prikry Property]
Let $D \subseteq \mathbb{H}$ be a dense open set and let $p\in\mathbb{H}$. There is a direct extension $p^\star \leq p$ and a natural number $n < \omega$ such that any $q \leq p^\star$ with $\len q \geq n$ is in $D$.
\end{lemma}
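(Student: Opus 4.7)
The plan is to adapt the standard Strong Prikry Property argument for tree-Prikry forcings to our setting, the extra complication being the working part $F$ valued in $\mathbb{A}$ and the stabilization requirement~(6).

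First I would verify that $(\mathbb{H},\leq^{\star})$ is $\kappa$-closed with stabilization-preserving greatest lower bounds. Given a $\leq^{\star}$-decreasing sequence $\langle p_\alpha : \alpha < \lambda\rangle$ with $\lambda < \kappa$, one intersects the trees (whose stems agree), intersects the measure-one branching sets using $\kappa$-completeness of $\mathcal{U}$ (at longer limits a diagonal intersection based on normality suffices), and takes pointwise greatest lower bounds in $\mathbb{A}$ via its $\kappa^{+}$-closed dense subset. Stabilization survives because $j$ commutes with meets of size less than $\crit(j) = \kappa$:
\[
j\bigl(\textstyle\bigwedge_\alpha g^{p_\alpha}_t\bigr)(\kappa) = \textstyle\bigwedge_\alpha j(g^{p_\alpha}_t)(\kappa) = \textstyle\bigwedge_\alpha F^{p_\alpha}(t).
\]

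Second, I enumerate the nodes $\{\vec\alpha_\xi : \xi < \kappa\}$ of $T^p$ above $s := \stem(T^p)$ and recursively build a $\leq^{\star}$-decreasing sequence $\langle p_\xi : \xi \leq \kappa\rangle$ with $p_0 = p$. At stage $\xi+1$, if $p_\xi \upharpoonright \vec\alpha_\xi$ admits a $\leq^{\star}$-extension in $D$, pick a witness $q$ and let $p_{\xi+1}$ be obtained by splicing the tree and working part of $q$ onto the nodes $\trianglerighteq \vec\alpha_\xi$ of $p_\xi$, leaving $p_\xi$ unchanged elsewhere; otherwise set $p_{\xi+1}=p_\xi$. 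Let $p^{\star}$ be a $\leq^{\star}$-lower bound. For every $\vec\alpha \in T^{p^{\star}}$ with $s\trianglelefteq\vec\alpha$, we then have either $p^{\star}\upharpoonright\vec\alpha\in D$ (the positive decision propagates by openness of $D$) or no $\leq^{\star}$-extension of $p^{\star}\upharpoonright\vec\alpha$ lies in $D$ (by transitivity of $\leq^{\star}$, any such extension was already available at the relevant stage).

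Third, I homogenize. Define $c\colon T^{p^{\star}}_{\geq s}\to 2$ by $c(\vec\alpha)=1$ iff $p^{\star}\upharpoonright\vec\alpha \in D$. By the Rowbottom property of the normal ultrafilter $\mathcal{U}$, there are $A\in\mathcal{U}$ and constants $\epsilon_k\in 2$ such that $c(s^{\smallfrown}\vec\alpha)=\epsilon_{|\vec\alpha|}$ for every increasing $\vec\alpha\in[A]^{<\omega}$ with $s^{\smallfrown}\vec\alpha\in T^{p^{\star}}$. Restricting $T^{p^{\star}}$ to this subtree gives a direct extension $p^{\star\star}\leq^{\star}p^{\star}$; stabilization is preserved because $A$ has $\mathcal{U}$-measure one, so the $\mathcal{U}$-class of each $g^{p^{\star}}_t$ is unaffected.

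Finally, density of $D$ produces $r\leq p^{\star\star}$ with $r\in D$; write $\stem(T^r)=\vec\beta$ of length $n$. Since $r\leq^{\star} p^{\star}\upharpoonright\vec\beta$ and $r\in D$, the construction of $p^{\star}$ forces $p^{\star}\upharpoonright\vec\beta \in D$, so $\epsilon_n=1$. For any $q\leq p^{\star\star}$ with $\len q\geq n$, let $\vec\gamma$ be the length-$n$ initial segment of $\stem(T^q)$; then $\vec\gamma\in[A]^n$, so $p^{\star}\upharpoonright\vec\gamma\in D$, and since $q\leq p^{\star\star}\upharpoonright\vec\gamma\leq^{\star}p^{\star}\upharpoonright\vec\gamma$ with $D$ open, $q\in D$. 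The main obstacle is the splicing at stage $\xi+1$: one has to check that overwriting $F$-values only on the cone $\{t : \vec\alpha_\xi\trianglelefteq t\}$ still yields a legitimate condition of $\mathbb{H}$, which reduces to verifying stabilization at strict initial segments $t\triangleleft\vec\alpha_\xi$, where replacing a single successor's $F$-value does not alter the $\mathcal{U}$-class of $g^{p_\xi}_t$.
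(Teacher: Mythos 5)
There is a genuine gap, and it sits exactly at the point the paper's proof spends most of its effort on: the existence of the $\leq^{\star}$-lower bound $p^{\star}$ at stage $\kappa$. Your closure argument only covers $\leq^{\star}$-decreasing sequences of length $\lambda<\kappa$, and your own justification (``$j$ commutes with meets of size less than $\crit(j)$'') is unavailable at $\lambda=\kappa$. In your construction each node's $F$-value is changed only finitely often, so a pointwise limit $F_\kappa$ exists, but the stabilization clause is an \emph{equality} $j(g_t)(\kappa)=F(t)$, and it can fail for $F_\kappa$: the value at a node $\eta$ is frozen at the stage where $\eta$ is enumerated, while the values at its successors $\eta^{\smallfrown}\langle\beta\rangle$ are strengthened at later stages (whenever their own cones are spliced), possibly for $\mathcal{U}$-many $\beta$; then $j(g_\eta)(\kappa)$ is strictly stronger than $F_\kappa(\eta)$ and $\langle T_\kappa,F_\kappa\rangle$ is not a condition of $\mathbb{H}$. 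This is precisely the crux of the paper's argument: instead of the pointwise limit, it sets $g^n_\eta(\nu)=\lim_\alpha F_\alpha(\eta^{\smallfrown}\nu)$, proves that the sequence $p^n_\eta=j_n(g^n_\eta)(\kappa,j_1(\kappa),\dots,j_{n-1}(\kappa))$ is decreasing, defines $F(\eta)$ as its greatest lower bound, and verifies the stabilization equality by an elementarity computation with the iterated embeddings. Your proposal contains no substitute for this step.

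A second, related problem is your cone-only splicing. You deliberately keep the old $F$-values at the strict initial segments $t\triangleleft\vec\alpha_\xi$ so that clause (6) is preserved for free there, but the witness $q\in D$ is a direct extension of $p_\xi\restriction\vec\alpha_\xi$ and may strengthen $F$ \emph{below} $\vec\alpha_\xi$ as well; discarding those values means $p^{\star}\restriction\vec\alpha_\xi$ need not extend $q$, so ``the positive decision propagates by openness of $D$'' does not follow, and the dichotomy you rely on in the homogenization and in the final step ($p^{\star}\restriction\vec\gamma\in D$ for $\vec\gamma$ of length $n$) is not established. The paper avoids this by adopting the witness's values on the whole comparable part and then repairing stabilization at the initial segments by an $\omega$-step strengthening of the off-branch values --- which is exactly what produces long decreasing sequences of $F$-values and forces the delicate stage-$\kappa$ construction described above. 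So the two places where your write-up is most casual (``let $p^{\star}$ be a $\leq^{\star}$-lower bound'' and ``replacing a single successor's value does not alter the $\mathcal{U}$-class'') are the two places where the actual work of the lemma lies.
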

\begin{proof}
Let $D\subseteq \mathbb{H}$ be dense open and $p\in \mathbb{H}$ be a condition.

Let $\langle \eta_\alpha \mid \alpha < \kappa\rangle$ be an enumeration of
$\kappa^{{<}\omega}$ such that if $\eta_\alpha \trianglelefteq \eta_\beta$, then
$\alpha \leq \beta$.  Let us define, by recursion, a decreasing sequence of
conditions $p_\alpha = \langle T_\alpha, F_\alpha\rangle$, $\alpha < \kappa$, such that $\stem(T_\alpha) = \stem(T_\beta)$ for all $\alpha < \beta$. For all such conditions, the range of $F_\alpha$ is always chosen to be included in the $\kappa^{+}$-closed subset of $\mathbb{A}$.

Let $p = p_0$. Let $s = \stem(T_0)$. For each $\alpha$, if $\eta_\alpha\in T_\alpha$ end extends
$s$, we look at the tree $T_{\eta_\alpha} = \{\eta \in T_\alpha \mid \eta_\alpha \not\perp \eta\}$ and
check if there is a condition $q_\alpha= \langle T', F'\rangle\in D$, which is a
direct extension of $p_\alpha\restriction T_{\eta_\alpha}$. If there is no such condition, we let $p_{\alpha + 1} = p_\alpha$.

Otherwise, let us define: 
\[T'' = \{\eta\in T_\alpha \mid \eta \perp \eta_\alpha \text{ or } \eta \in T'\},\] 
\[F'' = F_{\alpha} \restriction \{\eta\in T_\alpha \mid \eta \perp \eta_\alpha\} \cup F'.\]

For all $\eta \in T'' \setminus T'$, requirement \ref{requirement: stabilization} might fail. Since $F''(\eta)$ is stronger than $F_\alpha(\eta)$, we may find a condition $r_{\eta^\smallfrown \langle\alpha\rangle}$ such that $r_{\eta^\smallfrown \langle\alpha\rangle} \leq F''(\eta^\smallfrown\langle\alpha\rangle)$ for all $\alpha$ and $j(r)_{\eta^\smallfrown\langle \kappa\rangle} = F''(\eta)$. Continue this way for $\omega$ many steps we construct a function $F'''$ with domain $T''$ such that $p_{\alpha + 1} = \langle T'', F'''\rangle$ is a condition.

For a limit ordinal $\alpha < \kappa$, let $p_\alpha = \langle T_\alpha, F_\alpha\rangle$ be the pair, $T_\alpha = \bigcap_{\beta < \alpha} T_{\beta}$ and $F_\alpha(\eta)$ is the greatest lower bound of $F_\beta(\eta)$ for all $\eta\in T_\alpha$ (this lower bound exists by the closure of the forcing $\mathbb{A}$). 

Let us verify that for all $\alpha < \kappa$, $p_\alpha$ is a condition. For limit ordinal $\alpha$, $T_\alpha$ is $\mathcal{U}$-splitting, using the closure of the measure $\mathcal{U}$. $F_\alpha$ satisfies condition \ref{requirement: stabilization}, since for all $\eta\in T_\alpha$, $F_\alpha(\eta)$ is the greatest lower bound of a decreasing sequence of length $\alpha < \kappa$. Since $\alpha < \crit j$, applying $j$ does not change this fact. For successor ordinals $\alpha$, the requirement follows from the construction.

We would like to continue and construct a condition $p_\kappa$, which is a lower bound for the sequence $p_\alpha$. 

Let $T_\kappa = \bigcap_{\alpha < \kappa} T_\alpha$. Since the set of successors of each element in the tree is modified only finitely many times, $T_\kappa$ is a $\mathcal{U}$-splitting tree with stem $s$.

Let us consider for each $\eta \in T_\kappa$ the following sequence of functions. Let $g^n_{\eta}\colon \kappa^n\to\mathbb{A}$ be the function defined by $g^n_\eta(\nu) = \lim_{\alpha} F_{\alpha}(\eta^\smallfrown \nu)$. Let 
\[p^n_\eta = j_n(g^n_\eta)(\kappa, j_1(\kappa), \dots, j_{n-1}(\kappa)).\]
Let us claim that for each $\eta$, the sequence $\langle p^n_\eta \mid n < \omega\rangle$ is decreasing. Indeed, 
\[\begin{matrix}
p^1_\eta & = & j_1(g^1_\eta)(\kappa) & = & \lim_{\alpha < j_1(\kappa)} j_1(F)_\alpha(\eta^\smallfrown \langle \kappa\rangle) \\ 
& \leq &\lim_{\alpha < \kappa} j_1(F)_\alpha(\eta^\smallfrown \langle \kappa\rangle) & = & \lim_{\alpha < \kappa} j_1(F)_\alpha(\eta^\smallfrown \langle \kappa\rangle) \\ 
& = & \lim_{\alpha < \kappa} F_\alpha(\eta) & = & p^0_\eta
\end{matrix}\]

By the elementarity of $j_n$, 
\[j_n(\langle F_\alpha \mid \alpha < \kappa\rangle) = \langle F^{j_n}_\alpha \mid \alpha < j_n(\kappa)\rangle\]
and for all $\alpha < j_n(\kappa)$, for all $\eta \in \dom F^{j_n}_\alpha$, \[j_{n,n+1}(\langle F^{j_n}(\eta \smallfrown \langle \zeta\rangle) \mid \zeta < j_n(\kappa)\rangle)(j_n(\kappa)) = F^{j_n}(\eta).\] 

and therefore, we conclude that in general:
\[\begin{matrix}
p^{n + 1}_\eta & = & j_{n + 1}(g^{n + 1}_\eta)(\kappa, \dots, j_n(\kappa)) \\ 
& = & \lim_{\alpha < j_{n + 1}(\kappa)}\ F^{j_{n+1}}_\alpha(\eta^\smallfrown \langle \kappa, \dots, j_n(\kappa)\rangle) \\ 
& \leq &\lim_{\alpha < j_{n}(\kappa)}\ j_{n+1}(F)_\alpha(\eta^\smallfrown \langle\kappa ,\dots, j_n(\kappa) \rangle) \\ 
& = & \lim_{\alpha < j_n(\kappa)}\ j_{n,n+1}(F^{j_n}_\alpha)(\eta^\smallfrown \langle \kappa, \dots, j_{n-1}(\kappa)\rangle ^\smallfrown \langle j_n(\kappa)\rangle) \\ 
& = & \lim_{\alpha < j_n(\kappa)} (F^{j_n}_\alpha)(\eta^\smallfrown \langle \kappa, \dots, j_{n-1}(\kappa)\rangle) \\
& = &  j_n(g^n_\eta)(\eta^\smallfrown \langle \kappa, \dots, j_{n-1}(\kappa)\rangle) & = & p^n_\eta.
\end{matrix}\]

Let $F(\eta)$ be the greatest lower bound of $\langle p^n_\eta \mid n < \omega\rangle$. Let us claim that requirement \ref{requirement: stabilization} holds for $F$. Namely, that for any $\eta$, if we let $h_\eta(\alpha) = F(\eta^\smallfrown\langle\alpha\rangle)$, then $j(h_\eta)(\kappa) = F(\eta)$. By elementarity, $j(h_\eta)(\kappa)$ is the greatest lower bound of the sequence $q^n_\eta$ where $q^n_\eta = j(\langle p^n_{\eta^\smallfrown\langle\alpha\rangle} \mid \alpha < \kappa\rangle)(\kappa)$. Let us compute:
\[\begin{matrix}
q^n_\eta & = & j(\langle p^n_{\eta^\smallfrown\langle\alpha\rangle} \mid \alpha < \kappa\rangle)(\kappa) & & \\ 
& = & j_1(\langle j_n(g^n_{\eta^\smallfrown\langle\alpha\rangle})(\kappa, \dots, j_{n-1}(\kappa)) \mid \alpha < \kappa\rangle)(\kappa) & &\\
& = & j_{n + 1}(g^n)_{\eta ^\smallfrown\langle\kappa\rangle}(j_1(\kappa), \dots, j_{n}(\kappa)) \\

& = & j_{n+1}(g^{n+1}_\eta)(\kappa, j_1(\kappa), \dots, j_n(\kappa)) & = & p^{n+1}_\eta(\kappa). 
\end{matrix}\] 

Thus, $F(\eta)$ satisfies requirement \ref{requirement: stabilization} in the definition of $\mathbb{H}$. Let $p_\kappa = \langle T_\kappa, F\rangle$.

Let us consider the condition $p_\kappa$. By narrowing down the tree $T = T_\kappa$, we may assume that for any $\eta\in T$ one of the following two holds: Either for every $\alpha < \kappa$, if $\eta^\smallfrown \langle\alpha\rangle\in T$ then the extension of $p$ by picking $\eta^\smallfrown\langle\alpha\rangle$ is in $D$, or that all of them are not in $D$. By narrowing the tree $T$ again, we may assume that for any element of the tree $\eta$ the minimal level of the tree that enters $D$ above $\eta$ is fixed. This induces a coloring of $T$ which is (by induction) fixed on levels. Clearly, if an element $\eta\in T$ was colored by the number $n$ then its successors are colored by $n-1$. Let $n$ be the color of the root of $T$, $\emptyset$. Then, every direct extension of $p_\kappa$ with length $\geq n$ is in $D$, as required.
\end{proof}
The generic filter can be described compactly using a branch in the tree, $P$, and a sequence of filters $\mathcal{H} = \langle H_n \mid n < \omega\rangle$ of $\mathbb{A}$. Let $G(P,\mathcal{H})$ be the set of all conditions $p\in\mathbb{H}$ such that for all $n < \omega$, $P\restriction n \in T^p$ and $F^p(P\restriction n) \in H_n$.
\begin{lemma}\label{lemma: G P calH is filter}
For any increasing $\omega$-sequence in $\kappa$ and collection of filters $\mathcal{H}=\langle H_n \mid n <\omega\rangle$, $G(P,\mathcal{H})$ is a filter.
\end{lemma}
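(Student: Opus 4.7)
The plan is to verify the two defining properties of a filter on $\mathbb{H}$: upward closure and directedness. Upward closure is essentially immediate. Suppose $p \in G(P, \mathcal{H})$ and $p \leq q$, so $T^p \subseteq T^q$ and $F^p(\eta) \leq_{\mathbb{A}} F^q(\eta)$ on $T^p$. For each $n$ we have $P \restriction n \in T^p \subseteq T^q$, and $F^p(P\restriction n) \in H_n$ together with $F^p(P\restriction n) \leq_{\mathbb{A}} F^q(P\restriction n)$ yields $F^q(P\restriction n) \in H_n$, since each $H_n$ is a filter on $\mathbb{A}$.

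The substantive work is directedness. Given $p, q \in G(P, \mathcal{H})$, I would assume without loss of generality that $\len p \leq \len q$, so that $\stem(T^p) = P \restriction \len p \trianglelefteq P \restriction \len q = \stem(T^q)$. My proposed common extension is $r = \langle T^r, F^r\rangle$ with $T^r = T^p \cap T^q$ and $F^r(\eta) = F^p(\eta) \wedge F^q(\eta)$, the greatest lower bound in $\mathbb{A}$ (which exists by the standing hypothesis that $\mathbb{A}$ has unique greatest lower bounds on a $\kappa^+$-closed dense subset). For the tree: below $\stem(T^q)$, the common initial chain lies in $T^p$ (because $P\restriction m \in T^p$ for all $m \leq \len q$) and trivially in $T^q$; above $\stem(T^q)$ both $T^p$ and $T^q$ are $\mathcal{U}$-splitting, and the intersection is again $\mathcal{U}$-splitting because $\mathcal{U}$ is a filter. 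Thus $T^r$ is a valid tree with stem $\stem(T^q)$.

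The main obstacle is checking the stabilization requirement (\ref{requirement: stabilization}) for $F^r$. Write $f^p_\eta(\alpha) = F^p(\eta^\smallfrown\langle\alpha\rangle)$, $f^q_\eta(\alpha) = F^q(\eta^\smallfrown\langle\alpha\rangle)$, and $h_\eta(\alpha) = F^r(\eta^\smallfrown\langle\alpha\rangle) = f^p_\eta(\alpha) \wedge f^q_\eta(\alpha)$. Since $j(\mathbb{A}) = \mathbb{A}$, elementarity of $j$ applied to this pointwise identity and evaluation at $\kappa$ give
\[ j(h_\eta)(\kappa) = j(f^p_\eta)(\kappa) \wedge j(f^q_\eta)(\kappa) = F^p(\eta) \wedge F^q(\eta) = F^r(\eta), \]
as required; this only needs to be checked for $\eta \trianglerighteq \stem(T^r) = \stem(T^q)$, where both $p$ and $q$ already satisfy stabilization. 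Hence $r \in \mathbb{H}$ with $r \leq p$ and $r \leq q$. Finally, $r \in G(P, \mathcal{H})$: we have $P\restriction n \in T^p \cap T^q = T^r$, and $F^r(P\restriction n)$ is the $\mathbb{A}$-meet of two elements of the filter $H_n$, hence lies in $H_n$ by the filter property applied in $\mathbb{A}$.
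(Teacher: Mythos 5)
Your verification of upward closure and the tree part of directedness is fine, but there is a genuine gap at the heart of the directedness argument: you set $F^r(\eta)=F^p(\eta)\wedge F^q(\eta)$ for \emph{every} $\eta\in T^p\cap T^q$ and justify the existence of this meet by the hypothesis that $\mathbb{A}$ ``has unique greatest lower bounds.'' That hypothesis only provides greatest lower bounds for \emph{compatible} conditions (and for decreasing sequences); in any nontrivial separative forcing, incompatible conditions have no common lower bound at all, and the intended $\mathbb{A}$ (the club-shooting forcing $j_\omega(\mathbb{Q})^P$) certainly has incompatible conditions. Along the branch $P$ compatibility is automatic, since $F^p(P\restriction n)$ and $F^q(P\restriction n)$ both lie in the filter $H_n$; but at nodes $\eta$ of the tree above the stem that are \emph{not} initial segments of $P$, there is no reason whatsoever that $F^p(\eta)$ and $F^q(\eta)$ should be compatible, so your $F^r(\eta)$ is simply undefined there and $r$ need not exist as you describe it.

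This is exactly the point the paper's proof spends its effort on: one must first \emph{narrow the tree} so that compatibility holds everywhere on it, and this uses requirement \ref{requirement: stabilization}. The inductive step is: if $F^p(\eta)$ and $F^q(\eta)$ are compatible, then since $j(g^p_\eta)(\kappa)=F^p(\eta)$ and $j(g^q_\eta)(\kappa)=F^q(\eta)$, the set
\[\{\alpha<\kappa \mid F^p(\eta^\smallfrown\langle\alpha\rangle)\text{ is compatible with }F^q(\eta^\smallfrown\langle\alpha\rangle)\}\]
belongs to $\mathcal{U}$, so one keeps only those successors (while the base of the induction, and the fact that all initial segments of $P$ survive the pruning, come from the filters $H_n$). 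Only after this shrinking can one take pointwise greatest lower bounds, and then your final computation $j(h_\eta)(\kappa)=j(f^p_\eta)(\kappa)\wedge j(f^q_\eta)(\kappa)=F^r(\eta)$ — which agrees with the paper's verification of stabilization — goes through. So the proposal is missing the key idea rather than just a routine detail; as written, the claimed common extension $r$ need not be a condition of $\mathbb{H}$, nor even be well defined.
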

\begin{proof}
Let $p, q \in G(P, \mathcal{H})$. We want to show that they are compatible and have a common lower bound in $G(P, \mathcal{H})$. Let us assume, without loss of generality, that $\len p \leq \len q$. Then $\stem(T^p) \trianglelefteq \stem (T^q) \trianglelefteq P$. In particular, the intersection of $T^p$ and $T^q$ is a $\mathcal{U}$-branching tree. Thus we may assume without loss of generality that $T^p = T^q = T$.

Let us consider $F^p$ and $F^q$. For every element below the stem of the tree, the values of those two functions are compatible since each $H_m$ is a filter. For elements above the stem, one can show by induction on the height of $\eta \in T$, that the compatibility of $F^p(\eta)$ and $F^q(\eta)$ implies (using requirement \ref{requirement: stabilization} in the definition of the forcing) that for a large set of extension of $\eta$, $F^q(\eta^\smallfrown\langle\alpha\rangle)$ is compatible with $F^p(\eta^\smallfrown\langle\alpha\rangle)$. Moreover, by the definition of the filter $G(P,\mathcal{H})$, if $\eta$ is an initial segment of $P$ then $F^p(\eta)$ is compatible with $F^q(\eta)$. Narrowing down $T$, we may assume that for any $\eta\in T$, $F^p(\eta)$ is compatible with $F^q(\eta)$, while the initial segments of $P$ are all in $T$. Let $F(\eta)$ be the greatest lower bound of $F^p(\eta)$ and $F^q(\eta)$.  For every $\eta$, let $g_\eta(\alpha) = F(\eta^\smallfrown\langle\alpha\rangle)$ and let $g^p, g^q$ be the analogous functions with respect to $p$ and $q$. Then, $j(g_\eta)(\kappa)$ is the greatest lower bound of $j(g_\eta^p)(\kappa)$ and $j(g_\eta^q)(\kappa)$ and therefore is the same as $F(\eta)$.

We conclude that this condition satisfies requirement \ref{requirement: stabilization}. Thus, it is in $G(P,\mathcal{H})$, as wanted.
\end{proof}

We wish to generate an $M_\omega$-generic filter for $j_\omega(\mathbb{H})$.
Let $H \subseteq \mathbb{A}$ be a $V$-generic filter.  Let $\mathcal{H}_n =
\langle {<}j_{m,n}\image H {>} \mid m \leq n\rangle$ and let $\mathcal{H} =
\langle {<}j_{n,\omega}\image H{>} \mid n < \omega\rangle$. Let $P$ be the
critical sequence $\langle j_n(\kappa) \mid n < \omega\rangle$.  It is immediate
that $P,\mathcal{H} \in M_n[\mathcal{H}_n]$.

We will show that $G(P,\mathcal{H})$ is $M_\omega$-generic for
$j_\omega(\mathbb{H})$.  We start by showing that $\mathcal{H}_n$ is generic
over $M_n$.  To do so we need the following lemma which is attributed to Woodin
in a paper of Cummings, see \cite[Fact 2]{CummingsWoodin}.

\begin{lemma}[Cummings-Woodin]\label{lemma: cummings-woodin}
Let $W$ be a model of $\ZFC$. In $W$, let $\mu$ be a measurable cardinal, and let $\mathcal{U}$ be a normal measure on $\mu$. Let $j\colon W \to N$ be the ultrapower embedding. 

Let $\mathbb{A}$ be a forcing notion such that $\mathbb{A} = j(\mathbb{B})$ and $\mathbb{B}$ is a $\mu$-closed forcing notion. Let $G_A \subseteq \mathbb{A}$ be a $W$-generic filter. Then $j$ extends to an embedding: 
\[j^\star \colon W[G_A] \to N[{<}j\image G_A{>}]\]
which is definable in $W[G_A]$ 
and $G_A$ is $N[{<}j\image G_A{>}]$-generic. 
\end{lemma}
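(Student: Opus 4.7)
The plan is to verify four points in turn: (i) $\bar G := {<}j\image G_A{>}$ is a filter on $j(\mathbb{A})$; (ii) $\bar G$ is $N$-generic for $j(\mathbb{A})$; (iii) $j$ lifts to a definable elementary embedding $j^\star \colon W[G_A] \to N[\bar G]$; and (iv) $G_A$ remains $N[\bar G]$-generic for $\mathbb{A}$. Two preliminary observations will be used throughout: first, $\mathbb{A} = j(\mathbb{B})$ is $\mu^{+}$-closed in $W$, since by elementarity it is $j(\mu)$-closed in $N$ and $W$ satisfies ${}^{\mu}N \subseteq N$; second, $j$ is definable in $W$ from $\mathcal{U}$.

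Steps (i) and (iii) are routine: the filter property of $\bar G$ follows by lifting a common refinement in $G_A$ through $j$, and the lift $j^\star$ is defined in the standard way by $j^\star(\dot\tau^{G_A}) := (j\dot\tau)^{\bar G}$, with definability in $W[G_A]$ coming from the definability of $j$ and the construction of $\bar G$ from $j$ and $G_A$. Step (ii) is the heart of the standard argument. Given a dense open $D \in N$ of $j(\mathbb{A})$, I would represent $D = [F]_{\mathcal{U}}$ for some $F \colon \mu \to W$, so that by \L o\'s's theorem there is some $X \in \mathcal{U}$ on which each $F(\alpha)$ is dense open in $\mathbb{A}$. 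Using $\mu^{+}$-closure to diagonalize across $X$ yields a dense subset $D^\star \subseteq \bigcap_{\alpha \in X} F(\alpha)$ of $\mathbb{A}$ in $W$; the $W$-genericity of $G_A$ then produces $p \in G_A \cap D^\star$, and \L o\'s's theorem upgrades this to $j(p) \in D \cap \bar G$.

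The hard step will be (iv). My plan is to exploit that $j(\mathbb{A})$, as the $j$-image of a $j(\mu)$-closed forcing in $N$, is $j(j(\mu))$-closed in $N$, hence $|\mathbb{A}|^{+}$-distributive there provided $|\mathbb{A}|^{N} < j(j(\mu))$ (equivalently, $|\mathbb{B}|^{W} < j(\mu)$) --- a size hypothesis that holds in all applications envisaged in this paper. Under this bound, $\bar G$ introduces no new subsets of $\mathbb{A}$ over $N$, so every dense open subset of $\mathbb{A}$ in $N[\bar G]$ is already in $N$ and is met by $G_A$ via its $N$-genericity (inherited from its $W$-genericity). Making this distributivity step precise in the generality required, and in particular ruling out the possibility that $\bar G$ codes a new dense open subset of $\mathbb{A}$ back into $N[\bar G]$, is the main technical obstacle.
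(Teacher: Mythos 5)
Your steps (i)--(iii) are fine and are the standard argument (note the paper does not prove this lemma; it quotes it from Cummings' paper, \cite[Fact 2]{CummingsWoodin}). The genuine gap is exactly where you flag it, in (iv), and the route you propose does not work. First, the closure claim is miscalculated: elementarity of $j\colon W\to N$ can only be applied to what $W$ believes, and $W$ only believes that $\mathbb{A}=j(\mathbb{B})$ is $\mu^{+}$-closed; hence $N$ believes that $j(\mathbb{A})$ is $j(\mu^{+})=(j(\mu)^{+})^{N}$-closed. The $j(j(\mu))$-closure of $j(\mathbb{A})$ holds in the second ultrapower, not in $N$. Second, even after this repair the distributivity you get over $N$ only rules out new sequences of length $\leq j(\mu)$, while $|\mathbb{A}|^{N}=j(|\mathbb{B}|^{W})\geq j(\mu^{+})>j(\mu)$ whenever $\mathbb{B}$ is a nontrivial $\mu$-closed forcing (and in the intended application $|\mathbb{B}|$ is computed in $W=M_n$ to be well above $\mu=j_n(\kappa)$); so you cannot conclude that $N[{<}j\image G_A{>}]$ has no new dense open subsets of $\mathbb{A}$, and your auxiliary size hypothesis $|\mathbb{B}|^{W}<j(\mu)$ (which is not part of the lemma) does not close this gap. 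Note also that your target claim is delicate for a structural reason: the conclusion of the lemma in particular asserts $G_A\notin N[{<}j\image G_A{>}]$, which a pure distributivity argument of the kind you envisage cannot deliver in this generality.

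The correct argument (Woodin's, as in Cummings) replaces the distributivity step by a density argument in $W$ that directly coordinates the two filters. Fix $\dot D\in N$ a $j(\mathbb{A})$-name forced to be dense open in $\check{\mathbb{A}}$, and write $\dot D=[\alpha\mapsto F(\alpha)]_{\mathcal{U}}$ where, for almost all $\alpha$, $F(\alpha)$ is an $\mathbb{A}$-name for a dense open subset of $\mathbb{B}$. It suffices to show that $E=\{t\in\mathbb{A} : j(t)\Vdash^{N}_{j(\mathbb{A})}\check t\in\dot D\}$ is dense in $\mathbb{A}$: if $t\in G_A\cap E$ then $j(t)\in{<}j\image G_A{>}$ forces $\check t\in\dot D$, so $t\in G_A\cap \dot D^{{<}j\image G_A{>}}$. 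For density below a given $s$, build in $W$ a decreasing sequence $\langle t^{\alpha} : \alpha<\mu\rangle$ in $\mathbb{A}$ with $t^{0}=s$, carrying along representing functions $\vec t^{\alpha}$ chosen coordinatewise decreasing (patching on null sets, with limits taken coordinatewise using the $\mu$-closure of $\mathbb{B}$); at stage $\alpha$ extend $t^{\alpha}$ to $t^{\alpha+1}$ deciding some $b_{\alpha}\leq t^{\alpha}_{\alpha}$ with $t^{\alpha+1}\Vdash \check b_{\alpha}\in F(\alpha)$, and set the $\alpha$-th coordinate of $\vec t^{\alpha+1}$ to be $b_{\alpha}$. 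The diagonal $t=[\alpha\mapsto b_{\alpha}]_{\mathcal{U}}$ is then (by the coordinatewise monotonicity and the fact that tails of $\mu$ lie in $\mathcal{U}$) a lower bound of all the $t^{\alpha}$, hence $t\leq s$ and $t\Vdash\check b_{\alpha}\in F(\alpha)$ for every $\alpha$, which by \L o\'s is exactly $j(t)\Vdash\check t\in\dot D$. Equivalently, this shows ${<}j\image G_A{>}\times G_A$ is $N$-generic for $j(\mathbb{A})\times\mathbb{A}$, which yields the last clause of the lemma; the essential point you are missing is that the $\mu$-closure of $\mathbb{B}$ itself (not just of $\mathbb{A}$, and not any closure of $j(\mathbb{A})$ over $N$) is what powers this diagonalization.
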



\begin{claim}\label{claim: Hn is generic} $\mathcal{H}_n$
generates a generic filter for $\mathbb{A}^{n+1}$ over $M_n$.  \end{claim}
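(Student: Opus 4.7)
The plan is to induct on $n$. The base case $n = 0$ is immediate: $\mathcal{H}_0 = \langle H\rangle$ and $H$ is $V$-generic for $\mathbb{A}$, so it generates an $M_0$-generic filter for $\mathbb{A}^1$.

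For the inductive step, assume $\mathcal{H}_n = \langle {<}j_{m,n}\image H{>}\mid m \leq n\rangle$ generates an $M_n$-generic filter for $\mathbb{A}^{n+1}$. I would apply Lemma \ref{lemma: cummings-woodin} inside $M_n$ with the embedding $j_{n,n+1}\colon M_n \to M_{n+1}$ (the ultrapower by $j_n(\mathcal{U})$, whose critical point is $j_n(\kappa)$) and with $\mathbb{B} = \mathbb{A}^{n+1}$, $\mu = j_n(\kappa)$. The hypotheses are verified using $j_n(\mathbb{A}) = \mathbb{A}$ together with the fact that $\mathbb{A}$ has a $\kappa^{+}$-closed dense subset in $V$: by elementarity $\mathbb{A}$ has a $j_n(\kappa^{+})$-closed dense subset in $M_n$, so $\mathbb{A}^{n+1}$ is $j_n(\kappa)$-closed there; and $j_{n,n+1}(\mathbb{A}^{n+1}) = \mathbb{A}^{n+1}$ since $j_{n,n+1}$ fixes both $\mathbb{A}$ (as $j_{n+1}(\mathbb{A}) = j_{n,n+1}(j_n(\mathbb{A})) = j_{n,n+1}(\mathbb{A})$) and the natural number $n+1$.

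Cummings-Woodin then gives two conclusions: (i) ${<}j_{n,n+1}\image \mathcal{H}_n{>}$ is $M_{n+1}$-generic for $\mathbb{A}^{n+1}$, and (ii) $\mathcal{H}_n$ is $M_{n+1}[{<}j_{n,n+1}\image \mathcal{H}_n{>}]$-generic for $\mathbb{A}^{n+1}$. A short coordinatewise computation, using the factorization $j_{m,n+1} = j_{n,n+1}\circ j_{m,n}$ and that upward closure factors through the product, shows
\[{<}j_{n,n+1}\image \mathcal{H}_n{>} = \langle {<}j_{m,n+1}\image H{>} \mid m \leq n\rangle,\]
which is exactly the initial segment of $\mathcal{H}_{n+1}$ consisting of its first $n+1$ entries.

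To incorporate the final entry $H_{n+1}^{(n+1)} = H$, observe that $H$ is also the last coordinate of $\mathcal{H}_n$ (since $H_n^{(n)} = H$). Projecting the product-generic conclusion in (ii) to the last coordinate gives that $H$ is $M_{n+1}[\langle {<}j_{m,n+1}\image H{>}\mid m \leq n\rangle]$-generic for $\mathbb{A}$; combining with (i) via the product forcing theorem yields that $\mathcal{H}_{n+1}$ generates an $M_{n+1}$-generic filter for $\mathbb{A}^{n+2}$, closing the induction. The main obstacle I anticipate is in verifying that the closure hypothesis of Cummings-Woodin really holds for $\mathbb{A}^{n+1}$ as interpreted inside $M_n$; it is precisely here that the assumption $j_n(\mathbb{A}) = \mathbb{A}$ for all $n$ does the essential work.
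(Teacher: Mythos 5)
Your proposal is correct and follows essentially the same route as the paper: induction on $n$, applying the Cummings--Woodin lemma to $j_{n,n+1}$ with $\mathbb{B}=\mathbb{A}^{n+1}$ and $G_A=\mathcal{H}_n$, identifying ${<}j_{n,n+1}\image\mathcal{H}_n{>}$ with the coordinates of $\mathcal{H}_{n+1}$ other than $H$, and then using the mutual-genericity part of the lemma together with the product forcing theorem to absorb $H$ and conclude that $\mathcal{H}_{n+1}$ is $M_{n+1}$-generic for $\mathbb{A}^{n+2}$. The only discrepancy with the paper is a harmless difference in how the coordinates of $\mathcal{H}_{n+1}$ are indexed, and your explicit verification of the closure hypothesis via $j_n(\mathbb{A})=\mathbb{A}$ is exactly the intended use of that assumption.
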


\begin{proof} We go by induction on $n<\omega$.  For $n = 0$, this is true as $H$ is a $V$-generic for $\mathbb{A}$.  Assume that the claim holds for $n < \omega$. 

Consider the elementary embedding:
\[j_{n,n+1}\colon M_n \to M_{n+1}\] 
This is an ultrapower embedding, using the measure $j_n(\mathcal{U})$ over
$M_n$.  We apply Lemma \ref{lemma: cummings-woodin} with $W = M_n$, $\mu = j_n(\kappa)$, $j = j_{n,n+1}$, $\mathbb{B} = \mathbb{A}^{n+1}$, and $G_A = \mathcal{H}_n$. We conclude that there is an elementary embedding:
\[j_{n,n+1}^\star\colon M_n[\mathcal{H}_n] \to M_{n+1}[{<}j_{n,n+1}\image \mathcal{H}_n{>}].\] 
By the definition of $\mathcal{H}_n$, ${<}j_{n,n+1}\image \mathcal{H}_n{>} = \mathcal{H}_{n+1}\restriction [1, n+1]$: the last $n+1$ coordinates of $\mathcal{H}_{n+1}$.

By the second part of the lemma, $\mathcal{H}_n$ is $M_{n+1}[\mathcal{H}_{n+1}\restriction [1, n+1]]$-generic. In particular, $H$ is generic over this model. Since $H$ is the first component of $\mathcal{H}_{n+1}$ we conclude that $\mathcal{H}_{n+1}$ is $M_{n+1}$-generic for the forcing $\mathbb{A}^{n+2}$. \end{proof}

\begin{theorem}\label{theorem: cal H is generic}
$G(P,\mathcal{H})$ is $M_{\omega}$-generic for $j_{\omega}(\mathbb{H})$.
\end{theorem}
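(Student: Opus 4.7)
Following the template of Lemma \ref{lemma: prikry generic over Momega}, I would verify that $G(P, \mathcal{H})$ meets every dense open subset $D \in M_\omega$ of $j_\omega(\mathbb{H})$. Fix such $D$ and a condition $p_0 \in G(P, \mathcal{H})$; the task is to produce $q \in D \cap G(P, \mathcal{H})$ with $q \leq p_0$. By elementarity, the Strong Prikry Property for $\mathbb{H}$ holds in $M_\omega$ applied to $j_\omega(\mathbb{H})$, so invoking it on $p_0$ and $D$ yields a direct extension $p^\star \leq^\star p_0$ and a natural number $n < \omega$ such that every $r \leq p^\star$ with $\len r \geq n$ lies in $D$.

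A direct check shows that $G(P, \mathcal{H})$ is closed under one-step stem extensions along $P$: given $p \in G(P, \mathcal{H})$ of length $k$, the condition $p'$ with $\stem(T^{p'}) = P \restriction (k+1)$, $T^{p'} = \{\eta \in T^p : \eta$ comparable with $P \restriction (k+1)\}$, and $F^{p'}$ inherited from $F^p$, is again in $G(P, \mathcal{H})$. Iterating this observation, we may arrange $\len p_0 \geq n$ before invoking Strong Prikry, so that $\len p^\star \geq n$ as well. The plan is then to keep $T^q = T^{p^\star}$ and to strengthen $F^{p^\star}$ along the branch $P$: for each $k$, pick $a_k \in H_k$ below $F^{p^\star}(P \restriction k)$, whose existence follows by tracing $F^{p^\star}(P \restriction k)$ back through the direct limit $M_\omega$ to some $M_m$ and applying Claim \ref{claim: Hn is generic} to find a compatible element in the relevant coordinate of $\mathcal{H}_m$. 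To preserve the stabilization requirement at each $P$-node after the change, one must then cascade adjustments onto a $j_\omega(\mathcal{U})$-measure-one set of off-branch successors, taking greatest lower bounds in the $\kappa^+$-closed dense subset of $\mathbb{A}$.

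The main technical obstacle is twofold. First, one must verify that the critical sequence $P$ is indeed a branch of $T^{p^\star}$; this uses the tail property that every $A \in j_\omega(\mathcal{U})$ contains a tail of $P$, together with the specific structure of $T^{p^\star}$ coming from the Strong Prikry construction, and is the reason for arranging $\len p_0$ to be large. Second, one must coordinate the cascading modifications to $F^q$ at off-branch nodes so that the stabilization requirement is preserved everywhere while $F^q \leq F^{p^\star}$ pointwise throughout. Both issues are handled by iterative closure arguments in the spirit of Lemma \ref{lemma: G P calH is filter} and the proof of the Strong Prikry Property for $\mathbb{H}$, leveraging the $\kappa^+$-closure of $\mathbb{A}$ and the normality of $j_\omega(\mathcal{U})$ in $M_\omega$.
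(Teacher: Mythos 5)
There is a genuine gap at the heart of your plan. After you apply the Strong Prikry Property inside $M_\omega$ to $p_0$ and $D$, the direct extension $p^\star \leq^\star p_0$ is produced abstractly (by a recursion of length $j_\omega(\kappa)$ in $M_\omega$) and bears no relation to $\mathcal{H}$; in general $p^\star \notin G(P,\mathcal{H})$, and no further strengthening of $F^{p^\star}$ along the branch can repair this. Membership in $G(P,\mathcal{H})$ requires $F^q(P\restriction k) \in H_k = {<}j_{k,\omega}\image H{>}$, i.e.\ the branch value must lie \emph{above} some $j_{k,\omega}(h)$ with $h \in H$. Your step ``pick $a_k \in H_k$ below $F^{p^\star}(P\restriction k)$'' is unjustified: knowing only $F^{p^\star}(P\restriction k) \leq F^{p_0}(P\restriction k) \in H_k$ gives no element of $H_k$ below it (an upward-closed filter coming from a generic is not downward dense below arbitrary extensions of its members), and even if such an $a_k$ existed, taking greatest lower bounds with it would produce a value \emph{below} $H_k$, not \emph{in} $H_k$. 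Likewise, tracing back to $M_m$ and invoking Claim \ref{claim: Hn is generic} can at best give compatibility with $\mathcal{H}_m$, which is not membership; so the amended condition still fails the defining requirement of $G(P,\mathcal{H})$, and the cascading off-branch corrections do not address this.

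The paper's proof avoids exactly this quantifier trap: it does not fix a single $p^\star$ and then try to reconcile it with $\mathcal{H}$, but instead chooses the Strong Prikry witness \emph{using} $\mathcal{H}$. Concretely, it pulls $D$ and $p$ back to some $M_n$ (so $D = j_{n,\omega}(D')$, $p = j_{n,\omega}(p')$), forms in $M_n$ the $\leq^\star$-dense set $E$ of conditions $p^\star \leq^\star p'$ admitting a Strong Prikry number $m$ for $D'$, observes that the stems of members of $E$ form a dense subset of $\mathbb{A}^{n+1}$, and applies the genericity of $\mathcal{H}_n$ over $M_n$ (Claim \ref{claim: Hn is generic}) to find $r \in E$ whose stem lies in $\mathcal{H}_n$. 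The stabilization requirement \ref{requirement: stabilization} then shows (after shrinking the tree, using that the critical sequence enters every $\mathcal{U}$-splitting tree from $M_n$) that the branch values of $j_{n,\omega}(r)$ beyond the stem are images of a single $h \in H$, hence $j_{n,\omega}(r) \in G(P,\mathcal{H})$; extending along $P$ by $m$ more points lands in $D$. If you reorganize your argument so that the member of the $\leq^\star$-dense set is selected via the genericity of $\mathcal{H}_n$ at a finite stage, rather than fixed in $M_\omega$ beforehand, you recover the paper's proof.
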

\begin{proof}
Let $D\in M_{\omega}$ be dense open and let $p\in G(P,\mathcal{H})$. Let $n < \omega$ be large enough so that there is $D' \subseteq j_n(\mathbb{H})$ and $p' \in j_n(\mathbb{H})$ such that $D = j_{n,\omega}(D')$, $p = j_{n,\omega}(p')$. Without loss of generality, $\len p = n$.

Let
\[E = \{p^\star \in j_n(\mathbb{H}) \mid p^\star \leq^\star p',\, \exists m < \omega, \forall r\leq p^\star, \len r \geq m\implies r\in D'\}.\]
By the Strong Prikry Property in $M_n$, $E$ is dense and open in $\leq^\star$.

Let us claim that there is a condition $r\in E$ such that 
$j_{n,\omega}(r) \in G(P,\mathcal{H})$. Indeed, since $E$  is dense open in the
direct extension relation, the collection of stems of elements of $E$ is dense
open in the forcing $\mathbb{A}^{n+1}$. By the genericity of $\mathcal{H}_n$,
there is $s\in \mathcal{H}_n$ which is a stem of an element in $E$. Let $r$ be
any element in $E$ with $\stem(r) = s$. Let us claim that $j_{n,\omega}(r)\in
G(P,\mathcal{H})$. Indeed, working in $V$ fix a condition $h\in\mathbb{H}$ such
that $s' = \langle j_{m,n}(h) \mid m \leq n\rangle$ is stronger than $s$. Let
$r' = \langle T', F'\rangle$ be any condition of $\mathbb{H}$ such that
$\stem(r') = s'$. Then, by induction on $k < \omega$, \[j_{n, n +
k}(F')(\stem(T') ^\smallfrown \langle j_n(\kappa), \dots, j_{n + k -
1}(\kappa)\rangle) = h.\] We conclude that $j_{n,\omega}(F')(\langle \kappa,
\dots, j_{k-1}(\kappa)\rangle) \in j_{k,\omega}\image H$ for all $k$. Thus,
$j_{n,\omega}(r')\in G(P,\mathcal{H})$. By the arguments of Lemma \ref{lemma: G P calH is filter},  
by narrowing down the tree of $r'$ in $M_n$, we obtain a condition $r'' \leq^\star r'$, $F^{r''} = F^{r'} \restriction T^{r''}$ and $r'' \leq r$. Since the critical sequence starting from $n$ enters any $\mathcal{U}$-splitting tree from $M_n$, $j_{n,\omega}(r'')\in G(P,\mathcal{H})$ as well. 

We conclude that $j_{n,\omega}(r)\in G(P,\mathcal{H})$. Let $m< \omega$ be the natural number that witnesses $r\in E$. I.e.\ for all $q \leq r$ with $\len q \geq m$, $q\in D'$. By elementarity, any extension of $j_{n,\omega}(r)$ of length $m$ is in $D$. In particular, $G(P,\mathcal{H}) \cap D \neq \emptyset$.
\end{proof}

\begin{lemma}\label{lemma: distributivity of cal H}
$M_{\omega}[P][\mathcal{H}]$ has the same $j_\omega(\kappa)$-sequences of ordinals as $M_{\omega}[P]$.
\end{lemma}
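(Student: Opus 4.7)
The plan is to lift the iterated ultrapower $j_\omega$ across the $\mathbb A$-generic $H$ and exploit the resulting elementarity via a nice-name argument over the Prikry projection of $j_\omega(\mathbb H)$.

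First, I iterate the Cummings-Woodin Lemma (Lemma~\ref{lemma: cummings-woodin}) through the ultrapower embeddings $j_{n,n+1}\colon M_n\to M_{n+1}$ and take the direct limit, obtaining an elementary embedding
\[
 j_\omega^\star\colon V[H]\to M_\omega[\mathcal H]
\]
that extends $j_\omega$ and is definable in $V[H]$. Since $\mathbb A$ has a $\kappa^+$-closed dense subset in $V$, $V[H]$ has the same $\kappa$-sequences of ordinals as $V$. By elementarity of $j_\omega^\star$, $({}^{j_\omega(\kappa)}\Ord)^{M_\omega[\mathcal H]} = ({}^{j_\omega(\kappa)}\Ord)^{M_\omega}$.

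Next, I view $M_\omega[P][\mathcal H]$ as $M_\omega[\mathcal H][P]$ via a factorization of $j_\omega(\mathbb H)$ in $M_\omega$. The projection $(T,F)\mapsto T$ gives a Prikry-like forcing $\mathbb P_{\mathrm{Pr}}\in M_\omega$ of size $j_\omega(\kappa)$ which is $j_\omega(\kappa)^+$-cc, while the complementary ``$\mathbb A$-part'' adding $\mathcal H$ is $j_\omega(\kappa)^+$-closed in $M_\omega$ by elementarity from the $\kappa^+$-closure of $\mathbb A$ in $V$. Easton's lemma then ensures that $P$ is $M_\omega[\mathcal H]$-generic for $\mathbb P_{\mathrm{Pr}}$. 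Given a $j_\omega(\kappa)$-sequence of ordinals $s\in M_\omega[P][\mathcal H]$, it has a nice $\mathbb P_{\mathrm{Pr}}$-name $\dot s$ in $M_\omega[\mathcal H]$; by the $j_\omega(\kappa)^+$-cc of $\mathbb P_{\mathrm{Pr}}$, $\dot s$ is coded by a $j_\omega(\kappa)$-sequence of ordinals in $M_\omega[\mathcal H]$, which lies in $M_\omega$ by the elementarity observation above. Hence $\dot s\in M_\omega$, and $s=\dot s_P\in M_\omega[P]$.

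The main obstacle is establishing the factorization rigorously: verifying that the $\mathbb A$-part really is $j_\omega(\kappa)^+$-closed in $M_\omega$ (where the stabilization clause in the definition of $\mathbb H$ is the delicate point) and that Easton's lemma applies to yield the required mutual genericity between $P$ and $\mathcal H$.
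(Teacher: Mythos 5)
Your argument hinges on a factorization that you flag as "the main obstacle," and that obstacle is in fact the entire content of the lemma; it is not established and, in the form stated, it is not available. The forcing $j_\omega(\mathbb{H})$ does project onto the tree/Prikry part via $(T,F)\mapsto T$, but the quotient adding $\mathcal{H}$ is then a forcing over $M_\omega[P]$, not over $M_\omega$: because of the stabilization clause, the filters $H_n$ are read off as $\{F^p(P\restriction n)\mid p\in G\}$, i.e.\ they are defined from the branch, so there is no complementary subforcing of $M_\omega$ whose generic is $\mathcal{H}$ (this is exactly the situation of Prikry forcing with interleaved collapses, which does not factor as ``collapses first, Prikry points second''). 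Moreover, in the intended application $\mathbb{A}=j_\omega(\mathbb{Q})^P$ is only an element of $M_\omega[P]$, so ``the $\mathbb{A}$-part is $j_\omega(\kappa)^+$-closed in $M_\omega$ by elementarity'' does not even parse as a statement about a poset of $M_\omega$. Finally, Easton's lemma cannot ``ensure'' that $P$ is generic over $M_\omega[\mathcal{H}]$: it preserves chain conditions and distributivity for product forcing, but it does not produce mutual genericity of two externally given filters, and what you are handed (Theorem \ref{theorem: cal H is generic}) is genericity of $G(P,\mathcal{H})$ for the Prikry-type forcing $j_\omega(\mathbb{H})$, not for any product of a $j_\omega(\kappa)^{+}$-cc poset with a $j_\omega(\kappa)^{+}$-closed poset of $M_\omega$.

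There is also a problem in your first step: iterating Lemma \ref{lemma: cummings-woodin} along the $j_{n,n+1}$ and taking the direct limit lifts $j_\omega$ to an embedding into $M_\omega[{<}j_{0,\omega}\image H{>}]$, the extension by the \emph{single} filter generated by $j_\omega\image H$, not into $M_\omega[\mathcal{H}]$; the sequence $\mathcal{H}=\langle {<}j_{n,\omega}\image H{>}\mid n<\omega\rangle$ carries far more information than its first coordinate, so the conclusion $({}^{j_\omega(\kappa)}\Ord)^{M_\omega[\mathcal{H}]}=({}^{j_\omega(\kappa)}\Ord)^{M_\omega}$ does not follow from that elementarity. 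The paper avoids all of this by never analyzing $\mathcal{H}$ over $M_\omega$ at all: since $j_\omega(\kappa)$ is singular in $M_\omega[P]$ it suffices to handle sequences of length $\rho<j_n(\kappa)$ for large $n$; any such sequence lies in $M_n[\mathcal{H}_n]$ because $M_\omega[P][\mathcal{H}]\subseteq M_n[\mathcal{H}_n]$; Claim \ref{claim: Hn is generic} gives that $\mathcal{H}_n$ is $M_n$-generic for the \emph{finite} product $\mathbb{A}^{n+1}$, which is $j_n(\kappa)^{+}$-closed in $M_n$, so the sequence is already in $M_n$; and Lemma \ref{lemma: MomegaP is closed} applied in $M_n$ places it in $M_\omega[P]$. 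If you want to salvage your approach, you would have to prove the mutual genericity of $P$ and $\mathcal{H}$ over $M_\omega$ for an explicit product decomposition, and nothing in the construction of $\mathcal{H}$ from a single $V$-generic $H$ supports that.
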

\begin{proof}
Since $j_{\omega}(\kappa)$ is singular in $M_{\omega}[P]$, it is enough to show that there is no new sequence of ordinals in $M_{\omega}[P][\mathcal{H}]$ of length $\rho < j_{\omega}(\kappa)$.

Let us fix $n < \omega$ large enough so that $\rho < j_n(\kappa)$. $M_{\omega}[P]$ and $M_n$ have the same $j_n(\kappa)$-sequences of ordinals by Lemma \ref{lemma: MomegaP is closed}. Let us assume that there is a $\rho$-sequence of ordinals, \[x \in M_{\omega}[P][\mathcal{H}] \setminus M_{\omega}[P].\]

Recall that $\mathcal{H}_n = \langle {<}j_{m,n}\image H{>} \mid m \leq
n\rangle$. By Claim \ref{claim: Hn is generic}, $\mathcal{H}_n$ generates a generic filter for
$\mathbb{A}^{n+1}$ which is a $j_n(\kappa^{+})$-closed forcing in $M_n$. In particular, since $\mathcal{H}_n$ is an $M_n$-generic filter for a $j_n(\kappa^{+})$-distributive forcing, $M_n$ has the same $\rho$-sequences as $M_n[\mathcal{H}_n]$ and in particular, any $\rho$-sequence in $M_\omega[P][\mathcal{H}]$ belongs to $M_n$. But since $M_{\omega}[P]$ contains all $j_n(\kappa)$ sequence of ordinals from $M_n$ (by applying Lemma \ref{lemma: MomegaP is closed} in $M_n$), we conclude that $x\in M_{\omega}[P]$.
\end{proof}

Let us give another argument for the distributivity of the extension by $\mathcal{H}$. 
\begin{definition}\label{definition: pure extension of H}
Let $p, q\in \mathbb{H}$. $p \leq^{\star\star} q$ if there is a $\mathcal{U}$-splitting tree $T$ with $\stem(T) = \stem(T^p) = \stem(T^q)$ and $p' = \langle T, F^p\restriction T\rangle \leq^{\star} q$. 
\end{definition}
Using diagonal intersections, the partial order $\leq^{\star\star}$ is $\kappa^{+}$-closed. Let $\langle D_i \mid i < \kappa\rangle$ be a sequence of dense open subsets of $\mathbb{H}$. Using the Strong Prikry Property, we can construct a sequence of conditions $\langle p_i \mid i < \kappa\rangle$ which is decreasing in $\leq^\star$. Let $p_\kappa$ be a $\leq^{\star\star}$-lower bound for the sequence of $p_i$-s. For every $\alpha < \kappa$, there is a natural number $n_\alpha$ such that for every increasing sequence $s\in T^{p_\kappa}$, with $\len s \geq n_\alpha$, $\max s \geq \alpha$, the condition $p' = \langle T^{p_\kappa}_{s}, F^{p_\kappa} \restriction T^{p_\kappa}_s\rangle$ is in $D_\alpha$ (where $T_s$ is the restriction of the tree $T$ to its elements above $s$). This implies that for any name for a $\kappa$-sequence of ordinals there is a condition that forces it to be equivalent to a name relative to the Prikry forcing.

Recall that $\mathbb{P}$ is the Prikry forcing with interleaved collapses and that $G$ is an $M_{\omega}$-generic filter for $j_{\omega}(\mathbb{P})$.

\begin{claim}
In $M_\omega$, $P$ and $\mathcal{H}$ generate a generic filter for
$j_\omega(\mathbb{H})$ which is mutually generic to the quotient forcing for adding collapses over the standard Prikry forcing. Moreover, the filter $\mathcal{H}$ does not add any $j_\omega(\kappa)$-sequences of ordinals over $M_{\omega}[G]$.
\end{claim}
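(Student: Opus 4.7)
My plan is to combine Theorem~\ref{theorem: cal H is generic} with Easton's lemma applied inside $M_{\omega}[P]$. Theorem~\ref{theorem: cal H is generic} already yields that $G(P,\mathcal{H})$ is $M_{\omega}$-generic for $j_{\omega}(\mathbb{H})$, so the remaining content of the claim is the mutual genericity with the collapse quotient of $G$ and the distributivity of $\mathcal{H}$ over $M_{\omega}[G]$.

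I would work in $M_{\omega}[P]$ and factor both generics over the pure Prikry generic $P$. Write $G = P \ast G_{\mathbb{C}}$, where $G_{\mathbb{C}}$ is generic for the collapse quotient $\mathbb{C}$; since $j_{\omega}(\mathbb{P})$ is $j_{\omega}(\kappa^{+})$-c.c.\ in $M_{\omega}$, the quotient $\mathbb{C}$ is $j_{\omega}(\kappa^{+})$-c.c.\ in $M_{\omega}[P]$. Let $\mathbb{H}'$ denote the quotient forcing over $M_{\omega}[P]$ that introduces $\mathcal{H}$. By Lemma~\ref{lemma: distributivity of cal H}, $\mathbb{H}'$ is $j_{\omega}(\kappa^{+})$-distributive in $M_{\omega}[P]$: the lemma gives no new sequences of length $j_{\omega}(\kappa)$, and since $j_{\omega}(\kappa)$ is singular in $M_{\omega}[P]$ the same conclusion follows for all shorter lengths.

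Applying Easton's lemma in $M_{\omega}[P]$ with $\lambda = j_{\omega}(\kappa^{+})$, the $\lambda$-distributive forcing $\mathbb{H}'$ and the $\lambda$-c.c.\ forcing $\mathbb{C}$ commute: their generics are mutually generic and each preserves the other's key property. Hence $\mathcal{H}$ is $M_{\omega}[P][G_{\mathbb{C}}] = M_{\omega}[G]$-generic for $\mathbb{H}'$, which yields the mutual genericity, and $\mathbb{H}'$ retains $\lambda$-distributivity over $M_{\omega}[G]$, which yields that $\mathcal{H}$ adds no new $j_{\omega}(\kappa)$-sequences of ordinals. The main technical obstacle is to pin down the chain condition of $\mathbb{C}$ and the distributivity level of $\mathbb{H}'$ precisely inside $M_{\omega}[P]$, lining them up at $j_{\omega}(\kappa^{+})$ even though $j_{\omega}(\kappa)$ is singular there; once this is done, the Easton argument delivers both statements simultaneously.
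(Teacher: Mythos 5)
Your overall decomposition (factor $G$ as the Prikry sequence $P$ followed by the collapse quotient $\mathbb{C}$, then play the distributivity of the $\mathcal{H}$-part against the chain condition of $\mathbb{C}$) is the same as the paper's, but the step where you "apply Easton's lemma" is a genuine gap. Easton's lemma needs the second forcing to be $\lambda$-\emph{closed} (or at least strategically closed), not merely $\lambda$-distributive, and the quotient $\mathbb{H}'=j_\omega(\mathbb{H})/P$ is a Prikry-type forcing that is only distributive. Concretely, two transfers you assert do not follow from what you have: (i) to conclude that every maximal antichain of $\mathbb{C}$ computed in $M_\omega[P][\mathcal{H}]$ already lies in $M_\omega[P]$, you need $\mathbb{C}$ to be $j_\omega(\kappa^{+})$-cc \emph{in} $M_\omega[P][\mathcal{H}]$, and chain conditions are not in general preserved by distributive forcing; (ii) the statement that $\mathbb{H}'$ "retains $\lambda$-distributivity over $M_\omega[G]$" is exactly the Easton direction whose proof builds a decreasing sequence of conditions, i.e.\ uses closure, so it cannot be quoted here. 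The paper repairs (i) by using a stronger, upward-absolute property: the collapse quotient over the Prikry sequence is $j_\omega(\kappa)$-centered (conditions with the same stem are compatible), and centeredness is witnessed by a partition in the ground model, so $\mathbb{C}$ remains $j_\omega(\kappa^{+})$-cc in the extension by $\mathcal{H}$; combined with the distributivity of $\mathbb{H}'$ over $M_\omega[P]$ (Lemma \ref{lemma: distributivity of cal H}, applied below a condition by elementarity), every maximal antichain of $\mathbb{C}$ in $M_\omega[P][\mathcal{H}]$ has size at most $j_\omega(\kappa)$ and hence lies in $M_\omega[P]$, which gives the mutual genericity.

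For (ii), the paper does not transfer distributivity across the cc forcing at all; instead it proves the second assertion by going back to the finite iterates: using $M_\omega[G][\mathcal{H}]\subseteq\bigcap_{n<\omega}M_n[G\restriction n+1][\mathcal{H}_n]$ (as in Lemma \ref{lemma: Momega H is intersection}), the fact that the forcing adding $G\restriction n+1$ has size $j_n(\kappa)$ (so is $j_n(\kappa)^{+}$-cc), and the fact that $\mathbb{A}^{n+1}$ is genuinely $j_n(\kappa^{+})$-closed in $M_n$, one can apply the honest Easton lemma inside each $M_n$, trace any name for a short sequence of ordinals down to $M_n$, and conclude it already appears in the ground model of the $\mathcal{H}$-part. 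So your argument needs two repairs: replace "cc in $M_\omega[P]$ plus Easton" by the absoluteness of centeredness of the collapse quotient, and replace the claimed preservation of distributivity over $M_\omega[G]$ by the level-by-level argument through the $M_n$, where actual closure is available.
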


\begin{proof}
Work over $V$. Let $\tilde{P}$ be a $V$-generic Prikry sequence. By elementarity, there is a condition in $\mathbb{H} / \tilde{P}$ that forces that any $\kappa$-sequence of ordinals in the generic extension is already in $V[\tilde{P}]$. The quotient forcing for adding the interleaved collapses over the Prikry sequence, $\mathbb{P} / \tilde{P}$ is $\kappa$-centered and in particular $\kappa^{+}$-cc, also in the extension by $\mathbb{H}$.

Thus, by the distributivity of $\mathbb{H} / \tilde{P}$ over $V[\tilde{P}]$, every maximal antichain of $\mathbb{P} / \tilde{P}$ belongs to $V[\tilde{P}]$. Therefore if $G\subseteq\mathbb{P}$ is a $V$-generic and $\tilde{\mathcal{H}}\subseteq \mathbb{H} / \tilde{P}$ is $V[\tilde{P}]$-generic, then it is also $V[G]$-generic.

The arguments for the distributivity of $j_{\omega}(\mathbb{H}) / P$ over
$M_{\omega}[G]$ are the same as in Lemma \ref{lemma: distributivity of cal H}.
Using the fact that $M_{\omega}[G][\mathcal{H}] \subseteq \bigcap_{n < \omega} M_n[G \restriction n + 1][\mathcal{H}_n]$ and that the forcing that adds $G \restriction n + 1$ is $j_n(\kappa)$-cc, we can trace back any name for a new sequence of ordinals which is shorter than $j_\omega(\kappa)$ to one of the $M_n$ and use the distributivity of $\mathbb{A}^{n+1}$ in $M_n$ in order to conclude that this name appears already in $M_{\omega}[P]$. 
\end{proof}

The following lemma is a generalization of the classical theorem of Bukovsk\'{y} \cite{Bukovsky1977} and independently Dehornoy \cite{Dehornoy1978}. We will follow Bukovsk\'{y}'s proof. 
\begin{lemma}\label{lemma: Momega H is intersection}
$M_{\omega}[P][\mathcal{H}] = \bigcap_{n < \omega} M_n[\mathcal{H}_n]$.
\end{lemma}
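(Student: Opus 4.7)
We prove both inclusions.

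\emph{Inclusion $\subseteq$.} For each $n$, I establish $M_\omega[P][\mathcal{H}] \subseteq M_n[\mathcal{H}_n]$ from three ingredients. First, $M_\omega \subseteq M_n$: the $\omega$-iteration internal to $M_n$ by $j_n(\mathcal{U}) \in M_n$ reproduces $M_{n+1}, M_{n+2}, \ldots$ and their direct limit $M_\omega$ as definable inner models of $M_n$. Second, $P \in M_n$: the initial segment $P \restriction n$ is trivially in $M_n$, while the tail is obtained in $M_n$ by replacement applied to the critical points of this internal iteration. Third, $\mathcal{H} \in M_n[\mathcal{H}_n]$: since $j_{n,n} = \mathrm{id}$, the $n$-th coordinate of $\mathcal{H}_n$ is $H$ itself, so $H \in M_n[\mathcal{H}_n]$; from $H$ one then recovers each $\mathcal{H}(m) = {<}j_{m,\omega}\image H{>}$ via the internally definable embeddings.

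\emph{Inclusion $\supseteq$.} Following Bukovsk\'y, take $x \in \bigcap_n M_n[\mathcal{H}_n]$; by induction on rank I may assume $x \subseteq \Ord$. The goal is to build a $j_\omega(\mathbb{H})$-name $\dot{\tau} \in M_\omega$ for $x$ so that evaluating $\dot{\tau}$ at the $M_\omega$-generic filter $G(P,\mathcal{H})$ of Theorem \ref{theorem: cal H is generic} recovers $x$, yielding $x \in M_\omega[P][\mathcal{H}]$. For each $n$, I would fix an $\mathbb{A}^{n+1}$-name $\dot{\tau}_n \in M_n$ for $x$ relative to $\mathcal{H}_n$, and then argue that these names form a coherent tower under the embeddings $j_{n,n+1}$, so they determine a direct-limit element $\dot{\tau} \in M_\omega$. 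Concretely, for each ordinal $\alpha$ the membership $\alpha \in x$ is decided in each $\dot{\tau}_n$, and the pattern of decisions respects the stabilization behavior of requirement \ref{requirement: stabilization} in the definition of $\mathbb{H}$, permitting the tower to be realized as the image-sequence of a single $\dot{\tau} \in M_\omega$.

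\emph{Main obstacle.} The hard point is showing that the tower $(\dot{\tau}_n)_n$ actually determines an element of $M_\omega$, not merely of $\bigcap_n M_n$. This uses the stabilization requirement in $\mathbb{H}$'s definition (which encodes compatibility under the ultrapower embeddings), the Strong Prikry Property (which confines each membership decision to a bounded level of the tree), and the distributivity from Lemma \ref{lemma: distributivity of cal H} (which permits choosing $\dot{\tau}_n$ as a name of controlled size).
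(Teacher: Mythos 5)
Your $\subseteq$ direction is fine and is essentially what the paper takes as already known: $M_\omega$ and the tail of the critical sequence are definable in $M_n$ from the internal iteration by $j_n(\mathcal{U})$, and $H$ is the last coordinate of $\mathcal{H}_n$, from which $\mathcal{H}$ can be reconstructed in $M_n[\mathcal{H}_n]$ using $j_{n,\omega}\restriction\mathbb{A}\in M_n$.

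The $\supseteq$ direction, however, has a genuine gap, in two places. First, the existence of a ``coherent tower'' of names $\dot\tau_n\in M_n$ for $x$ that is respected by the maps $j_{n,n+1}$ is precisely the crux, and you assert it rather than prove it: an element of the direct limit $M_\omega$ is $j_{n,\omega}(y)$ for a \emph{single} $y$ at some finite stage, so a coherent tower amounts to one name being (eventually) fixed by the embeddings, and an arbitrary $x\in\bigcap_n M_n[\mathcal{H}_n]$ comes with no canonical choice of names for which this could be expected; the appeal to stabilization, the Strong Prikry Property and distributivity names ingredients but supplies no mechanism. Second, and more fundamentally, even if such a tower existed, pushing a name for $x$ forward along $j_{n,\omega}$ and evaluating it at the corresponding coordinates of $\mathcal{H}$ yields $j^\star_{n,\omega}(x)$, the image of $x$ under the lifted embedding, not $x$ itself; since sets of ordinals are in general moved by these embeddings, your construction does not recover $x$, and the proposal never addresses this.

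The paper's argument avoids names altogether and fixes exactly these two issues by a pull-back/push-forward trick: for each $n$ set $x_n=\{\zeta\mid j_{n,\omega}(\zeta)\in x\}$, which lies in $M_n[\mathcal{H}_n]$ because $j_{n,\omega}$ is definable in $M_n$; lift $j_{n,\omega}$ to $j^\star_{n,\omega}\colon M_n[\mathcal{H}_n]\to M_\omega[{<}j_{n,\omega}\image\mathcal{H}_n{>}]\subseteq M_\omega[P][\mathcal{H}]$, so that each $j^\star_{n,\omega}(x_n)$ belongs to $M_\omega[P][\mathcal{H}]$; then use that $M_\omega[P][\mathcal{H}]$ is closed under $\omega$-sequences (closure of $M_\omega[P]$ as in Lemma \ref{lemma: MomegaP is closed}, together with the fact that $\mathcal{H}$ adds no new $\omega$-sequences, Lemma \ref{lemma: distributivity of cal H}) to see that the whole sequence $\langle j^\star_{n,\omega}(x_n)\mid n<\omega\rangle$ is in the model; finally recover $x$ inside $M_\omega[P][\mathcal{H}]$ by the rule that $\zeta\in x$ if and only if $\zeta\in j^\star_{n,\omega}(x_n)$ for all but finitely many $n$. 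This reconstruction step, replacing your coherent tower, is the missing idea.
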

\begin{proof}
We already know that $M_{\omega}[P][\mathcal{H}] \subseteq \bigcap_{n < \omega} M_n[\mathcal{H}_n]$. Let us show the other direction.

Let $x$ be a set of ordinals in the intersection of all $M_n[\mathcal{H}_n]$. Since $j_{n,\omega}$ is definable in $M_n$, we may define:
\[x_n = \{\zeta \mid j_{n,\omega}(\zeta) \in x\}\in M_n[\mathcal{H}_n].\]
The embedding $j_{n,\omega}$ extends to an embedding:
\[j_{n,\omega}^\star\colon M_n[\mathcal{H}_n]\to M_{\omega}[<j_{n,\omega}\image \mathcal{H}_n>]\subseteq M_{\omega}[P][\mathcal{H}].\]
Therefore, $j_{n,\omega}(x_n)$ is well-defined and belongs to $M_{\omega}[P][\mathcal{H}]$. The model $M_{\omega}[P]$ is closed under $\omega$-sequences. Since any $H$ does not add any $\omega$-sequence of ordinals to $V$, the same holds in $M_{\omega}[P][\mathcal{H}]$. Thus, $\langle j_{n,\omega}(x_n) \mid n < \omega\rangle\in M_{\omega}[P][\mathcal{H}]$. 

Now we can reconstruct $x$ as follows: $\zeta\in x$ if and only if for all but finitely many $n < \omega$, $\zeta \in j_{n,\omega}(x_n)$.  
\end{proof}
We conclude that $H \in M_\omega[P][\mathcal{H}]$ (as it belongs to any of the models $M_n[\mathcal{H}_n]$).  

\section{Subcompact cardinals}\label{section: subcompact cardinals}
In this section we will discuss the relationship between subcompactness of
cardinals and stationary reflection. Both of these concepts are related to Jensen's square principle.
\begin{definition}[Jensen]
Let $\kappa$ be a cardinal. A sequence $\mathcal{C} = \langle C_\alpha \mid \alpha < \kappa^{+}\rangle$ is a $\square_\kappa$-sequence if:
\begin{enumerate}
\item $C_\alpha$ is a closed unbounded subset of $\alpha$.
\item $\otp C_\alpha \leq \kappa$. 
\item For all $\beta \in \acc C_\alpha$, $C_\beta = C_\alpha \cap \beta$.
\end{enumerate}
\end{definition}
$\square_\kappa$ is a strong non-compactness principle, see \cite{Magidor2012Square} and \cite{CummingsForemanMagidor2001}. For example:
\begin{lemma}
Let $\kappa$ be a cardinal such that $\square_\kappa$ holds. For every stationary subset $S \subseteq \kappa^{+}$ there is a stationary subset $T \subseteq S$ that does not reflect.
\end{lemma}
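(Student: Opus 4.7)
The plan is to use a standard pigeonhole-plus-coherence argument based on the $\square_\kappa$-sequence. First, I would fix a $\square_\kappa$-sequence $\mathcal{C} = \langle C_\alpha \mid \alpha < \kappa^{+}\rangle$ and a stationary $S \subseteq \kappa^{+}$. After discarding an initial segment, I may assume every $\alpha \in S$ satisfies $\alpha > \kappa$. Since $\otp(C_\alpha) \leq \kappa < \alpha$ for such $\alpha$, the assignment $f(\alpha) = \otp(C_\alpha)$ is a regressive function on $S$, so Fodor's lemma yields a stationary $T \subseteq S$ on which $f$ is constantly equal to some fixed ordinal $\eta \leq \kappa$.

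Next, I would argue that this $T$ does not reflect. Suppose towards a contradiction that $T$ reflects at some $\beta$ with $\cf(\beta) > \omega$. Since $C_\beta$ is club in $\beta$ and $\otp(C_\beta)$ has uncountable cofinality, $\acc(C_\beta)$ is club in $\beta$; hence $T \cap \acc(C_\beta)$ is stationary in $\beta$, and in particular contains at least two elements $\alpha_0 < \alpha_1$. The coherence clause of $\square_\kappa$ gives $C_{\alpha_i} = C_\beta \cap \alpha_i$ for $i \in \{0,1\}$, so
\[
\eta = \otp(C_{\alpha_0}) = \otp(C_\beta \cap \alpha_0) < \otp(C_\beta \cap \alpha_1) = \otp(C_{\alpha_1}) = \eta,
\]
a contradiction.

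The argument has no genuinely hard step: the only places where one must be a little careful are (i) checking that $f$ is regressive, which requires throwing away the bounded piece $S \cap (\kappa+1)$, and (ii) observing that $\acc(C_\beta)$ is club in $\beta$ whenever $\cf(\beta) > \omega$, which follows from the fact that limit ordinals below an ordinal of uncountable cofinality form a club. Everything else is a direct application of Fodor's lemma together with the coherence clause of $\square_\kappa$.
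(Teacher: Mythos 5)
Your argument is correct: fixing the order type on a stationary $T \subseteq S$ via Fodor and then using coherence at two accumulation points of $C_\beta$ to get $\eta < \eta$ is exactly the standard proof, and the small points you flag (regressivity after discarding $S \cap (\kappa+1)$, and $\acc(C_\beta)$ being club in $\beta$ when $\cf(\beta) > \omega$) are handled correctly. The paper itself states this lemma without proof, citing it as a standard consequence of $\square_\kappa$, so your write-up supplies precisely the intended argument.
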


Let $V$ be a model of $\ZFC$, such that $\kappa\in V$ is an infinite cardinal and $\square_\kappa$ holds. If $W$ is a larger model, $V \subseteq W$ and $(\kappa^{+})^V = (\kappa^{+})^W$ then $W \models \square_{|\kappa|^W}$. Thus, in order to obtain a model in which some type of stationary reflection holds at $\kappa^+$, without collapsing $\kappa^{+}$, we must start from a model in which $\square_\kappa$ fails.

The principle $\square_\kappa$ was originated from the study of the fine
structure of $L$. Jensen proved that $\square_\kappa$ holds in $L$ for all
infinite $\kappa$ and more sophisticated arguments provide square sequences in
larger core models. While studying the properties that imply the failure of
square, Jensen isolated the notion of subcompactness.

\begin{definition}[Jensen]
Let $\kappa$ be a cardinal. $\kappa$ is \emph{subcompact} if for every
$A\subseteq H(\kappa^{+})$ there is $\rho < \kappa$ and $B \subseteq
H(\rho^{+})$ such that there is an elementary embedding: \[j\colon \langle
H(\rho^{+}),\in,B\rangle \to \langle H(\kappa^{+}), \in ,A\rangle\] with $\crit
j = \rho$.
\end{definition}

Note that $j(\rho) = \kappa$, thus $\rho$ is $1$-extendible with target
$\kappa$.  Moreover, the set of ordinals $\delta < \kappa^{+}$ which are the
$\sup j\image \rho^{+}$ for some subcompact embedding with critical point $\rho$
is stationary.

The first subcompact cardinal is smaller than the first cardinal $\kappa$ which
is $\kappa^{+}$-supercompact, since the first subcompact is weakly compact but
not measurable.  Subcompact cardinals are weaker than supercompact cardinals,
but are still strong enough to imply the failure of $\square_\kappa$. 
Moreover, Zeman and Schimmerling proved, in \cite{SchimmerlingZeman2001}, that 
$\square_\kappa$ holds for every $\kappa$ which is not subcompact in models of the form $L[\vec{E}]$ which satisfy some modest iterability and solidity requirements. 

\begin{theorem}[Jensen]
If $\kappa$ is subcompact then $\square_\kappa$ fails.
\end{theorem}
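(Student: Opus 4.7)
The plan is to assume $\square_\kappa$ holds via a sequence $\vec{C} = \langle C_\alpha \mid \alpha < \kappa^{+}\rangle$ and use a subcompactness embedding to reflect this to an impossible coherent thread through a reflected $\square_\rho$-sequence. Coding $\vec{C}$ as a subset $A \subseteq H(\kappa^{+})$, subcompactness yields $\rho < \kappa$, $B \subseteq H(\rho^{+})$, and an elementary embedding
\[
j \colon \langle H(\rho^{+}), \in, B\rangle \to \langle H(\kappa^{+}), \in, A\rangle
\]
with $\crit j = \rho$, $j(\rho) = \kappa$. By elementarity $B$ codes a $\square_\rho$-sequence $\vec{D} = \langle D_\alpha \mid \alpha < \rho^{+}\rangle$.

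The reflection target will be $\delta = \sup j \image \rho^{+} < \kappa^{+}$, which has cofinality $\rho^{+}$, and the candidate thread is $\bar E = j^{-1}(C_\delta) \subseteq \rho^{+}$. The core computation is that for every $\bar\beta \in \acc(\bar E)$ with $\cf \bar\beta < \rho$, the map $j$ is continuous at $\bar\beta$ and $j(\bar\beta) \in \acc(C_\delta)$, so coherence of $\vec{C}$ gives $C_{j(\bar\beta)} = C_\delta \cap j(\bar\beta)$, elementarity gives $j(D_{\bar\beta}) = C_{j(\bar\beta)}$, and then the equivalence $\bar\alpha \in D_{\bar\beta} \iff j(\bar\alpha) \in j(D_{\bar\beta}) \iff \bar\alpha \in \bar E$ (for $\bar\alpha < \bar\beta$) delivers $D_{\bar\beta} = \bar E \cap \bar\beta$. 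Granted unboundedness of $\bar E$, a cofinal set of $\bar\beta \in \acc(\bar E)$ has cofinality $\omega$, each with $\otp(\bar E \cap \bar\beta) \leq \rho$; taking supremum gives $\otp \bar E \leq \rho$, which contradicts the fact that $\bar E$ is unbounded in the regular cardinal $\rho^{+}$.

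The main obstacle is therefore showing that $\bar E$ is unbounded, equivalently that $C_\delta$ meets $j \image \rho^{+}$ cofinally. I plan to argue this by a cofinality analysis of $T = C_\delta \cap \operatorname{cl}(j \image \rho^{+})$, a club in $\delta$. A short calculation shows $\operatorname{cl}(j \image \rho^{+})$ retains order type $\rho^{+}$ (the new limit points $\sup j \image \bar\alpha$ for $\cf \bar\alpha = \rho$ are absorbed into positions below $\rho^{+}$ by additive indecomposability), so $\otp T = \rho^{+}$. If $\bar E$ were bounded by some $\bar\eta < \rho^{+}$, every $\gamma \in T$ above $j(\bar\eta)$ would fail to lie in $j \image \rho^{+}$, hence would equal $\sup j \image \bar\alpha$ for some $\bar\alpha$ of cofinality $\rho$, forcing $\cf \gamma = \rho$ throughout the tail of $T$. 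However the order isomorphism $\rho^{+} \to T$ carries any $\bar\beta$ of cofinality $\omega$ to a limit point of $T$ of cofinality at most $\omega$, contradicting the previous conclusion once the image exceeds $j(\bar\eta)$.
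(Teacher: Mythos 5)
Your proof is correct and takes essentially the same route as the paper's: both reflect the square sequence itself through a subcompactness embedding, set $\delta=\sup j\image\rho^{+}$, and get a contradiction from coherence of $C_\delta$ at points of small cofinality in the range of $j$ together with the order-type bound $\leq\rho$ (you phrase this as the pulled-back thread $\bar E$ on the $\rho^{+}$ side, the paper as an order-type count inside $C_\delta$). Your unboundedness step for $\bar E$, while correct, can be shortened: since $j$ is continuous at ordinals of cofinality $<\rho$, the set $j\image\rho^{+}$ is closed under limits of countable cofinality, so interleaving points of $C_\delta$ and of $j\image\rho^{+}$ along an $\omega$-sequence immediately produces cofinally many points of $C_\delta\cap j\image\rho^{+}$.
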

\begin{proof}
Assume otherwise, and let $\mathcal{C}$ be $\square_\kappa$-sequence.  Let $\rho < \kappa$, and $\tilde{\mathcal{C}}$ be such that there is an elementary embedding:
\[j\colon \langle H(\rho^{+}), \in, \tilde{\mathcal{C}}\rangle \to \langle H(\kappa^{+}),\in, \mathcal{C}\rangle\]
Let $\mathcal{A}$ be $j\image \rho^{+}$. For every $\alpha \in \mathcal{A}$, if $\cf \alpha \neq \kappa$ then $\cf \alpha < \rho$ and therefore $\otp C_\alpha < \rho$. 

Let $\delta = \sup \mathcal{A}$, and let us look at $C_{\delta}$. Since $\cf \delta = \rho^{+}$, $\otp C_\delta \geq \rho^{+}$. On the other hand for every $\alpha \in \acc C_\delta$ with $\cf \alpha < \rho$, $\otp (C_\delta \cap \alpha) = \otp C_\alpha < \rho$, which is impossible.
\end{proof}


The same proof as above shows that the following stronger claim holds:
\begin{remark}[Zeman, \cite{Zeman2017}]
Let $\kappa$ be a subcompact cardinal. Then, there is no sequence $\langle \mathcal{C}_\alpha \mid \alpha \in S^{\kappa^{+}}_{<\kappa}\rangle$ such that:
\begin{enumerate}
\item For all $C \in \mathcal{C}_\alpha$, $C$ is a club at $\alpha$, $\otp C < \kappa$.
\item For all $\alpha \in S^{\kappa^{+}}_{<\kappa}$, $0 < |\mathcal{C}_\alpha| < \kappa$.
\item For all $C\in \mathcal{C}_\alpha$, $\beta\in \acc C$, $C \cap \beta \in \mathcal{C}_\beta$.
\end{enumerate}
\end{remark}

We note that $\square(\kappa^{+})$ can still hold where $\kappa$ is subcompact.
Indeed, subcompactness behaves much like Mahloness of $\kappa^{+}$, and cannot
be destroyed by a forcing which is $\kappa^{+}$-strategically closed.  The
forcing to add $\square(\kappa^+)$ is $\kappa^+$-strategically closed.  Yet, the
failure of $\square_\kappa$ for a subcompact $\kappa$ indicates that
subcompactness has a deep connection to stationary reflection. The following
argument (essentially due to Zeman) is similar to the Shelah-Harrington
\cite{HarringtonShelah1985} argument for obtaining
$\Refl(S^{\omega_2}_{\omega})$ from a Mahlo cardinal. In \cite{Zeman2017}, a
similar theorem is proven when the subcompact cardinal is collapsed to be
$\omega_n$ for some $n$. For completeness we include a proof here for the case
in which the subcompactness of $\kappa$ is preserved.

\begin{theorem}[Zeman]\label{bounded reflection at subcompact}
Let $\kappa$ be subcompact and assume that $2^{\kappa} = \kappa^{+}$ and let $\eta < \kappa$. Then, there is a forcing notion $\mathbb{P}$ that does not collapse cardinals and forces that every stationary subset of $S^{\kappa^{+}}_{<\eta}$ reflects at a point in $S^{\kappa^{+}}_{<\kappa}$ of arbitrary high cofinality. Moreover, $\kappa$ remains subcompact in the generic extension.
\end{theorem}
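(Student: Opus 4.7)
The plan is to obtain the reflection directly from subcompactness of $\kappa$ in the generic extension, in analogy with the classical weakly compact reflection argument at $\omega_2$. The forcing $\mathbb{P}$ is taken to be a $\leq\kappa$-support iteration of length $\kappa^+$ whose purpose is to preserve subcompactness while providing auxiliary structure (such as club-shooting through the complement of potentially non-reflecting stationary subsets of $S^{\kappa^+}_{<\eta}$, arranged by a bookkeeping function enabled by $2^\kappa = \kappa^+$). Each nontrivial iterand is $\kappa^+$-distributive by the standard argument (bounded closed subsets of $\kappa^+$ disjoint from a non-reflecting target close off correctly at limits of length $<\kappa^+$), and the full iteration is $\kappa^+$-distributive and $\kappa^{++}$-c.c.\ via a $\Delta$-system argument, so no cardinals are collapsed.

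To preserve subcompactness, given $A \in H(\kappa^+)^{V[G]}$ with a nice name $\dot A$, apply subcompactness in $V$ to a structure encoding $\mathbb{P}$, its bookkeeping, and $\dot A$, obtaining an elementary embedding $j\colon \langle H(\rho^+)^V, \in, \bar{\mathbb{P}}, \dot{\bar A}\rangle \to \langle H(\kappa^+)^V, \in, \mathbb{P}, \dot A\rangle$ with $\crit j = \rho$. By elementarity $\bar{\mathbb{P}}$ is the $\rho^+$-initial segment of $\mathbb{P}$, and $\bar G = G \cap \bar{\mathbb{P}}$ is $V$-generic. Lift $j$ via a master condition argument: the tail $\mathbb{P}/\bar G$ is sufficiently closed that a condition forcing $j\image \bar G$ into the extension generic exists in $V[G]$, yielding $j^*\colon V[\bar G] \to V[G^*]$ which restricts to witness subcompactness of $\kappa$ in $V[G]$.

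For reflection, work in $V[G]$. Given a stationary $T \subseteq S^{\kappa^+}_{<\eta}$ and $\mu < \kappa$, apply subcompactness to $T$: fix $\rho \in (\max(\mu,\eta), \kappa)$ and an elementary $j\colon \langle H(\rho^+), \in, \bar T\rangle \to \langle H(\kappa^+), \in, T\rangle$ with $\crit j = \rho$ and $\bar T = j^{-1}(T)$. Set $\delta = \sup j\image \rho^+$; then $\cf \delta = \rho^+ > \mu$ and $\delta \in S^{\kappa^+}_{<\kappa}$. By elementarity $\bar T$ is a stationary subset of $S^{\rho^+}_{<\eta}$, and since $\eta < \rho$, $j$ is continuous at each $\alpha \in \bar T$. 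The function $\psi\colon \rho^+ \to \delta$ given by $\psi(\alpha) = \sup j\image \alpha$ is continuous and strictly increasing, so for any club $D \subseteq \delta$ the preimage $\psi^{-1}[D]$ contains a club in $\rho^+$; intersecting with $\bar T$ yields $\alpha \in \bar T$ with $j(\alpha) = \psi(\alpha) \in T \cap D$. Hence $T \cap \delta$ is stationary in $\delta$, and since $\rho$ can be chosen arbitrarily large below $\kappa$ the reflection point has arbitrarily high cofinality.

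The main obstacle is the subcompactness preservation, specifically the master condition for lifting $j$. Unlike the supercompact case, subcompactness provides embeddings from the small model into the big one, so the image $j\image \bar{\mathbb{P}}$ is not a tail of $\mathbb{P}$ and the master condition must be assembled coordinatewise, requiring the iteration's bookkeeping to be arranged so that the values of the generic at coordinates in $j\image \bar{\mathbb{P}}$ can be prescribed consistently with the portion of the generic below; this is the technically delicate point.
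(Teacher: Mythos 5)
The central step of your reflection argument --- ``By elementarity $\bar T$ is a stationary subset of $S^{\rho^+}_{<\eta}$'' --- does not follow from subcompactness. The statement ``$T$ is stationary in $\kappa^+$'' quantifies over clubs of $\kappa^+$, which are not elements of $H(\kappa^+)$, so it is a $\Pi^1_1$ statement over $\langle H(\kappa^{+}),\in,T\rangle$ rather than a first-order one; a subcompact embedding $j\colon \langle H(\rho^{+}),\in,\bar T\rangle \to \langle H(\kappa^{+}),\in,T\rangle$ gives no information about whether $\bar T$ is stationary in $\rho^{+}$. Pulling stationarity down to the small structure is exactly what $\kappa^+$-$\Pi^1_1$-subcompactness provides (Definition \ref{definition: pi11-subcompact}, Lemma \ref{lemma: reflection at pi11 subcompact}) and what plain subcompactness does not; indeed Theorem \ref{theorem: subcompact is not enough} shows that preserved (even measurable) subcompactness is compatible with a non-reflecting stationary subset of $S^{\kappa^{+}}_{<\kappa}$, so reflection in the extension cannot be read off from the preserved embedding property. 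In the actual proof the reflection comes from the forcing itself, Harrington--Shelah style: a bookkeeping iteration of length $\kappa^{++}$ (length $\kappa^{+}$ is too short --- there are $\kappa^{++}$ many names for subsets of $\kappa^{+}$ to catch) shoots clubs through the complements of those stationary $S\subseteq S^{\kappa^{+}}_{<\eta}$ which fail to reflect at ordinals of cofinality in some tail $[\bar\kappa,\kappa)$, and subcompactness is used in the ground model only, to control the iteration.

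This shifts all the weight onto distributivity, and there your ``standard argument'' does not apply: the killed sets may still reflect at points of small cofinality, the club-shooting uses bounded conditions, and the iterands are merely distributive, not (strategically) closed, so decreasing sequences need not have lower bounds coordinatewise. The heart of the paper's proof is Lemma \ref{lemma: distributiveness}: one first prepares with an Easton iteration $\mathbb{Q}_0$ adding $\Add(\rho^{+},\rho^{++})$ at every inaccessible $\rho\leq\kappa$, then applies a subcompact embedding to the \emph{whole} iteration $\mathbb{P}_\alpha$ and shows the reflected iteration $\tilde{\mathbb{P}}_{\tilde\alpha}$ is equivalent, over $V[H_0\restriction\rho]$, to adding Cohen subsets of $\rho^{+}$; a generic for it is then assembled from the Cohen subsets supplied by $\mathbb{Q}_0$, the successor step using a master condition to see that the reflected bad set is nonstationary below $\sup j\image\rho^{+}$ (this is precisely where the hypothesis of non-reflection at a tail of cofinalities enters). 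Your proposal omits the preparation $\mathbb{Q}_0$ entirely, so neither the $\kappa^{+}$-distributivity (hence preservation of $\kappa^{+}$) nor the master conditions needed for your subcompactness-preservation lifting --- where ``the tail is sufficiently closed'' is false, since the forcing is only distributive --- is actually established; the paper obtains those master conditions by the same Lemma \ref{lemma: distributiveness} machinery.
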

\begin{proof}
Let $\mathbb{Q}_0$ be an Easton support iteration of length $\kappa$. In the $\rho$ step, if $\rho$ is not inaccessible, force with the trivial forcing. Otherwise, force with $\Add(\rho^{+}, \rho^{++})$.

We define a forcing notion $\mathbb{P}$. $\mathbb{P}$ is essentially a $\kappa$-support iteration of length $\kappa^{++}$. Let us define, by induction on $\alpha < \kappa^{++}$, $\mathbb{P}_\alpha$ and $\mathbb{Q}_\alpha$. $\mathbb{Q}_0$ was already defined, and we let $\mathbb{P}_0$ be the trivial forcing and $\mathbb{P}_1 = \mathbb{Q}_0$.

Let $\alpha > 0$. Let us pick a name $\dot{S}_\alpha$ for a subset of $S^{\kappa^{+}}_{\theta}$ for some $\theta < \eta$. If there is $\bar{\kappa} < \kappa$ such that $\mathbb{P}_\alpha$ forces that $\dot{S}_\alpha$ does not reflect at any ordinal of cofinality between $\bar{\kappa}$ and $\kappa$, then we let $\mathbb{Q}_\alpha$ be the forcing that adds a club $C_\alpha$ disjoint from $S_\alpha$ by bounded conditions from $V^{\mathbb{Q}_0}$. 

For all $\gamma < \kappa^{++}$, let $\mathbb{P}_\gamma$ be the collection of all
sequences of length $\gamma$, $p$, such that $\supp p = \{\beta < \gamma \mid
p(\beta) \neq \emptyset\}$ has size at most $\kappa$ and for all $\beta <
\gamma$, $p\restriction \beta \Vdash_{\mathbb{P}_\beta} p(\beta)\in
\mathbb{Q}_\beta$.  We order $\mathbb{P}_\gamma$ in the natural way.  Let
$\mathbb{P} = \mathbb{P}_{\kappa^{++}}$.

Since our forcing notions are going to be only distributive and not closed (or strategically closed), we wish to avoid the delicate point of whether the conditions from $V^{\mathbb{Q}_0}$ are a dense subset of the iteration, and thus $\mathbb{P}$ is not defined as the standard iteration of the $\mathbb{Q}_\alpha$.

\begin{lemma}\label{lemma: distributiveness}
Every $\mathbb{P}_\alpha$-name for a $\kappa$-sequence of ordinals is forced to be a $\mathbb{Q}_0$-name.
\end{lemma}
\begin{proof}
Let us prove the lemma by induction. For $\alpha = 1$, $\mathbb{P}_\alpha = \mathbb{Q}_0$ and the statement is trivial.

Let us assume now that the claim is true for all $\beta < \alpha$. Since
$|\mathbb{P}_\alpha| \leq \kappa^{+}$, we can code $\mathbb{P}_\alpha$ as a
subset of $H(\kappa^{+})$.  Let $\rho < \kappa$ such that there is an elementary
embedding:

\[j\colon \langle H(\rho^{+}), \in, \tilde{\mathbb{P}}_{\tilde{\alpha}}\rangle
\to \langle H(\kappa^{+}), \in, \mathbb{P}_{\alpha}\rangle.\]

By elementarity, $\tilde{\mathbb{P}}_{\tilde{\alpha}}$ codes an iteration for
killing non reflecting subsets of $\rho^{+}$, of length $\tilde{\alpha}$ in the
same way as $\mathbb{P}_\alpha$.  Let us denote the components of the iteration
by $\tilde{\mathbb{Q}}_{\beta}$.  We note that if $H_0$ is
$\mathbb{Q}_0$-generic, then $j$ lifts to the extension of $H(\rho^+)$ by $H_0
\upharpoonright \rho$.  In particular we can apply the elementarity of $j$ to
the coordinates $\tilde{\mathbb{Q}}_\beta$.

We build a generic filter $\tilde{G}$ for $\tilde{\mathbb{P}}_{\tilde{\alpha}}$
using the Cohen generic subsets of $\rho^+$ added by the iteration
$\mathbb{Q}_0$.  In fact we will show that $\tilde{\mathbb{P}}_\alpha$ is
equivalent to the Cohen forcing of subsets of $\rho^+$ over
$H(\rho^+)[H_0 \upharpoonright \rho]$.

To do so we define clubs $E_\gamma$ and $C_\gamma$ for $\gamma< \tilde{\alpha}$
with the following properties. Using the clubs $E_\gamma$ for all $\gamma < \tilde{\alpha}$, we can show that for all $\beta \leq \tilde{\alpha}$, $\tilde{\mathbb{P}}_\beta$ is equivalent (externally to $H(\rho^+)$) to the forcing that adds
$\beta$ Cohen subsets of $\rho^+$. We do this by taking the dense set of conditions
$p$ such that for all $\gamma<\beta$, either $p(\gamma) = \emptyset$ or
$\max(p(\gamma)) \in E_\gamma$.

We will also construct clubs $C_\gamma$ for all $\gamma < \tilde{\alpha}$. Those clubs are going to be $H(\rho^+)$-generic in the following sense. The natural filter $\tilde{G}_\beta\subseteq
\tilde{\mathbb{P}}_\beta$ given by the set of all $p$ in
$\tilde{\mathbb{P}}_\beta$ such that $p(0) \in H_0 \upharpoonright  \rho$ and

\[\forall 0 < \gamma < \beta,\, q(\gamma) = \emptyset \vee q(\gamma) = C_\gamma
\cap \max (q(\gamma) + 1)\]

is $H(\rho^+)$-generic.

So it remains to construct the clubs $E_\gamma$ and $C_\gamma$ and prove that
$\tilde{G}_\beta$ is generic.  We go by induction on $\beta$. Suppose that we
have constructed $E_\gamma$ and $C_\gamma$ for all $\gamma<\beta$.

If $\beta = \zeta +1$, then by elementarity and the definition of the iteration
we have that $j(\dot{\tilde{S}}_\zeta)= \dot{S}_{j(\zeta)}$ is forced by
$\mathbb{P}_\zeta$ to be a set consisting of ordinals of cofinality less than
$\rho$ which does not reflect at $\delta = \sup j \image \rho^+$.  Again by
elementarity, we can interpret $\dot{S}_{j(\zeta)} \cap j``\rho^+$ using only
$\tilde{G}_\zeta$.

By induction $\tilde{\mathbb{P}}_\gamma$ is equivalent to adding Cohen subsets
of $\rho^+$ and using a straightforward density argument it follows that there
is a dense subset of $p$ in $\tilde{\mathbb{P}}_\gamma$ such that each
nontrivial coordinate of $p$ has the same maximum element.  It follows that the
condition $m$ in $\mathbb{P}_{j(\zeta)}$ given by $\supp(m) = \bigcup_{q\in
\tilde{G}_\zeta} \supp j(q)$ and for all $\gamma \in \supp(m)$, $m(\gamma) =
\bigcup_{q\in\tilde{G}_\zeta} j(q)(\gamma) \cup \{\delta\}$ is a master
condition for $j``\tilde{G}_\zeta$.

It follows that $m$ decides $\dot{S}_{j(\zeta)} \cap j \image \rho^+$.  This set
is nonstationary in $V[H_0 \restriction \rho][\tilde{G}_\zeta]$, since otherwise it would
remain stationary in the full generic extension.  It follows that we can find a
club $D_\zeta$ in $\delta$ which is disjoint from it.  Let $E_\zeta = \{ \xi <
\rho^+ \mid j(\xi) \in D_\zeta \}$.  If we consider the dense subset of
$\tilde{\mathbb{Q}}_\zeta$ whose maximum element is in $E_\gamma$, then this forcing is
isomorphic to adding a Cohen subset of $\rho^+$ over $V[H_0\upharpoonright
\rho][\tilde{G}_\zeta]$. We stress that this isomorphism can be computed 
in the model $V[H_0\upharpoonright \rho][\tilde{G}_\zeta]$. 
Let $X_\zeta$ be a Cohen subset of $\rho^+$ which is generic over $V[H_0\upharpoonright \rho][\tilde{G}_\zeta]$ (there are such sets since $\tilde{G}_\zeta$ is equivalent to
a subset of $\zeta^+$ and $\Add(\zeta^+,\zeta^{++})$ is $\zeta^{++}$-cc). Applying the isomorphism between $\Add^{V[H_0\restriction \rho]}(\rho^+,1)$ and $\tilde{\mathbb{Q}}_\zeta$ on $X_\zeta$ 
we obtain a club $C_\zeta$
which is $\tilde{\mathbb{Q}}_\zeta$-generic over $V[H_0 \upharpoonright
\rho][\tilde{G}_\zeta]$. In fact $V[H_0 \upharpoonright
\rho][\tilde{G}_\zeta][C_\zeta] = V[H_0 \upharpoonright
\rho][\tilde{G}_\zeta][X_\zeta]$.  This completes the successor step.

If $\beta$ is limit, then using $E_\gamma$ for $\gamma<\beta$ and induction
there is an isomorphism between $\tilde{\mathbb{P}}_\beta$ and
$\Add(\rho^+,\beta)$ as computed in $V[H_0\upharpoonright \rho]$.  The fact that
 the sequence $\langle X_\gamma \mid \gamma < \beta \rangle$ is generic for
$\Add(\rho^+,\beta)$ implies that $\tilde{G}_\beta$ is generic for
$\tilde{P}_\beta$.

We conclude that there is a generic filter $\tilde{G}$ for
$\tilde{\mathbb{P}}_{\tilde{\alpha}}$. This generic filter is obtained in a
$\rho^{+}$-distributive extension of $V^{\mathbb{Q}_0 \upharpoonright \rho}$.
Thus, it does not introduce any new $\rho$-sequences of ordinals (recall that
$H(\rho^{+})$ is closed under $\rho$-sequences and thus computes correctly
$\rho$-distributivity). The lemma follows by elementarity.  \end{proof}

By the chain condition of $\mathbb{P}$, we can make sure that in the generic
extension, if $S$ is a subset of $S^{\kappa^{+}}_{<\kappa}$ which does not
reflect then $\Vdash S = \dot{S}_\alpha$ for some $\alpha < \kappa^{++}$ and therefore it is nonstationary.

\begin{lemma}
$\kappa$ is subcompact in the generic extension. 
\end{lemma}

\begin{proof}
Let $p$ be a condition and let $A$ be a name for a subset of $H(\kappa^{+})$.
By the chain condition of the iteration, there is an $\alpha < \kappa^{++}$ such
that $\dot{A}$ is a $\mathbb{P}_\alpha$-name.

By the subcompactness of $\kappa$ there is a cardinal $\rho < \kappa$ and an elementary embedding 
\[j\colon \langle H(\rho^{+}), \in, \bar{p}, \bar{\mathbb{P}}_{\bar{\alpha}},
\bar{A}\rangle \to \langle H(\kappa^{+}), \in, p, \mathbb{P}_\alpha, A\rangle.\]

By the arguments of Lemma \ref{lemma: distributiveness}, we can find a master condition $m$, namely a condition $m \leq p$ such that for any dense open set $D \subseteq \bar{\mathbb{P}}$ which is definable from parameters in $H(\rho^{+})$ and $A$, there is $q \in D$ such that $m \leq j(q)$. It is clear that in this case, if $m$ belongs to the generic filter than $j$ lifts to the generic extension.

Our argument shows that the set of such master conditions is dense in
$\mathbb{P}_\alpha$, so the lemma follows.\end{proof}

This finishes the proof of Theorem \ref{bounded reflection at subcompact}.\end{proof}

The next theorem shows that it is consistent that $\kappa$ is subcompact yet there is no forcing extension that preserves $\kappa$ and $\kappa^{+}$ and forces full stationary reflection at $S^{\kappa^{+}}_{<\kappa}$.
\begin{theorem}\label{theorem: subcompact is not enough}
Let $\kappa$ be subcompact. There is a generic extension in which $\kappa$ is subcompact and there is a non reflecting stationary set $S \subseteq S^{\kappa^{+}}_{<\kappa}$ and a partial square $\langle C_\alpha \mid \alpha \notin S\rangle$.
\end{theorem}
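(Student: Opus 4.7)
The plan is to define a forcing $\mathbb{P}$ of size $\kappa^{+}$ that is $\kappa^{+}$-strategically closed and whose generic adds a pair $(S, \vec{C})$ with $S \subseteq S^{\kappa^{+}}_{\omega}$ stationary and $\vec{C} = \langle C_\alpha \mid \alpha \in \kappa^{+} \setminus S\rangle$ a coherent sequence of clubs of order type $\leq \kappa$, each disjoint from $S$. Restricting $S$ to $S^{\kappa^{+}}_{\omega}$ makes non-reflection immediate from the existence of $\vec{C}$: any $\alpha < \kappa^{+}$ with $\cf \alpha > \omega$ lies outside $S$, so $C_\alpha \subseteq \alpha \setminus S$ is a club witnessing that $S \cap \alpha$ is non-stationary. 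Because $\mathbb{P}$ is $\kappa^{+}$-strategically closed and of size $\kappa^{+}$, the master-condition argument used in Theorem \ref{bounded reflection at subcompact} lifts any subcompactness embedding, so $\kappa$ remains subcompact in $V^{\mathbb{P}}$.

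Concretely I would take conditions to be pairs $p = (s^p, \vec{c}^p)$ where $s^p \subseteq S^{\kappa^{+}}_{\omega}$ is bounded, $\vec{c}^p = \langle c^p_\alpha \mid \alpha \in (\delta^p + 1) \setminus s^p,\ \alpha \text{ limit}\rangle$ for some $\delta^p < \kappa^{+}$, each $c^p_\alpha$ is a club in $\alpha$ of order type $\leq \kappa$, the coherence $c^p_\beta = c^p_\alpha \cap \beta$ holds for $\beta \in \acc c^p_\alpha$, and $c^p_\alpha \cap s^p = \emptyset$. The order is end-extension. Player II's winning strategy in the $\kappa^{+}$-closure game maintains, as invariant, that after her move $q_\eta$ the top $\delta^{q_\eta}$ has $V$-cofinality $\omega$ and is reached via an $\omega$-sequence $c^{q_\eta}_{\delta^{q_\eta}}$ avoiding $s^{q_\eta}$, and moreover the sequence of her previous tops $\langle \delta^{q_\xi} \mid \xi < \eta\rangle$ forms an initial segment of a preselected club $D_\eta \subseteq \delta^{q_\eta}$ designated in advance as $c^{q_\eta}_{\delta^{q_\eta}}$ when convenient. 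At limit stages $\lambda < \kappa^{+}$ she sets $\delta^{\infty} = \sup_\xi \delta^{q_\xi}$, takes $c_{\delta^{\infty}} = \{\delta^{q_\xi} \mid \xi < \lambda\}$ (of order type $\lambda \leq \kappa$), and then extends to reinstate the invariant, with the coherence $c_{\delta^{q_\xi}} = c_{\delta^{\infty}} \cap \delta^{q_\xi}$ enforced by construction.

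The main obstacle is organizing Player II's strategy so that the partial square coherence survives limit stages of the closure game; this is a classical issue, handled by committing in advance to which ordinals will serve as accumulation points of the ``spine'' of the square sequence. Once $\kappa^{+}$-strategic closure is established, the remaining steps are routine: a density argument using the strategy shows $S = \bigcup_{p \in G} s^p$ meets every name for a club (one builds a decreasing sequence of conditions forcing a chosen name to have cofinal members below some $\gamma$ of cofinality $\omega$, then adds $\gamma$ to $s$), another density argument shows $\bigcup_p \dom \vec{c}^p = \kappa^{+} \setminus S$, and partial square coherence is immediate from the definition. Combined with the preservation of subcompactness discussed above, this produces the required generic extension.
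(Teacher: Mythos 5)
Your plan has a fatal flaw at its core: the object you propose to add is outright inconsistent with $\kappa$ being subcompact, so no forcing can add it while preserving subcompactness. Suppose in the extension $\kappa$ were subcompact, $S \subseteq S^{\kappa^{+}}_{\omega}$, and $\langle C_\alpha \mid \alpha \in \lim(\kappa^{+})\setminus S\rangle$ were coherent with $\otp C_\alpha \leq \kappa$ and each $C_\alpha$ disjoint from $S$. Reflect the pair to some $\langle H(\rho^{+}), \in, \tilde S, \tilde{\mathcal C}\rangle$ and let $\delta = \sup j\image \rho^{+}$, so $\cf \delta = \rho^{+} > \omega$. Since you confined $S$ to cofinality $\omega$, $\delta \notin S$, hence $C_\delta$ exists and $\otp C_\delta \geq \rho^{+}$. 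Now pick $\beta \in \acc C_\delta$ which is a limit of $j\image\rho^{+}$, has $\cf\beta = \omega_1$, and satisfies $\otp(C_\delta\cap\beta) \geq \rho$; such $\beta$ exists because both sets are clubs (resp.\ a tail of a club) in $\delta$ and $\cf\delta=\rho^{+}>\omega_1$. Being a limit of $j\image\rho^{+}$ of cofinality $<\rho$, $\beta = j(\bar\beta)$ with $\cf\bar\beta=\omega_1$, so $\bar\beta\notin\tilde S$ and $\otp \tilde C_{\bar\beta} < \rho$, whence $\otp C_\beta = j(\otp\tilde C_{\bar\beta}) < \rho$; but coherence gives $C_\beta = C_\delta\cap\beta$ of order type $\geq\rho$ --- exactly Jensen's argument that subcompactness refutes $\square_\kappa$, reproduced in Section \ref{section: subcompact cardinals}. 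The point is that restricting $S$ to $S^{\kappa^{+}}_{\omega}$ while demanding a coherent club at \emph{every} limit ordinal outside $S$ makes your ``partial'' square too close to a full one.

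This is also exactly where your lifting argument breaks down. A master condition $m$ below $j\image\tilde G$ must have the square part defined up to $\delta = \sup j\image\rho^{+}$; since $\cf\delta = \rho^{+}$ you cannot put $\delta$ into $s$, so you must supply $c_\delta$, a club in $\delta$ cohering at all of its accumulation points with the clubs already decided generically by $\tilde G$ --- i.e.\ a thread through the reflected generic square --- and a routine density argument (or the contradiction above) shows no such thread exists. The paper's forcing is designed precisely to dodge this: $s$ is allowed to contain points of any non-measurable cofinality below $\kappa$, the master condition is $\langle \{\delta\}\cup\bigcup_\alpha j(s_\alpha),\, \bigcup_\alpha j(c_\alpha)\cup\{\langle\delta,\emptyset\rangle\}\rangle$, putting $\delta$ \emph{into} the non-reflecting set so that no coherent club is needed there, and the ``non-measurable cofinality'' clause guarantees that $\acc j\image E$ (whose new points have cofinality $\rho$, a measurable) provides the club disjoint from $s$ that legality of the condition requires. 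A secondary gap: your blanket claim of $\kappa^{+}$-strategic closure plus ``routine'' lifting ignores that the paper only gets $(\kappa+1)$-strategic closure directly and must prepare the ground model ($\square_\delta$ at non-subcompact $\delta<\kappa$, minimality of $\rho$, and the Ishiu--Yoshinobu theorem) to run the $\rho^{+}$-length construction of $\tilde G$; but even granting closure, the cofinality-$\omega$ restriction makes the theorem you are proving false for your forcing.
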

\begin{proof}
By preparing the ground model, if necessary, we may assume that for every $\delta < \kappa$, which is not subcompact $\square_{\delta}$ holds.
 
Let $\mathbb{P}$ be a forcing notion which consists of pairs $\langle s, c\rangle$ where:
\begin{enumerate}
\item $s$ is a bounded subset of $S^{\kappa^{+}}_{<\kappa}$ and for all limit
$\alpha \leq \sup s$, there is a club $d_\alpha$ in $\alpha$, which is disjoint from $s$. 
\item If $\beta \in s$ then $\cf \beta$ is non-measurable.
\item $c$ is a function and $\dom c$ is a successor ordinal between $\sup s$ and $\kappa^{+}$.
\item For every $\alpha \in \dom c$, $c(\alpha)$ is a closed subset of $\alpha$ (possibly the empty set).
\item If $\alpha\in \dom c \setminus s$ then $\sup c(\alpha) = \alpha$.
\item If $\beta \in \acc c(\alpha)$ then $c(\alpha)\cap \beta = c(\beta)$. 
\end{enumerate}

We order $\mathbb{P}$ by $\langle s', c'\rangle \leq \langle s, c\rangle$ if and
only if $s \subseteq s'$, $c = c' \restriction \dom c$ and $(s' \setminus s) \cap (\dom c) = \emptyset$ (note that $s'$ is an end extension of $s$ above the maximum of the domain of $c$, which is at least $\sup s$).

\begin{claim}
The forcing $\mathbb{P}$ is $\kappa + 1$-strategically closed.
\end{claim}
\begin{proof}  We define a winning strategy for the good player.  At successor
stages, the good player does nothing.  At limit stages, if the current stage of
the game is $\langle \langle s_\alpha, c_\alpha\rangle \mid \alpha <
\beta\rangle$, then setting $\rho_\alpha = \max \dom c_\alpha$ the good player
plays

\[s_{\beta} = \bigcup_{\alpha < \beta} s_\alpha \text{ and }c_\beta = \bigcup_{\alpha < \beta} c_\alpha \cup \{\langle \rho_{\beta}, \{\rho_\alpha \mid \alpha < \beta\}\rangle\}.\]

It is clear that this choice is a condition in $\mathbb{P}$ which is stronger
than all previous conditions in the play provided that $\beta \leq \kappa$.
\end{proof}

By the proof of the claim, it is clear that the strategy is definable in $H(\kappa^{+})$. Moreover, throughout the game the ordinals $\rho_\alpha$ will be a club which witness the non reflection of $s$ at each limit point.

Let us show that $\kappa$ is subcompact in the generic extension. Let $\dot{A}$
be a name for a subset of $H(\kappa^{+})$ in the generic extension. Since
$\mathbb{P} \subseteq H(\kappa^{+})$, we have $\dot{A} \subseteq H(\kappa^{+})$.

Let $\rho < \kappa$ and $\dot{B}, \tilde{\mathbb{P}} \subseteq H(\rho^{+})$ be such that there is an elementary embedding:
\[j\colon \langle H(\rho^{+}), \in, \tilde{\mathbb{P}}, \dot{B}\rangle \to \langle H(\kappa^{+}), \in, \mathbb{P}, \dot{A}\rangle.\]

Moreover, let us assume that $\rho$ is the minimal cardinal for which such $\dot{B}$ and $j$ exist. 
\begin{claim}
$\rho$ is not subcompact.
\end{claim}
\begin{proof}
Assume that $\rho$ is subcompact. Then there is some $\eta < \rho$ and $\dot{C}, \tilde{\tilde{\mathbb{P}}}$ and an elementary embedding $k$ such that:
\[k \colon \langle H(\eta^{+}), \in, \tilde{\tilde{\mathbb{P}}}, \dot{C}\rangle \to \langle H(\rho^{+}), \in, \tilde{\mathbb{P}}, \dot{B}\rangle.\]

Then $j\circ k$ is elementary, which contradicts the assumption that $\rho$ is minimal.
\end{proof}
The forcing $\tilde{\mathbb{P}}$ is $\rho + 1$-strategically closed by
elementarity. By a theorem of Ishiu and Yoshinobu \cite{Ishiu2002}, since $\square_\rho$ holds, $\tilde{\mathbb{P}}$ is $\rho^{+}$-strategically closed. This strategy is combined from the strategies for the shorter games and thus we can verify that the sequence of $\rho_\alpha$ which is constructed in the game is closed and disjoint from the constructed non-reflecting set.
Let \[\mathcal{D} = \{ D_{\varphi, a} \mid \varphi(x,y) \text{ is a first order formula}, a \in H(\rho^{+})\}\] 
be the set of all dense open subsets of $\tilde{\mathbb{P}}$ of the form 
\[D_{\varphi, a} = \{p\in\tilde{\mathbb{P}} \mid p\Vdash \neg\varphi(a, \dot{B}) \text{ or } p \Vdash \varphi(a, \dot{B})\}.\]

Let $\langle D_\alpha \mid \alpha < \rho^{+}\rangle$ be an enumeration of $\mathcal{D}$ with length $\rho^{+}$. Using the strategic closure of $\tilde{\mathbb{P}}$ we can generate a decreasing sequence of conditions $\langle p_\alpha \mid \alpha < \rho^{+}\rangle$ such that $p_\alpha \in D_\alpha$. 

Let $\tilde{G}\subseteq \tilde{\mathbb{P}}$ be the filter generated from the sequence $\langle p_\alpha \mid \alpha < \rho^{+}\rangle$. Let us show that there is a condition $m\in\mathbb{P}$ such that $\forall q\in\tilde{G}, m \leq j(q)$. This implies that $m$ forces that the embedding $j$ lifts to the generic extension. 

Indeed, let $p_\alpha = \langle s_\alpha, c_\alpha\rangle$. Then clearly, for $\alpha < \beta$, $s_\alpha$ is an initial segment of $s_\beta$ and $c_\alpha$ is an initial segment of $c_\beta$. Let $\delta = \sup j\image \rho^{+}$ and let us consider 
\[s = \{\delta\} \cup \bigcup_\alpha j(s_\alpha)\]
\[c = \bigcup_\alpha j(c_\alpha) \cup \{\langle \delta, \emptyset\rangle\}\]
The strategy enables us to obtain a club $E \subseteq \rho^{+}$ which is disjoint from $\bigcup_{\alpha} s_\alpha$. $j\image E$ is disjoint from $s$. Moreover, the closure of $j\image E$ differs from $j\image E$ only by points of cofinality $\rho$. Since $\rho$ is measurable those points cannot appear at $s$ and therefore also $\acc j\image E$ is disjoint from $s$.
\end{proof}

The theorem suggests that the consistency of full stationary reflection at a subcompact cardinal might exceed the consistency of subcompact cardinal. Moreover, since the forcing is $\kappa^{+}$-distributive, we can conclude that if $\kappa$ is measurable subcompact or even more it will remain measurable subcompact after the forcing and there is no generic extension in which stationary reflection holds at $S^{\kappa^{+}}_{<\kappa}$ and $\kappa, \kappa^{+}$ are preserved.

The exact large cardinal assumption which is required in order to get stationary reflection at the set $S^{\kappa^{+}}_{<\kappa}$ where $\kappa$ is subcompact is unclear. In the previous theorem, the set of all $\beta < \kappa^{+}$ such that there is an elementary embedding $j\colon H(\rho^{+}) \to H(\kappa^{+})$ with $\sup j\image \rho^{+} = \beta$ is stationary and non-reflecting. This is analogous to the case of Mahlo cardinal in a generic extension of $L$ in which stationary sets of bounded cofinality might reflect at inaccessible cardinals but the set of inaccessible cardinals does not reflect. 

A similar forcing argument as in Theorem \ref{theorem: subcompact is not enough}
shows that if $V$ is any model of $\ZFC + \GCH$ then there is a generic
extension in which for every cardinal $\kappa$, there is a coherent sequence
$\langle C_\alpha \mid \alpha < \kappa^{+}\rangle$, $\otp C_\alpha \leq \kappa$,
such that $C_\alpha$ is a club at $\alpha$ if and only if there is no elementary embedding $j\colon H(\rho^{+}) \to H(\kappa^{+})$ with $\alpha = \sup j\image \rho^{+}$. Let us assume that such a partial square exists, and that stationary reflection holds at $S^{\kappa^{+}}_{<\kappa}$. Then every stationary set reflects at ordinals $\delta$ which are $\sup j\image \rho^{+}$ for some elementary $j\colon H(\rho^{+})\to H(\kappa^{+})$. 

The following definition, due to Neeman and Steel, will play a major role in our investigation of unbounded stationary reflection. In their paper,
\cite{NeemanSteelSubcompact}, this large cardinal notion is denoted by
$\Pi^2_1$-subcompact. We feel that the notion $\kappa^+$-$\Pi^1_1$-subcompact is more appropriate as
it emphasizes the resemblance between $\kappa^+$ and a weakly compact cardinal.

\begin{definition}\label{definition: pi11-subcompact}
A cardinal $\kappa$ is $\kappa^+$-$\Pi^1_1$-subcompact if for every set $A\subseteq H(\kappa^{+})$, and every $\Pi^1_1$-statement $\Phi$ such that $\langle H(\kappa^{+}), \in, A\rangle \models \Phi$, there is $\rho < \kappa$, $B\subseteq H(\rho^{+})$ and an elementary embedding:
 \[j\colon \langle H(\rho^{+}), \in, B\rangle \to \langle H(\kappa^{+}), \in, A\rangle\]
such that $\langle H(\rho^{+}), \in, B\rangle\models \Phi$.
\end{definition}
\begin{lemma}\label{lemma: pi11-subcompact is measurable}
Let $\kappa$ be $\kappa^+$-$\Pi^1_1$-subcompact. Then $\kappa$ is measurable.
\end{lemma}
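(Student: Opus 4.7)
My plan is a contradiction argument. Suppose $\kappa$ is not measurable; I will reflect a $\Pi^1_1$ statement of non-measurability down to a smaller $\rho$, and then use the reflecting embedding $j\colon H(\rho^+)\to H(\kappa^+)$ itself to produce a $\rho$-complete non-principal ultrafilter on $\rho$, contradicting the reflection.

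The $\Pi^1_1$ statement over $\langle H(\kappa^+),\in\rangle$ I would use is
\[
\Phi \,\equiv\, \forall\mathcal{U}\,\neg\bigl(\mathcal{U}\text{ is a non-principal, $\delta$-complete ultrafilter on $\delta$}\bigr),
\]
where $\delta$ abbreviates ``the largest cardinal of the structure'', which is first-order definable in $\langle H(\kappa^+),\in\rangle$. The second-order quantifier $\forall\mathcal{U}$ ranges over subsets of the universe $H(\kappa^+)$, and the parenthesized property is first-order in $\langle H(\kappa^+),\in,\mathcal{U}\rangle$, so $\Phi$ is genuinely $\Pi^1_1$. Any $\kappa$-complete non-principal ultrafilter on $\kappa$ is a subset of $P(\kappa)\subseteq H(\kappa^+)$, so the quantifier captures all candidates. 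Under our contradictory assumption, $\Phi$ holds in $\langle H(\kappa^+),\in\rangle$.

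By $\kappa^+$-$\Pi^1_1$-subcompactness applied to $\Phi$ with $A=\emptyset$, I obtain $\rho<\kappa$, $B\subseteq H(\rho^+)$, and an elementary embedding $j\colon\langle H(\rho^+),\in,B\rangle\to\langle H(\kappa^+),\in\rangle$ such that $\Phi$ also holds in $\langle H(\rho^+),\in,B\rangle$. Since $j$ preserves ``largest cardinal'', $j(\rho)=\kappa$, and in the subcompact convention adopted by the paper $\crit j=\rho$. I would then verify, in the standard way, that $\mathcal{U}=\{X\in P(\rho):\rho\in j(X)\}$ is a non-principal $\rho$-complete ultrafilter on $\rho$: non-principality because $j\restriction\rho=\mathrm{id}$; the ultrafilter property because $j(X)\cup j(\rho\setminus X)=\kappa\ni\rho$; and $\rho$-completeness because $j$ commutes with intersections of length below $\crit j=\rho$. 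Since $\mathcal{U}\subseteq P(\rho)\subseteq H(\rho^+)$, the set $\mathcal{U}$ is a subset of $H(\rho^+)$ witnessing the failure of $\Phi$ in $\langle H(\rho^+),\in,B\rangle$, giving the desired contradiction.

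The main obstacle is ensuring $\crit j=\rho$: if the definition of $\kappa^+$-$\Pi^1_1$-subcompactness is read purely as stated it a priori only forces $\crit j\leq\rho$, and when $\crit j=\mu<\rho$ the derived measure lives on $\mu$ and fails to contradict $\Phi$ in $H(\rho^+)$, which speaks about the largest cardinal $\rho$. Fortunately, as with the standard subcompactness definition, the intended convention here is $\crit j=\rho$; and once one observes that $j$ must send the largest cardinal of $H(\rho^+)$ to the largest cardinal of $H(\kappa^+)$, pinning down the critical point this way is the natural refinement.
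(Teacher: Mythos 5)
Your proposal is correct and follows essentially the same route as the paper: reflect the $\Pi^1_1$ statement asserting that the largest cardinal carries no ($\delta$-complete, non-principal) ultrafilter, and then contradict it at $H(\rho^{+})$ via the derived measure $\{X\subseteq\rho \mid \rho\in j(X)\}$ from the reflecting embedding. Your extra care about non-principality and about the convention $\crit j=\rho$ (which the paper's own proof also uses implicitly) only tightens the same argument.
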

\begin{proof}
Let $\Phi$ be the $\Pi^1_1$-statement ``for every $\mathcal{U} \subseteq \power(\kappa)$ which is an ultrafilter, $\mathcal{U}$ is not $\kappa$-complete''. If $\kappa$ is not measurable, $\Phi$ holds. But for every $\rho < \kappa$ such that there is an elementary embedding $j\colon H(\rho^{+}) \to H(\kappa^{+})$ with critical point $\rho$, one can obtain a measure of $\rho$ by $\mathcal{U}_{\rho} = \{A \subseteq \rho \mid \rho\in j(A)\}$. So $\Phi$ fails at $H(\rho^{+})$. 
\end{proof}
\begin{lemma}\label{lemma: reflection at pi11 subcompact}
Let $\kappa$ be $\kappa^+$-$\Pi^1_1$-subcompact. Then every sequence of $<\kappa$ many stationary subsets of $S^{\kappa^{+}}_{<\kappa}$ has a common reflection point. 
\end{lemma}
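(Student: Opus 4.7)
The plan is to apply $\kappa^+$-$\Pi^1_1$-subcompactness to the joint stationarity of $\langle S_i : i < \delta\rangle$ and then locate a common reflection point inside the range of the resulting embedding. I would code the sequence as a predicate $A \subseteq H(\kappa^+)$, say $(i,\alpha)\in A$ iff $\alpha\in S_i$, together with a marker for $\delta$. The statement $\Phi$ that each $i$-th slice of $A$ is a stationary subset of $\kappa^+$ is $\Pi^1_1$ over $\langle H(\kappa^+), \in, A\rangle$, so the hypothesis produces an elementary embedding $j\colon \langle H(\rho^+), \in, B\rangle \to \langle H(\kappa^+), \in, A\rangle$ with $H(\rho^+)\models \Phi[B]$. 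A standard density argument (using that the set of critical points produced by $\kappa^+$-$\Pi^1_1$-subcompactness is unbounded in $\kappa$) lets me arrange $\rho > \delta$; then $j(i) = i$ for every $i < \delta$, the reflected sets $T_i := \{\xi < \rho^+ : (i,\xi)\in B\}$ are stationary in $\rho^+$ and contained in $S^{\rho^+}_{<\rho}$, and $\xi\in T_i$ implies $j(\xi)\in S_i$.

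The candidate common reflection point is $\alpha^* := \sup j\image \rho^+$. Since $|j\image \rho^+| = \rho^+$, we have $\alpha^* < \kappa^+$, and since $j\image \rho^+$ is an increasing cofinal sequence of order type $\rho^+$, the regularity of $\rho^+$ gives $\cf(\alpha^*) = \rho^+ > \omega$. Because $j\image T_i \subseteq S_i \cap \alpha^*$, it suffices to show that $j\image T_i$ is stationary in $\alpha^*$ for every $i < \delta$.

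Fix a club $D\subseteq\alpha^*$ and set $\widetilde{D} := D \cap \overline{j\image \rho^+} \cap S^{\alpha^*}_{<\rho}$, where $\overline{j\image \rho^+}$ is the closure of $j\image\rho^+$ in $\alpha^*$ (a club of $\alpha^*$). Since every club in $\alpha^*$ meets $S^{\alpha^*}_\omega$ cofinally, $\widetilde{D}$ is stationary in $\alpha^*$. The crux of the argument is the observation that every ordinal of cofinality $<\rho$ in $\overline{j\image\rho^+}$ already belongs to $j\image\rho^+$: if $\beta \in \overline{j\image\rho^+}\setminus j\image\rho^+$, then $\beta = \sup j\image \gamma$ for the least $\gamma < \rho^+$ with $j(\gamma)\geq\beta$, and continuity of $j$ at $\gamma$ (were $\cf\gamma<\rho$) would force $j(\gamma)=\beta$, contradicting $\beta\notin j\image\rho^+$; hence $\cf\gamma=\rho$ and $\cf\beta=\rho$. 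Consequently $\widetilde{D}\subseteq j\image\rho^+$, and $E := \{\xi < \rho^+ : j(\xi)\in\widetilde{D}\}$ is unbounded in $\rho^+$ and closed at every ordinal of cofinality $<\rho$ (combining continuity of $j$ with closedness of $D$). Picking $\xi \in T_i \cap \overline{E}$, which exists because $T_i$ is stationary and $\overline{E}$ is a club in $\rho^+$, the inclusion $T_i\subseteq S^{\rho^+}_{<\rho}$ places $\xi$ in $E$, so $j(\xi)\in D\cap j\image T_i\subseteq D\cap S_i\cap\alpha^*$.

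The main obstacle is verifying the structural claim that the new points of $\overline{j\image\rho^+}$ all have cofinality exactly $\rho$; this hinges on the case distinction $\cf\gamma<\rho$ versus $\cf\gamma=\rho$ for the index $\gamma$ singled out above and uses only the elementarity of $j$ together with $\rho=\crit j$. The auxiliary step of arranging $\rho>\delta$ is routine once $\delta$ is included among the parameters encoded in $A$.
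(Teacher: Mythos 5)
Your proposal is correct and follows essentially the same route as the paper: reflect the $\Pi^1_1$ assertion that all the given sets are stationary, arrange the critical point $\rho$ above the number of sets, and take $\sup j\image\rho^{+}$ as the common reflection point, using that the reflected sets concentrate on cofinalities below $\rho$ together with continuity of $j$ at such points. You merely spell out in detail the standard transfer-of-stationarity argument (via the closure of $j\image\rho^{+}$) that the paper leaves implicit.
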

\begin{proof}
Let $\mathcal{S}$ be a collection of stationary sets, $|\mathcal{S}| < \kappa$.  Let us reflect the $\Pi^1_1$-statement: ``$\forall C\subseteq \kappa^{+}$, which is closed and unbounded, for all $S\in \mathcal{S}$, $C \cap \mathcal{S} \neq \emptyset$''. 

Fix $\rho > |\mathcal{S}|$ such that there is an elementary embedding:
\[j\colon \langle H(\rho^{+}), \in, \tilde{\mathcal{S}}\rangle \to \langle
H(\kappa^{+}), \in, \mathcal{S} \rangle\]

Note that if $S\in\mathcal{S}$, and $\alpha \in S$ with $\cf(\alpha) = \eta$,
then $\eta < \rho$. The ordinal $\delta = \sup j\image \rho^{+}$ will be a reflection point of every member of $\mathcal{S}$. Indeed, for every $S\in\mathcal{S}$ there is a unique $\tilde{S}\in\tilde{\mathcal{S}}$ such that $j(\tilde{S}) = S$. Every $\tilde{S}\in\tilde{\mathcal{S}}$ is stationary at $\rho^{+}$ of cofinality $<\rho$. Therefore, $j\image\tilde{S} = S \cap j\image \rho^{+}$ is stationary at $\delta$.  
\end{proof}
Neeman and Steel showed that the consistency strength of simultaneous stationary
reflection at the successor of a Woodin cardinal (indeed, threadable successor of
a Woodin cardinal) is $\kappa^+$-$\Pi^1_1$-subcompact under some iterability assumptions.

By analogy with the case of consistency strength of various assertions about stationary reflection which are around the existence of a Mahlo cardinal, we expect the consistency strength of full stationary reflection at a subcompact cardinal to be strictly between a subcompact cardinal and a $\kappa^+$-$\Pi^1_1$-subcompact cardinal.

\section{Stationary Reflection at $\aleph_{\omega+1}$}\label{section: stationary reflection}

In this section, we will prove the main theorem of the paper which improves the
upper bound of the consistency strength of stationary reflection at the successor of a singular cardinal. The
proof splits into two components: the first component is a general statement
about preservation of some mildly indestructible reflection principles at
successor of a measurable cardinal $\kappa$ under a forcing that changes the
cofinality of $\kappa$ to $\omega$ and shoots a club through the stationary set
$(S^{\kappa^{+}}_{\kappa})^V$.  The second is to show how to obtain the
hypothesis of the previous result from $\Pi_1^1$-subcompactness.  We formulate
the result this way, since it may be possible to use a weaker large cardinal
notion to obtain the hypothesis of the first step.  Further, we are able to use
the first step to give an application to stationary reflection for subsets of
some bounded cofinality.

The idea to use Prikry forcing in order to force a measurable to be
$\aleph_\omega$ while preserving its successor and maintaining stationary
reflection at almost all stationary subsets of $\aleph_{\omega + 1}$ appears in
several places. In the paper of Cummings, Foreman and Magidor \cite[Section
11]{CummingsForemanMagidor2001}, they show that if $\kappa$ is
$\kappa^{+}$-supercompact then after forcing with Prikry forcing stationary
reflection holds outside the set of ordinals of ground model cofinality
$\kappa$. This was later extended by Faubion in \cite{Faubion2012} to force from
$\kappa$ which is $\kappa^{+}$-supercompact $\Refl(\aleph_{\omega+1}\setminus
S_0)$ where $S_0$ is the collection of ordinals with ground model cofinality
$\kappa$.  Clearly to obtain full stationary reflection in such a model, we must
destroy the stationarity of $S_0$ and any other nonreflecting stationary sets
which may appear in the extension.

We will state and prove the main theorem for simultaneous reflection of finitely many 
stationary sets. The proof adjusts easily to the case of stationary reflection of single sets.

\begin{theorem}\label{theorem: preserving stationary reflection}
Assume $\GCH$. Let $\kappa$ be a measurable cardinal and let $S \subseteq S^{\kappa^{+}}_{<\kappa}$ be stationary. Let us assume that for every $\omega \leq \theta < \mu < \kappa$ regular cardinals, $S^\mu_\theta \in I[\mu]$.

Let us assume further that $\Add(\kappa^{+},1)$ forces that every finite
sequence of stationary subsets of $S$ reflects simultaneously at ordinals of
unbounded cofinalities below $\kappa$. Then, there is a generic extension in
which $\kappa = \aleph_{\omega}$, $\kappa^{+} = \aleph_{\omega + 1}$, the ground
model $S^{\kappa^{+}}_{\kappa}$ is nonstationary and simultaneous reflection
holds for finite sequences of stationary subsets of $S$.
\end{theorem}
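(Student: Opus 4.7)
I propose to force with $\mathbb{P} * \dot{\mathbb{C}}$, where $\mathbb{P}$ is the Prikry forcing with interleaved collapses from Section~\ref{section: Prikry}, and $\dot{\mathbb{C}}$ names, in $V[G]$, the forcing that shoots a club through $\kappa^{+} \setminus (S^{\kappa^{+}}_\kappa)^V$ by bounded, closed conditions. The basic structural properties follow readily: $\mathbb{P}$ turns $\kappa$ into $\aleph_\omega^{V[G]}$ while preserving $\kappa^{+}$; $\mathbb{C}$ is $\kappa^{+}$-distributive in $V[G]$ by a standard approachability argument, which is where the hypothesis $S^\mu_\theta \in I[\mu]$ enters; and $(S^{\kappa^{+}}_\kappa)^V$ is nonstationary in $V[G * H]$ by construction.

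The main content is preservation of simultaneous reflection for subsets of $S$. Fix $\mathbb{P} * \dot{\mathbb{C}}$-names $\dot{T}_1, \ldots, \dot{T}_m$ for stationary subsets of $S$, forced stationary by some condition $(p, \dot{c})$. The plan is to lift $j_\omega \colon V \to M_\omega$ to $V[G * H]$: for the Prikry piece, Lemma~\ref{lemma: prikry generic over Momega} produces, in a further $V$-generic extension, an $M_\omega$-generic filter $G^*$ for $j_\omega(\mathbb{P})$ with $j_\omega(p) \in G^*$; for the club-shooting piece, Theorem~\ref{theorem: cal H is generic} applied with $\mathbb{A} = \mathbb{C}$ turns $P$ and $\mathcal{H} = \langle {<}j_{n,\omega}\image H{>} \mid n < \omega\rangle$ into an $M_\omega$-generic filter $H^*$ for $j_\omega(\mathbb{C})$. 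Combining, one obtains a lift $j_\omega^{+} \colon V[G * H] \to M_\omega[G^* * H^*]$.

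The reflection step then happens inside $M_\omega[G^* * H^*]$. By elementarity applied to the hypothesis, $M_\omega$ satisfies that $\Add(j_\omega(\kappa^{+}), 1)$ forces simultaneous reflection at unbounded cofinalities for finite sequences of stationary subsets of $j_\omega(S)$. Using the intersection representation $M_\omega[P][\mathcal{H}] = \bigcap_n M_n[\mathcal{H}_n]$ from Lemma~\ref{lemma: Momega H is intersection}, together with the mutual genericity of $H^*$ and the collapse parts of $G^*$, I plan to exhibit $M_\omega[G^* * H^*]$ as a further extension of some $\Add(j_\omega(\kappa^{+}), 1)$-generic extension of $M_\omega$, so that the $j_\omega^{+}(\dot{T}_i)$ become visible there; the hypothesis then yields a common reflection point $\alpha < j_\omega(\kappa^{+})$ of suitable cofinality, and elementarity of $j_\omega^{+}$ transfers the reflection to a point below $\kappa^{+}$ in $V[G * H]$. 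The main obstacle I anticipate is precisely this last step: the hypothesis is formulated for $\Add(\kappa^{+}, 1)$-extensions, whereas $M_\omega[G^* * H^*]$ is a highly nontrivial intersection extension of $M_\omega$; the $\mathbb{H}$ technology in Subsection~\ref{subsection: curly H} is designed to provide the flexibility needed to bridge this gap, but will require careful tracking of names and mutual genericity across the $M_n$'s, and the ``unbounded cofinalities'' clause in the hypothesis is what ensures the reflection point lands at the correct cofinality in the final model.
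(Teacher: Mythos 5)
There is a genuine gap, and it sits exactly where you placed your two load-bearing steps. First, the lift $j_\omega^{+}\colon V[G*H]\to M_\omega[G^**H^*]$ you propose cannot be constructed. To lift $j_\omega$ onto $V[G]$ you would need an $M_\omega$-generic filter for $j_\omega(\mathbb{P})$ containing $j_\omega\image G$; since the stems of conditions in $G$ are not moved past the Prikry points, such a filter would have to have the $V$-generic Prikry sequence $\langle\rho_n\mid n<\omega\rangle$ as its Prikry sequence, and this sequence is bounded by $\kappa<j_\omega(\kappa)$, so it fails the genericity criterion of Theorem~\ref{theorem: prikry generic from collapses and sequence} over $M_\omega$ (it never enters the measure-one set of ordinals above $\kappa$). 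Lemma~\ref{lemma: prikry generic over Momega} does not help here: it produces a generic containing $j_\omega(p)$ for a \emph{single} condition $p$, and it is built from a $V$-generic for the finite product of collapses $\mathbb{P}_n\restriction p$, not from $G$. Accordingly, the paper never lifts $j_\omega$ onto the final model; it proves simultaneous reflection inside $M_\omega[\tilde G][\mathcal{H}]$ (where $\tilde G$ has the critical sequence as its Prikry part) and then transfers by elementarity of $j_\omega$ as a statement about what $j_\omega(\mathbb{P})\ast j_\omega(\mathbb{H})/\dot P$ forces; consequently the $V$-forcing witnessing the theorem is $\mathbb{P}\ast\mathbb{H}/\dot P$, not your $\mathbb{P}\ast\dot{\mathbb{C}}$ with $\dot{\mathbb{C}}$ computed in the full extension $V[G]$.

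Second, the step you yourself flag as ``the main obstacle'' is the actual content of the proof, and your setup blocks the paper's resolution of it. You apply the machinery of Subsection~\ref{subsection: curly H} with $\mathbb{A}=\mathbb{C}\in V[G]$, but that machinery requires $\mathbb{A}$ to satisfy $j_n(\mathbb{A})=\mathbb{A}$, to have a $\kappa^{+}$-closed dense subset, and to admit a $V$-generic filter $H$ (Theorem~\ref{theorem: cal H is generic}); a club-shooting poset defined in $V[G]$ meets none of these. The paper instead defines $\mathbb{Q}$ in $V[\dot P]$ (from the Prikry sequence alone, without the collapses), shows that $\mathbb{A}=j_\omega(\mathbb{Q})^P$ contains a dense subtree isomorphic to $\Add(\kappa^{+},1)^V$, and takes $H$ to be $V$-generic for it. This identification is precisely how the $\Add(\kappa^{+},1)$-indestructibility hypothesis enters: each $\mathcal{H}_n$ is $M_n$-generic for something equivalent to $\Add(j_n(\kappa^{+}),n+1)$, so the $j_n$-image of the hypothesis gives reflection in $M_n[\mathcal{H}_n]$; stationary sets in $M_\omega[\tilde G][\mathcal{H}]$ are pulled back to the $M_n$ side via the chain condition, reflected there, and the reflection is pushed back up using the approachability assumption $S^\mu_\theta\in I[\mu]$ (this, not distributivity of the club forcing, is where approachability is used), with a diagonal-club argument in $M_\omega[P][\mathcal{H}]$ handling the case where the pullbacks are nonstationary. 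None of this is recoverable from your configuration, so the proposal as it stands does not go through.
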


By the assumption of $\GCH$, the approachability requirement is not satisfied trivially only at successors of singular cardinals.
\begin{proof}
In $V$, let us fix a normal ultrafilter on $\kappa$, $\mathcal{U}$. Let $j:V \to
M$ be the ultrapower embedding given by $\mathcal{U}$.  Let $\mathbb{P}$ be the
Prikry forcing with interleaved collapses using a guiding generic $K$ as defined
in Section \ref{section: Prikry}.

Let $\dot{P}$ be the canonical name of the generic Prikry sequence added by $\mathbb{P}$ (so $\dot{P}$ does not include the generic filters for the collapses). Let $\mathbb{Q}$ be the forcing notion for adding a club to $(S^{\kappa^{+}}_{<\kappa})^V$ in  $V[\dot{P}]$. Note that $\mathbb{Q}$ is defined in a submodel of the generic extension of $V$ by $\mathbb{P}$.

Let us start by analyzing $j_{\omega}(\mathbb{Q})$.
\begin{claim}
$\cf^V j_{\omega}(\kappa^{+}) = \kappa^{+}$. Moreover, in $V$ there is a closed unbounded set $D \subseteq j_\omega(\kappa^{+})$ such that $D \subseteq (S^{j_\omega(\kappa^{+})}_{\leq \kappa})^{M_{\omega}}$. 
\end{claim}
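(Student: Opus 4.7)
The plan is to prove both assertions via a descent across $V$, $M_\omega[P]$, and $M_\omega$, using Lemma~\ref{lemma: MomegaP is closed} together with the distributivity of standard Prikry forcing to transfer short cofinal sequences from $V$ all the way into $M_\omega$.

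For the cofinality, I will first establish $\cf^V j_\omega(\kappa^+) \leq \kappa^+$ by showing that $j_\omega\image \kappa^+$ is cofinal in $j_\omega(\kappa^+)$. Given $\gamma < j_\omega(\kappa^+)$, write $\gamma = j_{n,\omega}(\delta)$ for some $n < \omega$ and $\delta < j_n(\kappa^+)$ via the direct limit representation. Iterating the standard fact that the image of $\kappa^+$ under a normal ultrapower is cofinal in the image of $\kappa^+$, we have that $j_n\image \kappa^+$ is cofinal in $j_n(\kappa^+)$, so pick $\alpha < \kappa^+$ with $\delta < j_n(\alpha)$, giving $\gamma < j_\omega(\alpha)$. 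For the lower bound, I argue by contradiction: suppose a cofinal sequence of length $\mu \leq \kappa$ exists in $V$. By Lemma~\ref{lemma: MomegaP is closed}, this sequence lies in $M_\omega[P]$. The extension $M_\omega \subseteq M_\omega[P]$ is standard Prikry forcing at $j_\omega(\kappa)$ for the measure $j_\omega(\mathcal{U})$, which preserves $j_\omega(\kappa^+)$, so $j_\omega(\kappa^+)$ remains regular in $M_\omega[P]$; this contradicts the existence of a cofinal $\mu$-sequence with $\mu \leq \kappa < j_\omega(\kappa^+)$.

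To construct $D$, fix an arbitrary cofinal sequence $\langle \gamma_\xi \mid \xi < \kappa^+\rangle$ in $j_\omega(\kappa^+)$ in $V$, and recursively define $\beta_0 = \omega$, $\beta_{\xi+1} = \max(\gamma_\xi, \beta_\xi) + \omega$, and $\beta_\xi = \sup_{\eta<\xi}\beta_\eta$ at limit $\xi$. Since $\cf^V j_\omega(\kappa^+) = \kappa^+ > \omega$, each $\beta_\xi$ remains below $j_\omega(\kappa^+)$, and the sequence is strictly increasing, continuous, and cofinal. Every $\beta_\xi$ is a limit ordinal with $\cf^V \beta_\xi \leq \kappa$: at successor stages $\beta_{\xi+1}$ is of the form $\alpha + \omega$ so has $V$-cofinality $\omega$, and at limit stages $\cf^V \beta_\xi = \cf^V \xi \leq \kappa$. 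Let $D = \{\beta_\xi \mid \xi < \kappa^+\}$; this is club in $j_\omega(\kappa^+)$ in $V$. To upgrade $\cf^V \beta_\xi \leq \kappa$ to $\cf^{M_\omega} \beta_\xi \leq \kappa$, I take a cofinal $(\cf^V \beta_\xi)$-sequence in $V$; by Lemma~\ref{lemma: MomegaP is closed} it lies in $M_\omega[P]$, and since standard Prikry forcing at $j_\omega(\kappa)$ is ${<}j_\omega(\kappa)$-distributive for sequences of ordinals (combining the Prikry property with the $j_\omega(\kappa)$-closure of the direct extension order), the sequence is in $M_\omega$, so $\cf^{M_\omega} \beta_\xi \leq \kappa$. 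Thus $D \subseteq (S^{j_\omega(\kappa^+)}_{\leq \kappa})^{M_\omega}$.

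The main obstacle is keeping the three cofinalities in $V$, $M_\omega[P]$, and $M_\omega$ cleanly separated. The key leverage is the two-step descent: Lemma~\ref{lemma: MomegaP is closed} captures any $\leq\kappa$-sequence from $V$ inside $M_\omega[P]$, and the distributivity of the Prikry forcing that adjoins $P$ then delivers it into $M_\omega$ itself, converting the easy bound on $V$-cofinality into the desired bound on $M_\omega$-cofinality.
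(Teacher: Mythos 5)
Your first assertion is fine: the cofinality computation is correct (indeed it follows immediately from the fact that $j_\omega\image\kappa^+$ is a cofinal subset of $j_\omega(\kappa^+)$ of order type $\kappa^+$, so the detour through Mathias genericity of the critical sequence and the chain condition of Prikry forcing over $M_\omega$, while valid, is not needed). The second half, however, has a genuine gap. The step that fails is the claim that the standard Prikry forcing at $j_\omega(\kappa)$ adds no new sequences of ordinals of length ${<}j_\omega(\kappa)$, so that a cofinal $(\cf^V\beta_\xi)$-sequence lying in $M_\omega[P]$ must already lie in $M_\omega$. Prikry forcing adds no new \emph{bounded} subsets of $j_\omega(\kappa)$, but it certainly adds new short sequences of ordinals: the Prikry sequence $P$ itself is a new $\omega$-sequence cofinal in $j_\omega(\kappa)$, and likewise the forcing adds cofinal $\omega$-sequences to every ordinal whose $M_\omega$-cofinality is $j_\omega(\kappa)$ --- and there are many such ordinals below $j_\omega(\kappa^+)$. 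A cofinal sequence in $\beta_\xi$ is not coded by a bounded subset of $j_\omega(\kappa)$, so neither the Prikry property together with the $\leq^\star$-closure nor any distributivity argument transfers it from $M_\omega[P]$ into $M_\omega$. This is exactly the dangerous case: since your $D$ is built from an \emph{arbitrary} cofinal sequence in $V$, its limit points $\beta_\xi$ may well land on ordinals with $\cf^V\beta_\xi=\omega$ but $\cf^{M_\omega}\beta_\xi=j_\omega(\kappa)>\kappa$ (such ordinals are unbounded in $j_\omega(\kappa^+)$), in which case $D\not\subseteq (S^{j_\omega(\kappa^+)}_{\leq\kappa})^{M_\omega}$.

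The paper avoids this by choosing $D$ more carefully, namely $D=\acc\, j_\omega\image\kappa^+$. For $\delta\in D$ with $\cf^V\delta=\eta\leq\kappa$, one can pick the witnessing cofinal sequence of the special form $\langle j_\omega(\gamma_i)\mid i<\eta\rangle = j_\omega(\langle\gamma_i\mid i<\eta\rangle)\restriction\eta$, which is literally an element of $M_\omega$ (a restriction of a point in the range of $j_\omega$ to an ordinal below the critical point); hence $\cf^{M_\omega}\delta\leq\eta\leq\kappa$ with no forcing-theoretic transfer at all. If you replace your arbitrary club by this one (or otherwise arrange that each limit point of your club is approached by a sequence that $M_\omega$ can see), the rest of your write-up goes through.
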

\begin{proof}
The sequence $j_\omega\image \kappa^{+}$ is cofinal at $j_{\omega}(\kappa^{+})$. Indeed, let $\beta < j_\omega(\kappa^{+})$. Then there is a function $f\colon \kappa^n \to \kappa^{+}$ such that $\beta = j_{\omega}(f)(\kappa, j_1(\kappa), \dots, j_{n-1}(\kappa))$. Let $\gamma = \sup_{a\in \kappa^n} f(a) < \kappa^{+}$. Then $j_{\omega}(\gamma) > \beta$.

Let $D = \acc j_\omega \image \kappa^{+}$. For every $\delta \in D$, let us show
that $\cf^V\delta = \cf^{M_\omega} \delta$. Clearly, $\cf^V\delta \leq
\cf^{M_{\omega}}\delta$. Let us assume that $\cf^V\delta = \eta \leq \kappa$ and
let $\langle \gamma_i \mid i < \eta\rangle$ be a sequence of ordinals such that
$\sup_{i < \eta} j_\omega(\gamma_i) = \delta$. The sequence $\langle
j_{\omega}(\gamma_i) \mid i < \eta\rangle$ belongs to $M_{\omega}$ since it is
$j_\omega(\langle \gamma_i\mid i < \eta\rangle) \restriction \eta$. Therefore, $M_{\omega}$ computes the cofinality of $\delta$ correctly.
\end{proof} 

Let $p_\star \in \mathbb{P}$ and let $s_\star = \langle \rho_0, \dots, \rho_{n_\star - 1}\rangle$ be the Prikry part of the stem of $p_\star$.

Let $D$ be as in the conclusion of the claim. Let $P = s_\star ^\smallfrown \langle j_n(\kappa) \mid n < \omega\rangle$. By a theorem of Mathias, $P$ is an $M_\omega$-generic Prikry sequence. So one can think of $P$ as the realization of $j_\omega(\dot{P})$ using the generic filter over $M_{\omega}$ which is obtained from Lemma \ref{lemma: prikry generic over Momega}.

Since $|j_{\omega}(\mathbb{Q})^P|^V = \kappa^{+}$, in $V$ one can construct a tree of conditions in $j_{\omega}(\mathbb{Q})^P$ which is isomorphic to $(\kappa^{+})^{{<}\kappa^{+}}$. This is done by induction. Assume that for $\eta\in (\kappa^{+})^{{<}\kappa^{+}}$, $q_{\eta}$ is defined. Let $\langle r^\eta_\alpha \mid \alpha < \kappa^{+}\rangle$ enumerate all conditions in $j_{\omega}(\mathbb{Q})^P$ which are stronger than $q_{\eta}$. For each $\alpha < \kappa^{+}$, let $q_{\eta^\smallfrown \langle \alpha\rangle}$ be an extension of $r^\eta_\alpha$, such that $\max q_{\eta^\smallfrown \langle \alpha\rangle} \in D$.  If $\eta \in (\kappa^{+})^{{<}\kappa^{+}}$, and $\len \eta$ is a limit ordinal, we let $q_{\eta} = \{\delta_\eta\} \cup \bigcup_{\gamma < \len \eta} q_{\eta \restriction \gamma}$ where $\delta_{\eta} = \sup \{ \max q_{\eta \restriction \gamma} \mid \gamma < \len \eta\}$. Note that $\delta_{\eta}\in D$ since $D$ is club. Moreover, $q_{\eta} \in M_{\omega}[P]$, since this model is closed under $\kappa$-sequences from $V$.

Therefore, in $V$, there is a tree which is dense in $j_{\omega}(\mathbb{Q})^P$ and isomorphic to the forcing $\Add(\kappa^{+}, 1)^{V}$.  


Let us remark that if $P'$ is any other Prikry sequence such that $P'$ differs from $P$ by only finitely many ordinals, then the interpretation of $\mathbb{Q}$ is the same. In particular, for every $n$, $j_n(j_{\omega}(\mathbb{Q})^P) = j_{\omega}(\mathbb{Q})^P$.

Fix a $V$-generic for $\mathbb{P}_n \restriction p_\star$, $G'$, and let
$\tilde{G}$ be the $M_{\omega}$-generic filter for $j_{\omega}(\mathbb{P})$,
which is derived from it.  Let $H$ be a $V$-generic filter for
$j_{\omega}(\mathbb{Q})^P$ where $P$ derived from $\tilde{G}$.

We will apply the machinery of Subsection \ref{subsection: curly H} for
$\mathbb{A} = j_{\omega}(\mathbb{Q})^P$.  Let $\mathcal{H}$ be $\langle
{<}j_{n,\omega}\image H {>} \mid n < \omega\rangle$ and let $\mathcal{H}_n =
\langle {<} j_{m, n} \image H {>} \mid m \leq n\rangle$.  Recall that $P \ast
\mathcal{H}$ is generic for the forcing $j_{\omega}(\mathbb{H})$ over
$M_{\omega}$, and that $H \in M_\omega[P][\mathcal{H}]$ by Lemma \ref{lemma:
Momega H is intersection}.

\begin{claim}\label{claim: stationary reflection in Momega}
In $M_{\omega}[\tilde{G}][\mathcal{H}]$ every finite collection of stationary
subsets of $j_{\omega}(\kappa^{+})$ reflects at a common point.
\end{claim}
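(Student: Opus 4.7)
The plan is to transfer the reflection hypothesis from $V[H]$ to $M_\omega[\tilde{G}][\mathcal{H}]$ via a lifted embedding and then a preservation argument under the Prikry generic. First, the hypothesis applies in $V[H]$: since $j_\omega(\mathbb{Q})^P$ has a dense subset (in $V$) isomorphic to $\Add(\kappa^{+},1)^V$, the filter $H$ is, up to isomorphism, a $V$-generic filter for $\Add(\kappa^{+},1)^V$, and the hypothesis of the theorem gives that $V[H]$ satisfies simultaneous reflection for finite sequences of stationary subsets of $S$ at ordinals of unbounded cofinalities below $\kappa$.

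Next, I would lift $j_\omega$ to an elementary embedding $j_\omega^{\star}\colon V[H]\to M_\omega[\mathcal{H}]$. By Claim \ref{claim: Hn is generic}, for each $n<\omega$ the filter $\mathcal{H}_n$ is $M_n$-generic for $\mathbb{A}^{n+1}$, where $\mathbb{A}=j_\omega(\mathbb{Q})^P$. Using the invariance $j_{n,n+1}(\mathbb{A})=\mathbb{A}$, Lemma \ref{lemma: cummings-woodin} lifts each $j_{n,n+1}$ to a definable embedding $M_n[\mathcal{H}_n]\to M_{n+1}[\mathcal{H}_{n+1}\restriction[1,n+1]]$; composing along the tower and passing to the direct limit produces $j_\omega^{\star}$ targeting $M_\omega[\mathcal{H}]$. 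Elementarity of $j_\omega^{\star}$ applied to the reflection statement in $V[H]$ then yields simultaneous reflection in $M_\omega[\mathcal{H}]$ for finite sequences of stationary subsets of $j_\omega(S)\subseteq j_\omega(\kappa^{+})$, with reflection points of unbounded cofinalities below $j_\omega(\kappa)$.

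Finally, I would pass from $M_\omega[\mathcal{H}]$ to $M_\omega[\tilde{G}][\mathcal{H}]$. By the mutual genericity proved at the end of Subsection \ref{subsection: curly H}, the larger model is a generic extension of $M_\omega[\mathcal{H}]$ by a quotient of $j_\omega(\mathbb{P})$. Since $j_\omega(\mathbb{P})$ satisfies the Prikry Property and is $j_\omega(\kappa^{+})$-cc (its conditions form a $j_\omega(\kappa)$-centered family), stationary subsets of $j_\omega(\kappa^{+})$ already present in $M_\omega[\mathcal{H}]$ remain stationary, and any reflection point of the requisite cofinality persists. For stationary sets born only in the larger model, I would appeal to the Strong Prikry Property (Lemma \ref{lemma: strong prikry property}) to reduce a nice name to a stabilized one over $M_\omega[\mathcal{H}]$, and then apply the already-established reflection there. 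The hardest part will be precisely this last step: controlling stationary sets introduced by $\tilde{G}$, since the interleaved collapses dramatically change cofinalities below $j_\omega(\kappa)$ and may produce stationary sets concentrated on the new cofinalities $\rho_n^{+}$, so one has to argue carefully that such sets trace back to stationary sets over which the lifted hypothesis supplies a common reflection point.
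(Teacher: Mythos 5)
Your opening steps are fine and in fact mirror the paper's own machinery: identifying $H$ with an $\Add(\kappa^{+},1)^V$-generic via the dense subtree, and lifting the finite-stage embeddings $j_{n,n+1}$ over $M_n[\mathcal{H}_n]$ by Lemma \ref{lemma: cummings-woodin} is exactly the content of Claim \ref{claim: Hn is generic}. But the paper does not lift $j_\omega$ to $V[H]$ and push reflection forward; it pulls the given stationary sets back. The reason is the step you yourself flag as hardest, and your sketch of it has a genuine gap. First, the sets $S_i$ live in $M_\omega[\tilde{G}][\mathcal{H}]$ and in general do not belong to your lifted model (which sits inside $M_\omega[P][\mathcal{H}]$, while $\tilde{G}\notin M_\omega[P][\mathcal{H}]$), so elementarity of the lifted $j_\omega^\star$ says nothing about them; the paper handles this by pulling back to $T_n^i\in M_n[\tilde{G}\restriction(n_\star+n+1)][\mathcal{H}_n]$, and, in the case where for every $n$ some pullback is nonstationary, by intersecting the countably many clubs $j_{n,\omega}(\dot{C}_n)$ (using the $\omega$-closure of $M_\omega[P][\mathcal{H}]$ and the fact $H\in M_\omega[P][\mathcal{H}]$ from Lemma \ref{lemma: Momega H is intersection}) to contradict the stationarity of some $S_i$. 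Your ``reduce a nice name to a stabilized one via the Strong Prikry Property'' is not a substitute for this argument.

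Second, your preservation claim is false as stated: the fact that $j_\omega(\mathbb{P})/P$ is $j_\omega(\kappa)$-centered (hence $j_\omega(\kappa^{+})$-cc) and has the Prikry property only preserves stationary subsets of $j_\omega(\kappa^{+})$ itself; a \emph{reflection point} is a statement about stationarity inside some $\delta<j_\omega(\kappa^{+})$ of small cofinality, and the interleaved collapses can perfectly well destroy stationary subsets of such $\delta$ (they are far from $\cf(\delta)$-cc). This is precisely why the theorem carries the hypothesis $S^\mu_\theta\in I[\mu]$: in the paper one first fixes the cofinality $\theta_i$ of each $S_i$ below some $\kappa_m$, finds in $M_n[\mathcal{H}_n]$ a common reflection point $\delta$ with $\kappa_{n-1}\le\cf\delta<\kappa_n$, factors the collapse quotient as a $\cf\delta$-cc forcing times a $\theta^{+}$-closed forcing, and uses approachability to keep the traces $A^i\subseteq S^{\cf\delta}_{\theta_i}$ stationary in the final model, then pushes them up via $j_{n,\omega}\image A^i$ and downward absoluteness. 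Your proposal never invokes approachability (nor the preliminary reduction making the $S_i$ disjoint from the ground-model cofinality-$j_\omega(\kappa)$ points, which uses $H\in M_\omega[P][\mathcal{H}]$), and without these ingredients the transfer from the lifted model to $M_\omega[\tilde{G}][\mathcal{H}]$ does not go through.
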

\begin{proof}
For $i \leq k$, let $S_i \subseteq j_{\omega}(\kappa^{+})$ be stationary. Since $H \in M_{\omega}[P][\mathcal{H}]$, we may assume that
each $S_i$ is disjoint from the set of ordinals that has cofinality
$j_{\omega}(\kappa)$ in $M_{\omega}$. Without loss of generality, the cofinality
of the members of each $S_i$ is fixed to be some $\theta_i < \kappa_m$.

Work in $M_n[\tilde{G} \restriction (n_\star + n + 1)][\mathcal{H}_k]$, $n \geq
m$. In this model, one can construct $\tilde{G}$ and $\mathcal{H}$ as well as $M_{\omega}$. 

Let \[T_n^i = \{\alpha < j_n(\kappa^{+}) \mid j_{n,\omega}(\alpha) \in \dot{S_i}^{\tilde{G} \ast \mathcal{H}}\}.\]

If the sequence $T_n^i$ for $i \leq k$ are all stationary in $j_n(\kappa^{+})$
then, since the forcing that introduces $\tilde{G}\restriction (n_\star + n + 1)$
has cardinality $j_n(\kappa)$ in $M_n[\mathcal{H}_n]$, one can find stationary
subsets of $T_n^i$, $T^i$, in $M_n[\mathcal{H}_n]$. Since $\mathcal{H}_k$ is
equivalent to a generic filter for $\Add(j_n(\kappa^+), n + 1)$ over $M_n$,
simultaneous stationary reflection holds in this model. In particular, the
sequence $T^i$ for $i \leq k$ reflects at common ordinals of arbitrary large
cofinalities. Let $\delta$ be a reflection point of $T$ such that $\kappa_n >
\cf \delta \geq \kappa_{n-1}$.  Recall that for all $i \leq k$, $T^i$ consists
of ordinals of cofinality $\theta_i$ which is less than $\kappa_m$ (in
particular less than $\kappa_{n-1}$).  Let $\{\beta_i \mid i < \cf \delta\}$ be
a continuous and increasing sequence of ordinals, cofinal at $\delta$. Let $A^i
= \{\gamma < \cf \delta \mid \beta_\gamma \in T^i\}$. By the assumption, $A^i$
is stationary in $S^{\cf \delta}_{\theta}$.

The forcing that introduces $\tilde{G} \restriction (n_\star + n + 1)$, splits
into a product of $\cf \delta$-cc forcing and $\theta^{+}$-closed forcing.
Using the approachability assumption $\cf \delta \in I[\cf \delta]$, we conclude
that each $A^i$ is stationary in $M_n[\tilde{G} \restriction (n_\star + n + 1)]$
and in particular in $M_n[\tilde{G} \restriction (n_\star + n +
1)][\mathcal{H}_n]$. The set $j_{k,\omega}(A^i) = j_{k,\omega}\image A^i$ belongs to $M_{\omega}$ and by downwards absoluteness from $M_n[\tilde{G} \restriction (n_\star + n +
1)][\mathcal{H}_n]$, it is stationary in $M_{\omega}[\tilde{G}][\mathcal{H}]$. 

Thus, if each $T_n^i$ is stationary then there is a condition that forces that
the sequence of $S_i$ reflect at a common point. Therefore, we conclude that at
least one of the  $T_n^i$ is non-stationary. Let $C_n$ be a club in
$M_n[\tilde{G}\restriction (n_{\star} + n + 1)][\mathcal{H}_n]$ disjoint from
$T_n^{i_n}$ for the relevant $i_n \leq k$.  By the chain condition of the
forcing that introduces $\tilde{G}\restriction (n_{\star} + n + 1)$, we may
assume that $C_n\in M_n[\mathcal{H}_n]$. Let $\dot{C}_n$ be a name for the club
$C_n$.

Let us consider $C = \bigcap_{n > m}
j_{n,\omega}(\dot{C}_n)^{{<}j_{n,\omega}\image \mathcal{H}_n{>}}$.  We claim
that $C \in M_{\omega}[P][\mathcal{H}]$. Indeed, for each $n$ the filter
${<}j_{n,\omega}\image \mathcal{H}_k{>}$ is simply an initial segment of
$\mathcal{H}$, $j_{n,\omega}(\dot{C}_n)$ is a member of $M_{\omega}$ and
$M_{\omega}[P][\mathcal{H}]$ is closed under $\omega$-sequences.

Let $i \leq k$ be such that $i_n = i$ for infinitely many $n \geq m$.  Let us
show that $C$ is disjoint from $S_i$. Indeed, if $\alpha \in C \cap S_i$ then
$\alpha = j_{n,\omega}(\alpha')$ for some $n < \omega$. Without loss of
generality, we can take $n$ to be such that $i_n=i$. Then $\alpha'\in C_n$ and
in $T_n^i$, a contradiction to the choice of $C_n$.
\end{proof}

By elementarity, we conclude that when forcing over $V$ with $\mathbb{P} \ast
\mathbb{H}/\dot{P}$ simultaneous stationary reflection holds at $\kappa^{+}$ for
finite collections.  \end{proof}

The proof shows that the forcing $\mathbb{P} \ast \mathbb{H}/\dot{P}$ preserves the stationarity of subsets of $\kappa^+$. In particular, the conclusion is never vacuous, as the set $S$ for which $\Refl(S)$ holds is stationary in the generic extension.

In order to use Theorem \ref{theorem: preserving stationary reflection}, we need to show that some indestructibility can be achieved at the level of subcompact cardinals.

\begin{lemma}\label{lemma: indestructible pi11 subcompact}
Let $\kappa$ be $\kappa^+$-$\Pi^1_1$-subcompact. There is a generic extension in which $\GCH$ holds, $\kappa$ is $\kappa^+$-$\Pi^1_1$-subcompact and this property is indestructible under the forcing $\Add(\kappa^{+},1)$.
\end{lemma}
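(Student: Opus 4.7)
My plan is to adapt a standard Laver-style preparation to the $\kappa^+$-$\Pi^1_1$-subcompact setting. By a preliminary auxiliary Easton iteration that preserves the large-cardinal property, I first reduce to the case $V\models\GCH$. Then I define $\mathbb{P}$ to be the reverse Easton iteration of length $\kappa$ whose stage at each inaccessible $\rho<\kappa$ is $\Add(\rho^{+},1)$ (and trivial otherwise), and let $G$ be $V$-generic for $\mathbb{P}$; routinely, $\mathbb{P}$ preserves cardinals and $\GCH$. To treat the lemma together with its indestructibility claim in one step, I let $H$ be $V[G]$-generic for $\Add(\kappa^{+},1)^{V[G]}$ (possibly trivial) and verify $\kappa^+$-$\Pi^1_1$-subcompactness of $\kappa$ in $V[G\ast H]$.

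Given $A\subseteq H(\kappa^{+})^{V[G\ast H]}$ and a $\Pi^1_1$ statement $\Phi$ true of $A$, I fix a nice $\mathbb{P}\ast\dot{\Add}(\kappa^{+},1)$-name $\dot{A}$ for $A$ (coded as a subset of $H(\kappa^{+})^V$) and a condition $p_0\in G\ast H$ forcing $\Phi(\dot{A})$. By standard forcing-definability, the assertion ``$p_0\Vdash\Phi(\dot{A})$'' is itself $\Pi^1_1$ over the structure $(H(\kappa^{+})^V, \in, p_0, \dot{A}, \mathbb{P}\ast\dot{\Add}(\kappa^{+},1))$. Applying $\kappa^+$-$\Pi^1_1$-subcompactness in $V$ yields $\rho<\kappa$ and an elementary embedding $j\colon(H(\rho^{+}),\bar{p}_0,\bar{\dot{A}},\bar{\mathbb{Q}})\to(H(\kappa^{+}),p_0,\dot{A},\mathbb{P}\ast\dot{\Add}(\kappa^{+},1))$ with $\crit j=\rho$ and $j(\rho)=\kappa$, satisfying $\bar{p}_0\Vdash_{\bar{\mathbb{Q}}}\Phi(\bar{\dot{A}})$ in $V$; by elementarity $\bar{\mathbb{Q}}=\mathbb{P}\restriction\rho\ast\dot{\Add}(\rho^{+},1)$.

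It then remains to lift $j$ to an embedding $j^{*}\in V[G\ast H]$. Since $j$ fixes $V_\rho$, the lift through $\mathbb{P}\restriction\rho$ is canonical using $G\restriction\rho$. For the stage-$\rho$ forcing $\Add(\rho^{+},1)^{V[G\restriction\rho]}$, writing $H_\rho$ for the stage-$\rho$ generic in $G$, I consider the master condition $m:=\bigcup j\image H_\rho$ in $\Add(\kappa^{+},1)^{V[G]}$, of size $\rho^{+}$. By the homogeneity of $\Add(\kappa^{+},1)$, genericity produces a condition $r\in H$ of size $\rho^{+}$ (obtained as a restriction of some $q\in H$ with $|\dom q|\geq\rho^{+}$), and one chooses an automorphism $\sigma\in V[G]$ of $\Add(\kappa^{+},1)^{V[G]}$, given by a coordinate permutation and bit-flip, that maps $r$ to $m$. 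Then $H^{*}:=\sigma\image H\in V[G\ast H]$ is $V[G]$-generic, contains $m$, and satisfies $V[G][H^{*}]=V[G][H]$. Using $H^{*}$ on the big side produces a lift $j^{*}\colon V[G\restriction\rho][H_\rho]\to V[G\ast H]$; by $\rho^{+}$-closure of $\Add(\rho^{+},1)^{V[G\restriction\rho]}$, $H(\rho^{+})^{V[G\restriction\rho][H_\rho]}=H(\rho^{+})^{V[G\ast H]}$, so $j^{*}$ restricts to an embedding of $H(\rho^{+})^{V[G\ast H]}$ into $H(\kappa^{+})^{V[G\ast H]}$. Setting $B:=\bar{\dot A}^{G\restriction\rho\ast H_\rho}$ and invoking the reflected forcing relation yields $\Phi(B)$ in the small structure. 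The most delicate point is verifying elementarity of $j^{*}$ with respect to the predicates $A$ and $B$: this requires the name-interpretations $\dot A^{G\ast H}$ and $\dot A^{G\ast H^{*}}$ to agree, which is arranged by selecting $\sigma$ (and refining $\rho$ if needed) so that the essential support of $\dot A$ is respected.
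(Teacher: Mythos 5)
Your first half tracks the paper's proof closely: the $\GCH$ preparation, the Easton iteration of $\Add(\alpha^{+},1)$, expressing ``$p_0\Vdash\Phi(\dot A)$'' as a $\Pi^1_1$ statement over $\langle H(\kappa^{+}),\in,\dot A,\ldots\rangle$ and reflecting it, and the master condition $m=\bigcup j\image H_\rho$. The divergence --- and the genuine gap --- is in how you arrange for $m$ to lie in a generic filter. You fix the actual generic $H$ and conjugate it by an automorphism $\sigma$ of $\Add(\kappa^{+},1)$ with $\sigma(r)=m$, setting $H^{*}=\sigma\image H$. The lift you then build is elementary with respect to the predicate $\dot A^{G\ast H^{*}}$, not with respect to $A=\dot A^{G\ast H}$, and these interpretations differ in general: $\dot A$ is an arbitrary name for a subset of $H(\kappa^{+})$ and may depend on the entire Cohen generic (it may, for instance, name $\bigcup H$ itself), so no automorphism that moves a condition of $H$ onto $m$ --- necessarily nontrivial on a set of size $\rho^{+}$, since generically $m\notin H$ --- can preserve its interpretation. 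There is no ``essential support'' of $\dot A$ to respect, and refining $\rho$ does not change this; the delicate point you flag is exactly where the argument breaks. A second, related problem: you use the actual restriction $G\restriction(\rho+1)$ as the small generic, but nothing guarantees that $\bar p_0=j^{-1}(p_0)$ belongs to it, so even granting the lift you cannot conclude that $\Phi$ holds of $B=\bar{\dot A}^{G\restriction(\rho+1)}$ in the bottom structure, which the definition of $\kappa^{+}$-$\Pi^1_1$-subcompactness requires.

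The paper avoids both issues by not fixing the generic in advance: it reflects with the given condition $p$ as a parameter (so $p=j(q)$), chooses a small generic $\tilde G$ containing $q$, forms the master condition from the $j$-images of the last coordinates of $\tilde G$, and then takes a big generic containing $p$, the relevant part of $\tilde G$, and $m$. This shows that the conditions forcing the existence of the required embedding (for the given $\dot A$ and $\Phi$) are dense, hence every extension contains one, with the predicates and the truth of $\Phi$ at the bottom matching automatically because the small generic was chosen to contain $\bar p_0$ and the big generic interprets the very name $\dot A$ one started with. Note also that the paper's iteration includes the stage $\Add(\kappa^{+},1)$ at $\kappa$ itself, which is precisely what provides room for the master condition; in your ``possibly trivial $H$'' case there is no $\Add(\kappa^{+},1)^{V[G]}$-generic available in the final model, so that branch of your argument cannot be run as written (nor is it needed: the final model of the lemma may be taken to include the top Cohen forcing). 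To repair your write-up, replace the automorphism-conjugation step by this density/master-condition argument.
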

\begin{proof}
Let us assume, by forcing if needed, that $\GCH$ holds in the ground model. Let $\mathbb{L}$ be the Easton support iteration of $\Add(\alpha^+, 1)$ for all inaccessible $\alpha \leq \kappa$. Let $V^{\mathbb{L}}$ be the generic extension. 

\begin{lemma}
In $V^{\mathbb{L}}$, $\kappa$ is $\kappa^+$-$\Pi^1_1$-subcompact. Moreover, this remains true after further forcing with $\Add(\kappa^{+},1)$.
\end{lemma}
\begin{proof}
Work in $V$. Let $\dot{A}$ be a name for a subset of $\kappa^{+}$. Let $\Phi$ be a $\Pi^1_1$-statement with parameter $\dot{A}$ which is true in the generic extension. Thus, the following $\Pi^1_1$-statement holds in the structure $\langle H(\kappa^+), \in, \mathbb{L}, \dot{A}, \Vdash_{\mathbb{L}}\rangle$: 
\[\forall \dot{X} \subseteq \mathbb{L}\times H(\kappa^{+}), \Vdash_{\mathbb{L}} \varphi(\dot{X}, \dot{A})\]
where $\varphi$ is a first order statement in the language of forcing. 

Since $\kappa$ is $\kappa^+$-$\Pi^1_1$-subcompact in $V$, we can find some cardinal $\rho < \kappa$, and $\tilde{\mathbb{L}}, \tilde{\dot{A}}$ such that there is an elementary embedding:
\[j\colon \langle H(\rho^{+}), \in, \tilde{\mathbb{L}}, \tilde{\dot{A}}, \Vdash_{\tilde{\mathbb{L}}}\rangle \to 
\langle H(\kappa^+), \in, \mathbb{L}, \dot{A}, \Vdash_{\mathbb{L}}\rangle.\] 
It is clear that $\tilde{\mathbb{L}} = \mathbb{L} \restriction \rho^{++}$. 

Let $p$ be a condition in $\mathbb{L}$. Without loss of generality, $p\in \im j$, and let $q\in\tilde{\mathbb{L}}$ such that $p = j(q)$. Let $\tilde{G}$ be a generic filter for $\tilde{\mathbb{L}}$ that contains $q$. Let $m = \bigcup_{r\in \tilde{G}} j(r(\rho))$, namely the union over the last coordinate of the $j$-image of all conditions in $\tilde{G}$. By the directed closure of the forcing $\Add(\kappa^{+},1)$, $m$ is a condition. Let $G$ be a generic that contains $\tilde{G} \restriction \rho^{++}$ and $m$. Note that $p\in G$. By Silver's criterion, $j$ extends to an elementary embedding between $H(\rho^{+})$ and $H(\kappa^{+})$ of the generic extension. Since we assumed that $q$ forces that $\Phi$ holds at $H(\rho^{+})$ and that $p$ forces that $\Phi$ holds at $H(\kappa^{+})$, the conclusion of the lemma follows.
\end{proof}

\end{proof}

\begin{theorem}
Simultaneous reflection for finite collections of stationary subsets of $\aleph_{\omega+1}$ is consistent relative to a cardinal $\kappa$ which is $\kappa^+$-$\Pi^1_1$-subcompact.
\end{theorem}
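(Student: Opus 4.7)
The strategy is to harness Theorem \ref{theorem: preserving stationary reflection} by supplying its hypotheses from $\kappa^{+}$-$\Pi^1_1$-subcompactness, using Lemma \ref{lemma: indestructible pi11 subcompact} as a bridge. To begin, I would force as in Lemma \ref{lemma: indestructible pi11 subcompact} to reach an intermediate model $V_{1}$ in which $\GCH$ holds, $\kappa$ is $\kappa^{+}$-$\Pi^1_1$-subcompact, and this property is preserved by $\Add(\kappa^{+},1)$. By Lemma \ref{lemma: pi11-subcompact is measurable}, $\kappa$ is measurable in $V_{1}$, and the approachability requirement $S^{\mu}_{\theta} \in I[\mu]$ for regular $\omega \leq \theta < \mu < \kappa$ is a standard consequence of $\GCH$ below $\kappa$ (trivial for inaccessible $\mu$ and by Shelah's theorems for $\mu$ a successor cardinal).

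The heart of the verification is the simultaneous reflection hypothesis of Theorem \ref{theorem: preserving stationary reflection}. In the generic extension of $V_{1}$ by $\Add(\kappa^{+},1)$, indestructibility preserves $\kappa^{+}$-$\Pi^1_1$-subcompactness, so Lemma \ref{lemma: reflection at pi11 subcompact} furnishes a common reflection point for any fewer-than-$\kappa$-many stationary subsets of $S^{\kappa^{+}}_{<\kappa}$, and in particular for any finite sequence. To force these reflection points to have unbounded cofinalities below $\kappa$, as required by Theorem \ref{theorem: preserving stationary reflection}, I would augment the parameter $A$ in each application of $\Pi^1_1$-subcompactness by a chosen ordinal $\eta < \kappa$; elementarity then forces the critical point $\rho$ of the resulting embedding $j\colon H(\rho^{+}) \to H(\kappa^{+})$ to exceed $\eta$, whence the reflection point $\delta = \sup j\image \rho^{+}$ has cofinality $\rho^{+} > \eta$.

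With the hypotheses in hand, I would apply Theorem \ref{theorem: preserving stationary reflection} in $V_{1}$ with $S = S^{\kappa^{+}}_{<\kappa}$. The resulting model $W$ satisfies $\kappa = \aleph_{\omega}$, $\kappa^{+} = \aleph_{\omega + 1}$, the ground-model set $(S^{\kappa^{+}}_{\kappa})^{V_{1}}$ is nonstationary in $W$, and finite simultaneous reflection holds in $W$ for stationary subsets of $S$. To upgrade this to finite simultaneous reflection of arbitrary stationary subsets of $\aleph_{\omega+1}$, I would observe that cofinalities can only drop from $V_{1}$ to $W$, so the set of ordinals below $\aleph_{\omega+1}$ with $W$-cofinality $\aleph_{\omega}$ is contained in $(S^{\kappa^{+}}_{\kappa})^{V_{1}}$ and is therefore nonstationary in $W$. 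Consequently every stationary subset of $\aleph_{\omega+1}$ agrees modulo a nonstationary set with a stationary subset of $S$, and finite simultaneous reflection transfers to arbitrary finite collections.

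The heavy lifting has already been done in Theorem \ref{theorem: preserving stationary reflection} and Lemma \ref{lemma: indestructible pi11 subcompact}; the only genuinely delicate point is the cofinality-unboundedness argument in the second paragraph, where one must arrange the critical point $\rho$ supplied by $\Pi^1_1$-subcompactness to exceed any prescribed $\eta < \kappa$ by a suitable choice of parameters.
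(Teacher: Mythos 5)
Your proposal follows essentially the same route as the paper: prepare via Lemma \ref{lemma: indestructible pi11 subcompact}, get measurability and simultaneous reflection from Lemmas \ref{lemma: pi11-subcompact is measurable} and \ref{lemma: reflection at pi11 subcompact}, arrange approachability below $\kappa$, and apply Theorem \ref{theorem: preserving stationary reflection} with $S = S^{\kappa^{+}}_{<\kappa}$. Your two elaborations are correct and in fact fill in points the paper leaves implicit: coding $\eta$ into the parameter (so that $\eta$ must lie in the range of the embedding, forcing $\crit j = \rho > \eta$) is the right way to get reflection points of unboundedly high cofinality below $\kappa$, and the final transfer from stationary subsets of $S$ to arbitrary stationary subsets of $\aleph_{\omega+1}$, using that $(S^{\kappa^{+}}_{\kappa})^{V_1}$ is nonstationary in the extension, is exactly what is needed to match the stated theorem.

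The one inaccurate point is your justification of the approachability hypothesis. It is not true that $\GCH$ alone gives $S^{\mu}_{\theta} \in I[\mu]$ for all regular $\theta < \mu < \kappa$: Shelah's theorem covers $\mu = \lambda^{+}$ with $\theta \neq \cf(\lambda)$, but when $\lambda < \kappa$ is singular and $\theta = \cf(\lambda)$ the membership $S^{\lambda^{+}}_{\theta} \in I[\lambda^{+}]$ can consistently fail even under $\GCH$; the paper itself flags successors of singulars as the only nontrivial case. This is not fatal to your argument, but it does require the ``standard arguments'' the paper alludes to, namely an additional (suitably distributive, subcompactness-preserving) preparation below $\kappa$ forcing approachability at successors of singular cardinals, rather than an appeal to $\GCH$ alone.
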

\begin{proof}
By Lemma \ref{lemma: pi11-subcompact is measurable} and Lemma \ref{lemma:
reflection at pi11 subcompact}, $\kappa$ is a measurable cardinal and every
collection of fewer than $\kappa$ many stationary subsets of
$S^{\kappa^{+}}_{<\kappa}$ reflects simultaneously at arbitrarily high cofinalities below $\kappa$. By Lemma \ref{lemma: indestructible pi11 subcompact}, this property of $\kappa$ can be forced to be indestructible under the forcing $\Add(\kappa^{+},1)$. By standard arguments, we may assume that $S^{\mu}_{\theta} \in I[\mu]$ for every $\theta < \mu < \kappa$ regular. Finally, by applying Theorem \ref{theorem: preserving stationary reflection} (with $S = S^{\kappa^{+}}_{<\kappa}$), the conclusion holds.
\end{proof}

\begin{remark}  In the model for the main theorem, there is a very good scale of
length $\kappa^+$ by Theorem 20 of \cite{CummingsForemanMagidor2001}.  By
Theorem 5 of \cite{CummingsForemanMagidor2001}, it
follows that simultaneous reflection for countable collections of stationary
sets fails in the final model in a strong way. \end{remark}

By using Theorem \ref{bounded reflection at subcompact}, if there is a measurable subcompact $\kappa$ and $\eta < \kappa$ then there is a generic extension in which every stationary subset of $\kappa^{+}$ of cofinality $<\eta$ reflects. In this model, we obtain that any stationary subset reflects at ordinals of arbitrary high cofinality. We may also assume that the approachability holds everywhere below $\kappa$. By the proof of Theorem \ref{bounded reflection at subcompact}, in this model, we obtain that stationary reflection for stationary subsets of $S^{\kappa^{+}}_{<\eta}$ is indestructible under the forcing $\Add(\kappa^{+},1)$. Thus, we conclude:
\begin{theorem}\label{theorem: bounded stationary reflection from measurable subcompact}
Let $\kappa$ be a measurable subcompact cardinal and let $n < \omega$. There is a generic extension in which every stationary subset of $S^{\aleph_{\omega+1}}_{\leq \aleph_n}$ reflects.
\end{theorem}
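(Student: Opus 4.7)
The plan is to combine Theorem \ref{bounded reflection at subcompact} with Theorem \ref{theorem: preserving stationary reflection}. By a standard preparation I first assume $\GCH$ together with $S^\mu_\theta \in I[\mu]$ for all regular $\omega \leq \theta < \mu < \kappa$, preserving the measurable subcompactness of $\kappa$. I pick a regular cardinal $\eta < \kappa$ and a Prikry condition $p_\star$ whose stem pins down the first several Prikry points below $\eta$, so that in the intended final extension $W$ both $\aleph_n^W < \eta$ and every $V_1$-regular cardinal that gets collapsed to size $\leq \aleph_n^W$ already lies below $\eta$.

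Next I apply Theorem \ref{bounded reflection at subcompact} to pass to a generic extension $V_1$ in which $\kappa$ remains measurable subcompact, $\GCH$ and the approachability hypothesis continue to hold below $\kappa$, and every stationary subset of $S^{\kappa^+}_{<\eta}$ reflects at ordinals of arbitrarily large cofinality below $\kappa$. The subcompactness of $\kappa$ in $V_1$ upgrades this to simultaneous reflection for any finite family: given stationary $T_1, \dots, T_m \subseteq S^{\kappa^+}_{<\eta}$, apply subcompactness to $\langle H(\kappa^+), \in, T_1, \dots, T_m\rangle$ to obtain an elementary embedding $j\colon H(\rho^+) \to H(\kappa^+)$ with $T_i = j(\tilde T_i)$; each $\tilde T_i$ is stationary in $\rho^+$, so $j\image \tilde T_i$ is stationary at $\sup j\image \rho^+$, whose cofinality is $\rho^+$ and can be made arbitrarily large. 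The preparatory Easton iteration of $\Add(\alpha^+,1)$ inside the proof of Theorem \ref{bounded reflection at subcompact} furthermore makes subcompactness of $\kappa$ indestructible under $\Add(\kappa^+,1)$, via the master-condition lifting exactly as in Lemma \ref{lemma: indestructible pi11 subcompact}, so the same simultaneous reflection persists in $V_1^{\Add(\kappa^+,1)}$.

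Finally, I apply Theorem \ref{theorem: preserving stationary reflection} in $V_1$ with $S = S^{\kappa^+}_{<\eta}$; all hypotheses are satisfied. The resulting model $W$ has $\kappa = \aleph_\omega$, $\kappa^+ = \aleph_{\omega+1}$, the set $(S^{\kappa^+}_\kappa)^{V_1}$ is nonstationary in $W$, and finite collections of stationary subsets of $S$ reflect simultaneously. For any stationary $T \subseteq S^{\aleph_{\omega+1}}_{\leq \aleph_n}$ in $W$, by the choice of $\eta$ each ordinal in $T$ has $V_1$-cofinality either below $\eta$ (in which case it lies in $S$) or equal to $\kappa$ (in which case it lies in the nonstationary set $(S^{\kappa^+}_\kappa)^{V_1}$), so $T \cap S$ is stationary and therefore reflects, which makes $T$ itself reflect. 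The main technical obstacle is the indestructibility of subcompactness under the composed preparation and $\Add(\kappa^+,1)$ forcing; this is handled by the direct adaptation of Lemma \ref{lemma: indestructible pi11 subcompact} from $\Pi^1_1$-subcompact to plain subcompact, which uses the same Silver criterion and the directed closure of $\Add(\kappa^+,1)$ to produce the required master condition.
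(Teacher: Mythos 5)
Your overall architecture matches the paper's: prepare for $\GCH$ and approachability, apply Theorem \ref{bounded reflection at subcompact} with a suitable $\eta<\kappa$, and then run Theorem \ref{theorem: preserving stationary reflection} with $S=S^{\kappa^+}_{<\eta}$ below a Prikry condition whose stem is long enough that every ordinal of $W$-cofinality $\leq\aleph_n$ has $V_1$-cofinality either $<\eta$ or $\kappa$; that final cofinality analysis is correct and usefully makes explicit something the paper leaves implicit. However, there is a genuine gap in how you verify the hypothesis of Theorem \ref{theorem: preserving stationary reflection}. You claim that plain subcompactness of $\kappa$ (in $V_1$, and after $\Add(\kappa^+,1)$ via an indestructibility argument) yields simultaneous reflection for finite families $T_1,\dots,T_m\subseteq S^{\kappa^+}_{<\eta}$, because ``each $\tilde T_i$ is stationary in $\rho^+$.'' This is unjustified: a club in $\kappa^+$ is not an element of $H(\kappa^+)$, so ``$T_i$ is stationary'' is a $\Pi^1_1$ statement over $\langle H(\kappa^+),\in,T_i\rangle$ and does not transfer downward along a subcompact embedding $j\colon \langle H(\rho^+),\in,\tilde T_i\rangle\to\langle H(\kappa^+),\in,T_i\rangle$; the preimage $\tilde T_i$ may well be nonstationary. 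Securing exactly this downward transfer is the content of $\kappa^+$-$\Pi^1_1$-subcompactness (Lemma \ref{lemma: reflection at pi11 subcompact}), and Theorem \ref{theorem: subcompact is not enough} shows that subcompactness alone is compatible with a nonreflecting stationary subset of $S^{\kappa^+}_{<\kappa}$, so no direct embedding argument of this kind can succeed. Since you also use this same implication to argue that reflection persists in $V_1^{\Add(\kappa^+,1)}$ (via indestructible subcompactness), both the simultaneity upgrade and the persistence step are broken as written.

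The repair is to follow the paper's route: the theorem only asserts reflection of single stationary sets, and the paper explicitly notes that Theorem \ref{theorem: preserving stationary reflection} adjusts to the single-set case, so no simultaneous reflection is needed at all. The required hypothesis is then that $\Add(\kappa^+,1)$ forces every stationary subset of $S^{\kappa^+}_{<\eta}$ to reflect at ordinals of unboundedly many cofinalities below $\kappa$, and this indestructibility is extracted from the \emph{proof} of Theorem \ref{bounded reflection at subcompact} (the club-shooting iteration together with the master-condition/lifting analysis of Lemma \ref{lemma: distributiveness}), not re-derived from subcompactness in the Cohen extension. With that substitution, and keeping your stem-fixing argument (with the stem long enough that $\aleph_n^W$ is at most the largest stem Prikry point, so that no $V_1$-regular in the interval up to the first generic Prikry point acquires $W$-cofinality $\leq\aleph_n$), the proof goes through as in the paper. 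Also note, as a minor point, that the Easton preparation inside Theorem \ref{bounded reflection at subcompact} uses $\Add(\rho^+,\rho^{++})$ at inaccessibles, not $\Add(\alpha^+,1)$; the latter is the preparation of Lemma \ref{lemma: indestructible pi11 subcompact}, which you do not actually need here.
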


It is interesting to compare Theorem \ref{theorem: bounded stationary reflection from measurable subcompact} to Zeman's Theorem on the upper bound for the consistency strength of the failure of $\square_{\aleph_\omega}$, \cite{Zeman2017}.

By Theorem \ref{theorem: preserving stationary reflection}, the consistency of stationary reflection at the successor of a singular cardinal is bounded from above by the consistency of mildly indestructible stationary reflection at the successor of a measurable cardinal. 
\begin{question}
Assume that $\Refl(S)$ holds for some stationary subset of $\aleph_{\omega + 1}$. Is there an inner model with a measurable subcompact cardinal?
\end{question}
\section{Acknowledgments}
We would like to thank Thomas Gilton for carefully reading this paper and providing many helpful comments. We would like to thank Moti Gitik and Menachem Magidor for many useful discussions.

\providecommand{\bysame}{\leavevmode\hbox to3em{\hrulefill}\thinspace}
\providecommand{\MR}{\relax\ifhmode\unskip\space\fi MR }
\providecommand{\MRhref}[2]{%
  \href{http://www.ams.org/mathscinet-getitem?mr=#1}{#2}
}
\providecommand{\href}[2]{#2}

\end{document}